\newcounter{cnt}
\def\mydggeometry{\makeatletter\dg@YGRID=1\dg@XGRID=20\unitlength=0.003pt\makeatother}
\numberwithin{equation}{section}
\newtheorem{theorem}{Theorem}[section]
\newtheorem{corollary}[theorem]{Corollary}
\newtheorem{lemma}[theorem]{Lemma}
\newtheorem{proposition}[theorem]{Proposition}
\newtheorem{example}[theorem]{Example}
\newtheorem{definition}[theorem]{Definition}
\newtheorem{remark}[theorem]{Remark}
\newtheorem*{thm}{Theorem}
\newcommand{\g}{\mathfrak{g}}
\newcommand{\n}{\mathfrak{n}}
\newcommand{\Z}{\mathbb{Z}}
\newcommand{\C}{\mathbb{C}}
\newcommand{\ra}{\rightarrow}
\newcommand{\ts}{\otimes}
\newcommand{\wt}{\widetilde}
\newcommand{\supp}{\operatorname*{supp}}
\newcommand{\id}{\operatorname*{id}}
\newcommand{\bc}{\mathbb{C}}
\newcommand{\talpha}{{\widetilde{\alpha}}}
\newcommand{\tbeta}{{\widetilde{\beta}}}
\newcommand{\tlambda}{{\widetilde{\lambda}}}
\newcommand{\ttau}{{\widetilde{\tau}}}
\newcommand{\tPhi}{\widetilde{\Phi}}
\newcommand{\tLambda}{\widetilde{\Lambda}}
\newcommand{\tDelta}{\widetilde{\Delta}}
\newcommand{\tW}{\widetilde{W}}
\newcommand{\tI}{\widetilde{I}}
\newcommand{\myleq}{\preccurlyeq}
\newcommand{\pair}[2]{\langle{#1},{#2}^{\vee}\rangle}
\newcommand{\meet}{\mathrel{\text{\raisebox{0.1ex}{\scalebox{0.8}{$\wedge$}}}}}
\newcommand{\join}{\mathrel{\text{\raisebox{0.1ex}{\scalebox{0.8}{$\vee$}}}}}
\DeclareMathOperator{\height}{ht}
\newcommand{\nnabla}{\nabla_n}
\newcommand{\typeA}{\mathsf{A}}
\newcommand{\SL}{\mathsf{SL}}
\title{Degenerate Schubert Varieties in Type $\typeA$}
\author{Rocco Chiriv\`\i}
\address{Dipartimento di Matematica e Fisica ``Ennio De Giorgi'', Universit\`a del Salento, Lecce, Italy}
\email{rocco.chirivi@unisalento.it}
\author{Xin Fang}
\address{Mathematisches Institut, Universit\"at zu K\"oln, 50931, Cologne, Germany}
\email{xinfang.math@gmail.com}
\author{Ghislain Fourier}
\address{Lehrstuhl B f\"ur Mathematik, RWTH Aachen University, 52056, Aachen, Germany}
\email{fourier@mathb.rwth-aachen.de}
\subjclass[2010]{17B10, 16S30, 14M15, 06A07}
\keywords{PBW filtration, Schubert varieties, Demazure modules}
\begin{document}

\begin{abstract}
We introduce rectangular elements in the symmetric group. In the framework of PBW degenerations, we show that in type $\mathsf{A}$ the degenerate Schubert variety associated with a rectangular element  is indeed a Schubert variety in a partial flag variety of the same type with larger rank. Moreover, the degenerate Demazure module associated with a rectangular element is isomorphic to the Demazure module for this particular Schubert variety of larger rank. This generalises previous results by Cerulli Irelli, Lanini and Littelmann for the PBW degenerate flag variety.
\end{abstract}

\maketitle

\section{Introduction}

\subsection{Motivation}

In the framework of PBW filtrations on simple Lie algebras and PBW degenerations of their cyclic modules, Feigin \cite{Fei} introduced the PBW degenerate flag variety $\mathcal{F}^a$. In type $\mathsf{A}$, these projective varieties can be defined by a variation of the containing relations in the definition of the complete flag variety. Similar to the complete flag varieties, the degenerate flag varieties can be realised either in \cite{Fei} using PBW degenerated modules as a highest weight orbit of a degenerated algebraic group, or in \cite{CFR} as a quiver Grassmannian associated with the type $\mathsf{A}$ equi-oriented quiver and a fixed dimension vector. 

It turned out, that these projective varieties have rich geometric and combinatorial structures. For example, they are normal locally complete intersections and an analogous of the Borel-Weil-Bott theorem holds \cite{FeFi}; their torus fixed points are counted by the median Genocchi numbers \cite{Fei2}, \textit{etc}. Moreover, a monomial basis for the homogeneous coordinate ring of $\mathcal{F}^a$, parametrised by lattice points in a normal polyhedron, has been provided in \cite{FFL}, again influencing a whole new chapter on toric degenerations of the flag variety and beyond. 

It is quite surprising, that Cerulli Irelli and Lanini \cite{CL}, later also together with Littelmann \cite{CLL}, recovered the PBW degenerate flag variety (resp. the degenerate module) as a Schubert variety (resp. a Demazure module) in a partial flag variety of double rank. This certainly implies all the aforementioned geometric and combinatorial properties. One has actually all the tools from Schubert calculus on hand to compute singularities, cohomologies \textit{etc}. 

The concept of PBW degenerations can be applied to spherical varieties in general: one partially abelianizes the corresponding Lie algebra and algebraic group (see \cite{FFL, Fei}) and considers its orbit in the associated graded module. A natural class of spherical varieties is that of Schubert varieties and a first step towards the analysis of PBW degenerate Schubert varieties has been carried out in \cite{Fou}, where for a triangular element in the Weyl group, a monomial basis of the homogeneous coordinate ring has been provided. From a combinatorial point of view, this basis can be identified with lattice points on a face of the previous polyhedron. Our main motivation for this paper is to figure out whether one can identify these degenerate Schubert varieties again as Schubert varieties in a partial flag variety of type $\mathsf{A}$. 

\subsection{Main result}

The following main result of this paper, as a combination of Theorem \ref{Thm:Schubert} and Proposition \ref{Prop:IsoGeneral}, answers this question for a large class of Weyl group elements which will be introduced after the statement.

\begin{thm}
Let $\tau \in \mathfrak{S}_{n+1}$ be a rectangular element and $\lambda$ be a dominant integral weight for $\mathfrak{sl}_{n+1}$.
\begin{enumerate}
\item The PBW degenerate Demazure module $V^{-,a}_\tau(\lambda)$ (see Section \ref{Subsec:Demazure} for the definition) is isomorphic to the Demazure module $V_{\widetilde{\tau}}(\widetilde{\lambda})$, where $\widetilde{\tau} \in \mathfrak{S}_{2n}$ and $\widetilde{\lambda}$ is a dominant integral weight for $\mathfrak{sl}_{2n}$.
\item The degenerate Schubert variety $X_w^a(\lambda)$ (again see Section \ref{Subsec:Demazure}) is isomorphic to a Schubert variety $X_{\widetilde{w}}(\widetilde{\lambda})$ in a partial flag variety of $\mathfrak{sl}_{2n}$.
\end{enumerate} 
\end{thm}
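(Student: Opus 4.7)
The plan is to extend the Cerulli Irelli-Lanini-Littelmann approach of \cite{CL,CLL} from the longest-element/full-flag setting to arbitrary rectangular elements. The argument naturally splits into the representation-theoretic statement (1), from which the geometric statement (2) then follows by projectivizing and taking orbit closures.

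The preparatory step is to pin down the lift $(\tau,\lambda)\mapsto(\widetilde{\tau},\widetilde{\lambda})$ at the level of Weyl group elements and dominant integral weights. I would read $\widetilde{\tau}\in\mathfrak{S}_{2n}$ directly off the rectangular shape of $\tau$, generalizing the specific element used in \cite{CLL} for $w_0$, so that rectangularity is precisely what makes this lift canonical. The weight $\widetilde{\lambda}$ will be the dominant weight for $\mathfrak{sl}_{2n}$ whose restriction along a fixed embedding of Cartans recovers $\lambda$, up to the shift encoding the PBW grading; in particular, its vanishing pattern on simple roots will single out a parabolic $\widetilde{P}\subset \SL_{2n}$ that will host the Schubert variety of (2).

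For part (1), I would construct a surjection $V_{\widetilde{\tau}}(\widetilde{\lambda})\twoheadrightarrow V^{-,a}_\tau(\lambda)$ by sending the extremal weight line $\widetilde{\tau}v_{\widetilde{\lambda}}$ to the cyclic generator of the degenerate Demazure module. The technical content is to show that the abelianized negative root operators on the degenerate side lift through a suitable parabolic subalgebra of $\mathfrak{sl}_{2n}$ in such a way that the image is stable; this is the natural analogue of the CLL identification of the degenerate negative part of $\mathfrak{sl}_{n+1}$ with a block of a larger nilpotent subalgebra of $\mathfrak{sl}_{2n}$. Once the surjection is in place, I would upgrade it to an isomorphism by a dimension count, using a monomial basis for the degenerate Demazure module in the spirit of \cite{FFL,Fou} on one side, and the Demazure character formula on the other.

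Part (2) then follows essentially formally. The degenerate Schubert variety is the closure of the degenerate Borel orbit of the highest weight line in $\mathbb{P}(V^{-,a}_w(\lambda))$, and transporting this orbit along the isomorphism of (1), together with identifying the acting degenerate Borel as the stabilizer of $\widetilde{w}v_{\widetilde{\lambda}}$ inside the relevant parabolic of $\SL_{2n}$, yields a Schubert variety in the partial flag variety $\SL_{2n}/\widetilde{P}$. The principal obstacle is the dimension match in (1): constructing the surjection is largely bookkeeping, but proving that the two sides have the same dimension requires an explicit combinatorial bijection between the monomial indexing set on the degenerate side and a Demazure-type basis on the larger side, and this is exactly where the rectangular hypothesis must do all the work — for non-rectangular $\tau$ no such enlarged Demazure realisation can exist, so any proof is forced to use rectangularity in a structural way here rather than only at the level of the initial combinatorial lift.
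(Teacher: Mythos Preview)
Your overall architecture matches the paper's: build a combinatorial lift $(\tau,\lambda)\mapsto(\widetilde{\tau},\widetilde{\lambda})$, prove the module isomorphism, then deduce the variety statement. But several specifics differ from what the paper actually does, and one of them is a genuine weak point.

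First, the direction of your surjection is reversed. You propose $V_{\widetilde{\tau}}(\widetilde{\lambda})\twoheadrightarrow V^{-,a}_\tau(\lambda)$; the paper instead builds $V^{-,a}_\tau(\lambda)\twoheadrightarrow V_{\widetilde{\tau}}(\widetilde{\lambda})$. The reason is that the defining ideal of the \emph{degenerate} module is known explicitly (from \cite{FFL,Fou}), and it is easy to check that its generators die in the Demazure module; going your way would require explicit generators for the annihilator of the extremal vector in $V_{\widetilde{\tau}}(\widetilde{\lambda})$, which is less readily available. Relatedly, the paper does not attack a general $\lambda$ directly: it first treats fundamental weights, where the defining ideal is quadratic and the Pl\"ucker-type relations can be checked by hand in the minuscule module, and then passes to arbitrary $\lambda$ via the Cartan component of the tensor product.

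Second, and more importantly, your dimension argument is the weak link. You propose matching a monomial basis on the degenerate side against the Demazure character formula on the $\mathfrak{sl}_{2n}$ side. The paper avoids this entirely by proving that $\widetilde{\tau}$ is \emph{itself} rectangular (hence triangular), so Fourier's monomial basis from \cite{Fou} applies to \emph{both} sides. The map $i_A\circ\mathcal{D}$ is then shown to be a poset isomorphism $N(\tau)\to\widetilde{N}(\widetilde{\tau})$ compatible with meets and joins, and together with the matching of scalar products $\langle\lambda,\alpha^\vee\rangle=\langle\widetilde{\lambda},\widetilde{\alpha}^\vee\rangle$ this gives an isomorphism of the two marked chain polytopes, hence equal lattice-point counts. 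Your route via the Demazure character formula is not wrong in principle, but it leaves you with a nontrivial identity to prove rather than a bijection to exhibit; you should expect this to be where the real difficulty concentrates.

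Finally, you allude to the abelianized operators ``lifting through a parabolic'', but you do not isolate the structural fact that drives everything: for the constructed $\widetilde{\tau}$, no two roots in $\widetilde{N}(\widetilde{\tau})$ sum to a root, so $\widetilde{\mathfrak{n}}^+_{\widetilde{\tau}}$ is already abelian and the Demazure module needs no degeneration at all. This is the precise mechanism by which the PBW-degenerate object on the small side becomes an honest Demazure module on the large side, and it should be stated up front rather than left implicit.
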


We explain the terminology \textit{rectangular} in the theorem. Rectangular elements may be characterized in various way: via a property of the set of positive roots sent to negative roots as in Definition \ref{Def:Rectangular} using rectangular subsets, or via certain inequalities as in Proposition \ref{Prop:Rectangular}, or, finally, as permutations avoiding the four patterns $2413$, $2431$, $4213$ and $4231$ as proved in Corollary \ref{Corollary:patternAvoiding}. 
For example, in $\mathfrak{S}_4$, there are 20 rectangular elements. We give an example in Remark \ref{Rmk:NonRect} of a non-rectangular element, showing that the above theorem does not hold by following the construction in this paper.

Since the longest element in the Weyl group of type $\mathsf{A}$ is a rectangular element, the results in \cite{CL} and \cite{CLL} in type $\mathsf{A}$ are special cases of the above theorem (Example \ref{Ex:w0}). The proof provided here is different from that in \cite{CL}, while partially follows the strategy of \cite{CLL}, indeed this last paper was the main inspiration for our combinatorial construction. Note that in \cite{CLL} everything is proved over $\Z$ while all our results and proofs are over $\C$; however our assumption is only for the sake of simplicity and we believe it should be straightforward to generalize our paper to $\Z$.

Parallel to the results in type $\mathsf{A}$, the theory of PBW filtration has been well understood for the symplectic Lie algebras \cite{FFL2, FeFiL}, we expect results parallel to this paper in the symplectic case, depending on the presence of symplectic triangular and rectangular elements and corresponding useful bases for the degenerate Demazure modules.


\subsection{Outline of proof}

We briefly explain the idea of the proof. For $\sigma\in\mathfrak{S}_{n+1}$, we denote $N(\sigma)$ to be the image of the negative roots in the positive roots under $\sigma^{-1}$. Let $\tau\in\mathfrak{S}_{n+1}$ be a rectangular element. We construct an element $\ttau\in\mathfrak{S}_{2n}$
such that for any $\alpha,\beta\in N(\ttau)$, $\alpha+\beta\notin N(\ttau)$, which implies that the Lie subalgebra of $\mathfrak{sl}_{2n}$ spanned by roots in $N(\ttau)$ is abelian. Moreover, for a given dominant integral weight $\lambda$ for $\mathfrak{sl}_{n+1}$, we provide a dominant integral weight $\widetilde{\lambda}$ for $\mathfrak{sl}_{2n}$ such that the scalar products of $\lambda$ with the roots of $N(\tau)$ coincide with the scalar products of $\widetilde{\lambda}$ with $N(\ttau)$.

In this sense, we have identified the abelianized Lie subalgebras of $\mathfrak{sl}_{n+1}$ generating the Demazure modules, with abelian Lie subalgebras of $\mathfrak{sl}_{2n}$. We then show that this induces an isomorphism between the PBW degenerated Demazure modules $V^{-,a}_\tau(\lambda)$ for $\mathfrak{sl}_{n+1}$ and the Demazure modules $V_{\ttau}(\widetilde{\lambda})$ of $\mathfrak{sl}_{2n}$, terminating the proof of our main theorem.

\subsection{Organisation of paper}

The paper is organised as follows: we start by giving a few more details on the main theorem in Section~\ref{sec-result}, then we introduce the combinatorics related to rectangular subsets in Section~\ref{Sec:Rectangular}. In Section~\ref{Sec:DoubleRank}, we build the bridge from $\mathfrak{sl}_{n+1}$ to $\mathfrak{sl}_{2n}$, followed by the proof of the main theorem in Section~\ref{Sec:Proof}. We conclude the paper with a few consequences and remarks in Section~\ref{sec-cons}.

\subsection{Acknowledgements}  

Part of the work was carried out during a research visit of X.F. to University of Hannover. He would like to thank Leibniz University of Hannover for the hospitality. The first author would like to thank Professor Andrea Maffei for various useful conversations.

\section{Statement of main result}\label{sec-result}

\subsection{Notations in Lie theory}

Let $G=\SL_{n+1}(\bc)$, $B$ be the Borel subgroup of $G$ consisting of upper triangular matrices, $T$ be the maximal torus contained in $B$ with normalizer $N(T)$ and let $W=N(T)/T$ be the Weyl group.

Let $\g=\mathfrak{sl}_{n+1}$ be the corresponding Lie algebra and $\g=\n^+\oplus\mathfrak{h}\oplus\n^-$ be the triangular decomposition with $\mathfrak{h}$ the Lie algebra of $T$ and $\mathfrak{b}^+:=\mathfrak{n}^+\oplus \mathfrak{h}$ the Lie algebra of $B$. We denote by $U(\g)$, $U(\n^-)$ and $U(\mathfrak{b}^+)$ the corresponding universal enveloping algebras.

The root system of $\g$ is denoted by $\Phi$; $\Phi^+$ (resp. $\Phi^-$) stands for the set of positive (resp. negative) roots. We choose the usual numbering of the simple roots $\alpha_1,\cdots,\alpha_n$ in $\mathfrak{h}^*$ so that
$$
\Phi^+=\{\alpha_{i,j}=\alpha_i+\alpha_{i+1}+\cdots+\alpha_{j}\mid 1\leq i\leq j\leq n\}.
$$
The reflection associated with a root $\alpha\in\Phi$ will be denoted by $s_\alpha$; if $\alpha=\alpha_i$, we will abbreviate $s_i:=s_{\alpha_i}$. Having fixed an enumeration of the simple roots, we may identify $W$ with $\mathfrak{S}_n$.

The \emph{support} $\supp(\alpha_{i,j})$ of the root $\alpha_{i,j}$ is defined to be the subset $\{i,i+1,\cdots,j\}$ of $[n]:=\{1,2,\ldots,n\}$. A subset $I\subseteq [n]$ is called connected, if it is an interval. For a subset $I$, we let $\Phi_I^+$ denote the set of positive roots in $\Phi^+$ supported in $I$.

For $\alpha\in\Phi^+$ we fix $e_\alpha$ and $f_\alpha$ to be basis elements in the root spaces of $\g$ corresponding to $\alpha$ and $-\alpha$. We choose $h_\alpha\in\mathfrak{h}$ such that $e_\alpha,f_\alpha,h_\alpha$ form a $\mathfrak{sl}_2$-triple in $\g$. When $\alpha=\alpha_{i,j}\in\Phi^+$, we will denote $e_{i,j}:=e_{\alpha_{i,j}}$ and $f_{i,j}:=f_{\alpha_{i,j}}$.

Let $\Lambda$ (resp. $\Lambda^+$) denote the weight lattice (resp. the monoid of the dominant weights in $\Lambda$) and $\varpi_1,\cdots,\varpi_n$ denote the fundamental weights.

Now, fix a weight $\lambda \in \Lambda^+$, denote by $V(\lambda)$ the irreducible representation of $G$ of highest weight $\lambda$ and let $v_\lambda\in V(\lambda)$ be a fixed highest weight vector. Let also $P_\lambda$ be the parabolic subgroup of $G$ containing $B$ and stabilizing the line $\bc\cdot v_\lambda$ in $V(\lambda)$. The Schubert variety $X_\tau^\lambda$ associated with an element $\tau\in W$ in the partial flag variety $G/P_\lambda$ is the closure $\overline{B.\tau P_\lambda}$ of the $B$--orbit of any lift of $\tau$ to $N(T)$.

The extremal weight space $V(\lambda)_{\tau(\lambda)}$ of weight $\tau(\lambda)$ is one-dimensional; we fix the non-zero basis element $v_{\tau(\lambda)}:=\tau\cdot v_\lambda$ in this weight space. The Demazure module $V_\tau(\lambda) := U(\mathfrak{b}^+)\cdot v_{\tau(\lambda)}$ is the cyclic $\mathfrak{b}^+$-module generated by $v_{\tau(\lambda)}$. The closure of the orbit through $v_{\tau(\lambda)}$ in the projective space $\mathbb{P}(V_\tau(\lambda))$
$$X_\tau(\lambda) := \overline{B\cdot [v_{\tau(\lambda)}]} \subseteq \mathbb{P}(V_\tau(\lambda))$$
is isomorphic to the Schubert variety $X_\tau^\lambda$. In particular for $\tau=w_0$ the longest element of $W$ we get an embedding of the partial flag variety $G/P_\lambda$.
\par
For $\tau\in W$, we denote $N(\tau) := \Phi^+\cap \tau^{-1}(\Phi^-)$ the set of positive roots sent to negative roots by $\tau$, and $P(\tau):=\Phi^+\cap \tau^{-1}(\Phi^+)$ the set of positive roots sent to positive roots by $\tau$. We denote $\mathfrak{n}_\tau^-$ the subspace of $\n$ consisting of root spaces associated with negative roots in $-N(\tau)$: this is a Lie subalgebra of $\mathfrak{n}$. In this case, we let $N_\tau^-$ be the corresponding algebraic group in $G$ having Lie algebra $\mathfrak{n}_\tau^-$: it is generated by root subgroups associated with roots in $-N(\tau)$.

Since $-N(\tau)= \tau^{-1}N(\tau^{-1})$ we have
$$
V_\tau(\lambda)=\tau(\tau^{-1}U(\mathfrak{n}^+)\tau) \cdot v_\lambda=\tau U(\mathfrak{n}_\tau^-) \cdot v_\lambda.
$$
We define $V_\tau^-(\lambda):=U(\mathfrak{n}_\tau^-)\cdot v_\lambda$ and call it $\tau^{-1}$-\emph{twisted Demazure module}.

Notice that $e_\alpha\cdot v_{\tau(\lambda)}=0$ unless $\alpha\in N(\tau^{-1})$. We endow $V_\tau(\lambda)$ with an $\mathfrak{n}_\tau^-$-module structure via twisting by $\tau$. Then as $\mathfrak{n}_\tau^-$-modules, $V^-_\tau(\lambda)$ is isomorphic to $V_\tau(\lambda)$.

We define the \emph{twisted Schubert variety} as a highest weight orbit:
$$X_\tau^-(\lambda):=\overline{N_\tau^-\cdot [v_\lambda]} \subseteq \mathbb{P}(V(\lambda)).$$ 
As projective varieties, $X_\tau^-(\lambda)$ is isomorphic to $X_\tau(\lambda)$. The Schubert varieties which will be studied in this paper are the twisted Schubert varieties.

\subsection{PBW degeneration}\label{Sec:PBW}

Let $\mathfrak{a}$ be a finite dimensional Lie algebra. The PBW filtration on $U(\mathfrak{a})$ is defined by: for $k\in\mathbb{N}$,
$$U(\mathfrak{a})_{\leq k}:=\mathrm{span}\{x_1\cdots x_s\mid x_1,\cdots,x_s\in\mathfrak{a}, s\leq k\}.$$

It endows $U(\mathfrak{a})$ with a filtered algebra structure. By the Poincar\'e-Birkhoff-Witt (PBW) theorem, the associated graded algebra is isomorphic to the symmetric algebra $S(\mathfrak{a})$.

Let $M$ be a cyclic $\mathfrak{a}$-module generated by $m$: that is to say, $M=U(\mathfrak{a})\cdot m$. The PBW filtration on $U(\mathfrak{a})$ endows $M$ with a filtered $U(\mathfrak{a})$-module structure by defining for $k\in\mathbb{N}$, $M_{\leq k}:=U(\mathfrak{a})_{\leq k}\cdot m$. We denote by $M^a$ the associated graded $S(\mathfrak{a})$-module. It is easy to see that $M^a$ is a cyclic $S(\mathfrak{a})$-module generated by $m^a$, the image of $m$ in $M^a$. The defining ideal $I^a(M)$ of $M^a$ in $S(\mathfrak{a})$ is defined to be the kernel of the following map
$$
S(\mathfrak{a})\longrightarrow S(\mathfrak{a})\cdot m=M^a,\ \ x\longmapsto x\cdot m^a.
$$

When $\mathfrak{a}=\mathfrak{n}^-$ and $M=V(\lambda)$ for $\lambda\in P^+$, the $S(\mathfrak{n}^-)$-module $V^a(\lambda)$ is studied in \cite{FFL}. A monomial basis of $V^a(\lambda)$, parametrized by the Feigin-Fourier-Littelmann-Vinberg polytope, is constructed in \cite{ABS}; and the defining ideal of the $S(\mathfrak{n}^-)$-module $V^a(\lambda)$ is made explicit.

A geometric interpretation of the PBW degeneration is constructed by Feigin in \cite{Fei}. We briefly recall his construction. Let $\mathfrak{n}^{-,a}$ be the unique abelian Lie algebra such that $U(\mathfrak{n}^{-,a})=S(\mathfrak{n})$; and 
$$N^{-,a}:=\exp(\mathfrak{n}^{-,a})\simeq\mathbb{G}_a^N$$ 
be the $N$-fold product of the additive group $\mathbb{G}_a$ where $N=\dim\n^-$. The degenerate flag variety is defined in \cite{Fei} as the closure of the highest weight orbit
$$\mathcal{F}^a(\lambda):=\overline{N^{-,a}\cdot [v_\lambda^a]}\subseteq \mathbb{P}(V^a(\lambda)).$$

As shown in the same paper, this projective variety is a flat degeneration of the flag variety
$$
\mathcal{F}(\lambda):=\exp(\mathfrak{g})\cdot [v_\lambda] = \overline{B\cdot[v_\lambda]}\subseteq \mathbb{P}(V(\lambda)).
$$
Moreover, in \cite{CL}, the authors realized $\mathcal{F}^a(\lambda)$ as a Schubert variety in a partial flag variety of $\SL_{2n}(\mathbb{C})$; furthermore, in \cite{CLL}, the PBW degenerate modules $V^a(\lambda)$ are realized as Demazure modules, and the defining ideal of the $S(\mathfrak{n}^-)$-module $V^a(\lambda)$ is recovered from that of the Demazure module.

\subsection{PBW degeneration of Demazure modules}\label{Subsec:Demazure}

The goal of this paper is to study another situation of the PBW degeneration for the Demazure module associated with certain elements $\tau\in W$. In the setting of the last subsection, we consider $\mathfrak{a}=\n_\tau^{-}$, and $M=V_\tau^-(\lambda)$ for $\lambda\in \Lambda^+$.

The construction in the last subsection gives an $\n_\tau^{-,a}$-module $V^{-,a}_\tau(\lambda)$.

Since $\n_\tau^-\subseteq \n^-$ and $V_\tau^-(\lambda)\subseteq V(\lambda)$, we first compare the PBW filtration on $V_\tau^-(\lambda)$ itself and the one inherited from $V(\lambda)$.

\begin{proposition}\label{Prop:DegSchubert}
For any $\lambda\in\Lambda^+$, $V^{-,a}_\tau(\lambda)\subseteq V^a(\lambda)$.
\end{proposition}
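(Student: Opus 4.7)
The plan is to construct a natural map $\phi: V^{-,a}_\tau(\lambda) \to V^a(\lambda)$ from the inclusion of filtered modules $V^-_\tau(\lambda) \hookrightarrow V(\lambda)$, and then show that $\phi$ is injective.

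For the map: since $\mathfrak{n}_\tau^- \subseteq \mathfrak{n}^-$, we have $U(\mathfrak{n}_\tau^-)_{\leq k} \subseteq U(\mathfrak{n}^-)_{\leq k}$, and applying both sides to $v_\lambda$ yields $V^-_\tau(\lambda)_{\leq k} \subseteq V(\lambda)_{\leq k}$. The inclusion $V^-_\tau(\lambda) \hookrightarrow V(\lambda)$ is therefore filtration-preserving, and taking associated gradeds produces a well-defined $S(\mathfrak{n}_\tau^-)$-linear map $\phi: V^{-,a}_\tau(\lambda) \to V^a(\lambda)$ sending $v_\lambda^{-,a}$ to $v_\lambda^a$.

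The substantive content is injectivity of $\phi$, which is equivalent to the claim that the intrinsic PBW filtration on $V^-_\tau(\lambda)$ agrees with the filtration inherited from $V(\lambda)$: $V^-_\tau(\lambda)_{\leq k} = V^-_\tau(\lambda) \cap V(\lambda)_{\leq k}$ for all $k$. I would prove this using the Feigin--Fourier--Littelmann monomial basis of $V(\lambda)$ from \cite{FFL}. Fix a PBW ordering of $\{f_\alpha\}_{\alpha\in\Phi^+}$ placing those root vectors with $\alpha\in N(\tau)$ first. The FFL theorem supplies a set $\Pi_\lambda \subseteq \mathbb{Z}_{\geq 0}^{\Phi^+}$ of lattice points in the FFLV polytope such that $\{f^s\cdot v_\lambda : s\in\Pi_\lambda\}$ is a basis of $V(\lambda)$, with the sub-collection $\{|s|\leq k\}$ a basis of $V(\lambda)_{\leq k}$. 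The key sub-claim is that $\{f^s\cdot v_\lambda : s\in\Pi_\lambda,\ \supp(s)\subseteq N(\tau)\}$ is in turn a basis of $V^-_\tau(\lambda)$; granted this, $V^-_\tau(\lambda) \cap V(\lambda)_{\leq k}$ is spanned by those sub-basis vectors with $|s|\leq k$, which equals $V^-_\tau(\lambda)_{\leq k}$, and $\phi$ sends a basis of $V^{-,a}_\tau(\lambda)$ to a subset of a basis of $V^a(\lambda)$, hence is injective.

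The hardest step is establishing the sub-basis claim: it is immediate for $\tau=w_0$ (where $N(\tau)=\Phi^+$) and proved in \cite{Fou} for triangular elements, but in the general setting one must verify that FFL-style straightening of products $f_{\alpha_1}\cdots f_{\alpha_s}\cdot v_\lambda$ with $\alpha_i\in N(\tau)$ preserves the support condition $\supp(s)\subseteq N(\tau)$. A uniform alternative, avoiding an explicit basis, is via characters: the image of $\phi$ is the cyclic submodule $S(\mathfrak{n}_\tau^-)\cdot v_\lambda^a \subseteq V^a(\lambda)$, and showing that its $T$-character equals $\operatorname{ch} V^-_\tau(\lambda) = \operatorname{ch} V^{-,a}_\tau(\lambda)$ forces $\phi$ to be an isomorphism onto its image.
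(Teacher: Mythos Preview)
Your reduction to showing that the intrinsic and induced PBW filtrations on $V^-_\tau(\lambda)$ coincide is correct and matches the paper's starting point. However, the paper proceeds quite differently and more economically: rather than working at the module level with FFL bases, it establishes the enveloping--algebra identity $U(\mathfrak{n}^-)_{\leq k}\cap U(\mathfrak{n}_\tau^-)=U(\mathfrak{n}_\tau^-)_{\leq k}$ directly. One fixes a total order on a root--vector basis of $\mathfrak{n}^-$, writes an element of the intersection in the resulting PBW basis (all monomials have degree $\le k$), and then observes that rewriting into the induced PBW basis of $U(\mathfrak{n}_\tau^-)$ does not raise degree (the paper cites \cite{GGP} and \cite{HO} for this). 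This argument is uniform in $\tau$ and uses no representation--theoretic input beyond PBW.

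Your FFL--basis route, by contrast, is both heavier and narrower. The sub-basis claim you isolate is precisely what \cite{Fou} supplies for \emph{triangular} $\tau$, so your argument is complete only in that case; since the proposition is stated (and proved in the paper) for arbitrary $\tau\in W$, this is a genuine gap. Your proposed character alternative does not close it: showing that the image $S(\mathfrak{n}_\tau^-)\cdot v_\lambda^a\subseteq V^a(\lambda)$ has character equal to $\operatorname{ch} V^-_\tau(\lambda)$ requires a lower bound on the image, which is exactly the injectivity of $\phi$ you are trying to establish.
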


\begin{proof}
It suffices to show that the following filtrations on $V_\tau^-(\lambda)$ coincide:
\begin{enumerate}
\item the filtration obtained from the PBW filtration on $V(\lambda)$ by intersection;
\item the filtration arising from the PBW filtration on $U(\mathfrak{n}_\tau^-)$.
\end{enumerate}
We show that for any $k\in\mathbb{N}$,
$$U(\mathfrak{n}^-)_{\leq k}\cap U(\mathfrak{n}_\tau^-)=U(\mathfrak{n}_\tau^-)_{\leq k}.$$
The inclusion $\supseteq$ is clear by definition. To show the other inclusion, 
we fix a total order on a weight basis $x_1,\cdots,x_N$ of $\mathfrak{n}^-$, which induces a totally ordered basis of $\mathfrak{n}_\tau^-$. Take an element $x\in U(\mathfrak{n}^-)_{\leq k}\cap U(\mathfrak{n}_\tau^-)$ and write it in the fixed basis of $\mathfrak{n}^-$ using the PBW theorem, then every monomial appearing with a non-zero coefficient has PBW degree less or equal to $k$. This element is contained in $U(\mathfrak{n}_\tau^-)$, which means that one can rewrite $x$ in the induced PBW basis of $\mathfrak{n}_\tau^-$. 
According to \cite[Theorem 6.3]{GGP} (see also the statement \cite[(0.2)]{HO}), this rewriting procedure will not increase the PBW degree, showing that $x\in U(\mathfrak{n}_\tau^-)_{\leq k}$.
\end{proof}

Let $N_\tau^{-,a}:=\exp(\n_\tau^{-,a})\simeq\mathbb{G}_a^M$ where $M=\ell(\tau)$. According to this proposition, we can look the highest weight orbit closure
$$X_\tau^a(\lambda):=\overline{N_\tau^{-,a}\cdot [v_\lambda^a]}\subseteq\mathbb{P}(V_\tau^{-,a}(\lambda))$$
as a closed subvariety of $\mathcal{F}^a(\lambda)$. As their constructions are similar to the twisted Schubert varieties, we call them \emph{degenerate Schubert varieties} in degenerate flag varieties. One should compare this definition with the one given by the third author in \cite{Fou}.

We recall the definition of triangular elements introduced by the third author in \cite{Fou}; later, in Proposition \ref{Proposition:triangularAvoiding} we characterize triangular elements as the permutations avoiding the two patterns $4231$ and $2413$.

\begin{definition}
An element $\tau \in W\simeq\mathfrak{S}_n$ is called \emph{triangular}, if the following property is satisfied: for any $1\leq i<k\leq j<\ell$ with $\tau(i)>\tau(j)$, $\tau(k)>\tau(\ell)$, one has $\tau(i)>\tau(\ell)$ and $\tau(k)\geq \tau(j)$.
\end{definition}

In \cite{Fou}, for a triangular element $\tau\in W$, a monomial basis of $V_\tau^{-,a}(\lambda)$ is constructed in the spirit of \cite{FFL}, which is parametrised by a suitable face of the polytope described in \cite{FFL}.

The goal of this paper is to study for which triangular element $\tau\in W$, the degenerate Schubert variety is indeed a Schubert variety in a higher rank special linear group.

\subsection{Statement of the main result}\label{Subsec:mainResult}

Let $\widetilde{G} = \SL_{2n}$. When an object $X$ (roots, weights, subalgebras, Weyl group elements, etc...) is referred to $\widetilde{G}$, we write it as $\widetilde{X}$. 

In order to state the main result of this paper, we give here a direct definition of rectangular elements in the Weyl group $W$, later in \ref{Subsec:StructureOfRectangular} we will present a more convenient but equivalent definition and then in Corollary \ref{Corollary:patternAvoiding} we will see a characterization in terms of pattern avoidance. It is clear from the following definition that rectangular elements are triangular.

\begin{definition}
An element $\tau \in W\simeq\mathfrak{S}_{n+1}$ is called \emph{rectangular}, if for any $1\leq i<k<j<\ell\leq n+1$, the following conditions are equivalent:
\begin{enumerate}
\item $\tau(i)>\tau(j)$ and $\tau(k)>\tau(\ell)$;
\item $\tau(i)>\tau(\ell)$ and $\tau(k)>\tau(j)$.
\end{enumerate}
\end{definition}

The main result of the paper is the following:

\begin{theorem}\label{Thm:Schubert}
Let $\tau \in W$ be a rectangular element and $\lambda \in \Lambda^+$. Then there exists $\widetilde{\lambda} \in \widetilde{\Lambda}^+$ and $\widetilde{\tau} \in \widetilde{W}\simeq \mathfrak{S}_{2n}$ such that
\[
X^a_\tau(\lambda) \textrm{ is isomorphic to } X_{\widetilde{\tau}}(\widetilde{\lambda})
\]
as $\mathbb{G}^{M}_a$-projective varieties where $M=\ell(\tau)$.
\end{theorem}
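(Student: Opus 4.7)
The plan is to follow the outline of proof sketched in the introduction, which itself is inspired by the strategy of \cite{CLL} but executed at the finer level of a rectangular element $\tau$ rather than the longest element. The key combinatorial input is that the notion of rectangular element (Definition above, reformulated via rectangular subsets in Section~\ref{Subsec:StructureOfRectangular}) is exactly what makes the following picture work: the inversion set $N(\tau) \subseteq \Phi^+$ decomposes into a disjoint union of ``rectangular blocks'' whose geometry can be replicated inside $\Phi^+(\widetilde{G})$ by an abelian inversion set $N(\widetilde{\tau})$. Thus the first step would be to use the combinatorial machinery of Section~\ref{Sec:Rectangular} to associate to $\tau$ an element $\widetilde{\tau} \in \mathfrak{S}_{2n}$ (this will be constructed in Section~\ref{Sec:DoubleRank}) with the two crucial properties: (a) $N(\widetilde{\tau})$ is \emph{abelian}, meaning $\alpha+\beta \notin N(\widetilde{\tau})$ for all $\alpha,\beta \in N(\widetilde{\tau})$; and (b) there is an explicit bijection $\psi : N(\tau) \to N(\widetilde{\tau})$ of index sets.

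The second step is to lift $\lambda \in \Lambda^+$ to $\widetilde{\lambda} \in \widetilde{\Lambda}^+$ such that $\pair{\lambda}{\alpha} = \pair{\widetilde{\lambda}}{\psi(\alpha)}$ for every $\alpha \in N(\tau)$. Because $N(\widetilde{\tau})$ is abelian, we have $\mathfrak{n}_{\widetilde{\tau}}^- \cong \widetilde{\mathfrak{n}}_{\widetilde{\tau}}^{-,a}$ canonically as Lie algebras, and hence $S(\widetilde{\mathfrak{n}}_{\widetilde{\tau}}^-) = U(\widetilde{\mathfrak{n}}_{\widetilde{\tau}}^-)$. The bijection $\psi$ therefore induces a Lie algebra isomorphism $\mathfrak{n}_\tau^{-,a} \xrightarrow{\sim} \widetilde{\mathfrak{n}}_{\widetilde{\tau}}^-$ sending $f_\alpha \mapsto \widetilde{f}_{\psi(\alpha)}$. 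The heart of the argument will be to show that this isomorphism, applied to the generators, extends to an isomorphism of cyclic modules
\[
V^{-,a}_\tau(\lambda) \xrightarrow{\sim} V^-_{\widetilde{\tau}}(\widetilde{\lambda}),\qquad v^a_\lambda \mapsto v_{\widetilde{\lambda}}.
\]
To get this, I would compare the defining ideals: the monomial basis of $V^{-,a}_\tau(\lambda)$ from \cite{Fou} (parametrised by the appropriate face of the FFL polytope) must be matched, via $\psi$, with a basis of the Demazure module $V^-_{\widetilde{\tau}}(\widetilde{\lambda})$ in $\mathfrak{sl}_{2n}$. One direction (surjectivity of the map from the PBW side) is formal from the assumption $\pair{\lambda}{\alpha}=\pair{\widetilde{\lambda}}{\psi(\alpha)}$, which guarantees that the Serre-type/Garland-type relations $f_\alpha^{\pair{\lambda}{\alpha}+1}\cdot v^a_\lambda = 0$ on both sides are respected. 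The reverse direction requires that no new relations appear on the $\widetilde{\tau}$ side; this is where the abelianness of $N(\widetilde{\tau})$ is essential, since then the defining relations of the Demazure module $V^-_{\widetilde{\tau}}(\widetilde{\lambda})$ are (by \cite{CLL}, or directly because the ambient $U(\widetilde{\mathfrak{n}}_{\widetilde{\tau}}^-)$ is already commutative) exactly the Garland-type relations, matching those of $V^{-,a}_\tau(\lambda)$ from \cite{FFL, Fou}.

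Once the module isomorphism is established, passing to projective varieties is geometric: both $X^a_\tau(\lambda)$ and $X_{\widetilde{\tau}}(\widetilde{\lambda})$ are closures of highest-weight orbits under $\mathbb{G}_a^M$-actions (with $M = \ell(\tau)$), namely $N_\tau^{-,a}\cdot[v^a_\lambda]$ and $\widetilde{N}_{\widetilde{\tau}}^- \cdot [v_{\widetilde{\lambda}}]$. Because $N(\widetilde{\tau})$ is abelian, $\widetilde{N}_{\widetilde{\tau}}^-$ is isomorphic to $\mathbb{G}_a^M$, and the module isomorphism above intertwines the two group actions via the Lie algebra isomorphism $\mathfrak{n}_\tau^{-,a} \xrightarrow{\sim} \widetilde{\mathfrak{n}}_{\widetilde{\tau}}^-$. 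Thus the induced linear isomorphism of projective spaces carries one orbit to the other, and passing to closures gives the desired isomorphism of $\mathbb{G}_a^M$-varieties, yielding Theorem~\ref{Thm:Schubert} (and isomorphism with $X_{\widetilde{\tau}}(\widetilde{\lambda})$ follows as in the remark after its definition, since twisted and untwisted Schubert varieties are isomorphic).

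The main obstacle I anticipate is the combinatorial construction of $\widetilde{\tau}$ and $\widetilde{\lambda}$ together with the verification of the abelianness of $N(\widetilde{\tau})$ — the rectangular hypothesis must be used in a delicate way to prevent sums of two elements of $N(\widetilde{\tau})$ from landing back in $N(\widetilde{\tau})$. A subtle secondary point is making sure that, in rewriting a PBW monomial supported on $N(\tau)$ as a linear combination of monomials supported on $N(\widetilde{\tau})$, one does not accidentally pick up relations that are stronger on one side than on the other; once the bijection $\psi$ and the weight identification are in place, Proposition~\ref{Prop:DegSchubert} and the explicit basis from \cite{Fou} should make this comparison purely combinatorial.
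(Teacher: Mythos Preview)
Your outline captures the overall architecture, but there is a genuine gap in the step where you compare defining ideals. You assert that, because $N(\widetilde{\tau})$ is abelian, ``the defining relations of the Demazure module $V^-_{\widetilde{\tau}}(\widetilde{\lambda})$ are \ldots\ exactly the Garland-type relations'', and that matching scalar products $\pair{\lambda}{\alpha}=\pair{\widetilde{\lambda}}{\psi(\alpha)}$ therefore suffices to match the ideals. This is not correct: even for abelian $\mathfrak{n}_\tau^-$, the defining ideal of the (degenerate) Demazure module is \emph{not} generated by the powers $f_\alpha^{\pair{\lambda}{\alpha}+1}$ alone. Already for a fundamental weight $\varpi_r$ the ideal $I^a_\tau(\varpi_r)$ contains quadratic Pl\"ucker-type relations of the form $f_\alpha f_\beta+f_{\alpha\join\beta}f_{\alpha\meet\beta}$ (see Lemma~\ref{Lem:DefIdeal} and Corollary~\ref{Cor:Ideal}). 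For these to be sent to relations in the $\widetilde{\tau}$--ideal, your bijection $\psi$ must preserve the \emph{meet} and \emph{join}, not merely the scalar products with the weight; this is the content of Corollary~\ref{corollary_latticeIsomorphism} and it is not a formal consequence of abelianness or of your condition~(b). Without this poset compatibility the ``formal surjectivity'' step fails.

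The paper circumvents a direct comparison of defining ideals for general $\lambda$ by a two-stage argument you do not mention. First, Proposition~\ref{Prop:DimEst} matches dimensions via the marked chain polytopes of \cite{Fou} on \emph{both} sides (using that $\widetilde{\tau}$ is itself triangular, so \cite{Fou} applies to it); here the crucial input is again that $i_A\circ\mathcal{D}$ is a poset isomorphism preserving $\meet$ and $\join$. Second, one treats the fundamental case $\lambda=\varpi_r$ explicitly: the generators (G1$'$),(G2$'$) of $I^a_\tau(\varpi_r)$ are checked to die in $V_{\widetilde{\tau}}(\widetilde{\varpi}_{r'})$ (Lemma~\ref{Lem:Fundamental}), where (G2$'$) uses the meet/join preservation together with the minuscule property of $\widetilde{\varpi}_{r'}$; the surjection is then an isomorphism by the dimension count. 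General $\lambda$ follows by embedding into the Cartan component of the tensor product of fundamentals and invoking the Minkowski property from \cite{Fou} (Proposition~\ref{Prop:IsoGeneral}). Your geometric conclusion---passing from the module isomorphism to an isomorphism of $\mathbb{G}_a^M$--orbit closures---is essentially correct and matches the paper; the issue lies entirely in establishing the module isomorphism.
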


The rest of the paper is devoted to the proof of this theorem. In Section \ref{Sec:Rectangular} we study the combinatorics of rectangular elements in terms of rectangular subsets. In Section \ref{Sec:DoubleRank} we give the explicit construction of the Weyl group element $\ttau$ and the weight $\tlambda$ appearing in Theorem \ref{Thm:Schubert}, and study the properties of these elements. The proof of Theorem \ref{Thm:Schubert} will be given in Section \ref{Sec:Proof}.

\section{Rectangular elements}\label{Sec:Rectangular}

\subsection{Triangular and rectangular subsets}

The dominance order in $\Lambda$ results in the following poset structure on $\Phi^+$: for $\alpha_{i,j}$, $\alpha_{k,\ell}\in\Phi^+$,
$$\alpha_{i,j}\geq\alpha_{k,\ell}\text{ precisely when }i\leq k\text{ and }j\geq \ell.$$

\begin{definition}\label{defn-join-meet}
Let $\alpha_{i,j},\alpha_{k,\ell}\in\Phi^+$ be two positive roots. 
\begin{enumerate}
\item Their \emph{join} is defined by:
$$\alpha_{i,j}\join \alpha_{k,\ell}:=\alpha_{\min(i,k),\max(j,\ell)};$$
it is the minimal positive root greater than both $\alpha_{i,j}$ and $\alpha_{k,\ell}$.
\item If $\supp(\alpha_{i,j})\cap\supp(\alpha_{k,\ell})=[i,j]\cap[k,\ell]$ is non void, or equivalently if $\max(i,k)\leq\min(j,\ell)$, then we say the \emph{meet} of the two roots exists, and define it by:
$$\alpha_{i,j}\meet \alpha_{k,\ell}:=\alpha_{\max(i,k),\min(j,\ell)}.$$
If the meet exists it is the maximal positive root smaller than both $\alpha_{i,j}$ and $\alpha_{k,\ell}$; otherwise no positive root is smaller than the two roots.
\end{enumerate}
\end{definition}

\begin{definition}\label{defn-tri-rec}
Let $A\subseteq\Phi^+$ be a subset.
\begin{enumerate}
\item The set $A$ is called a \emph{triangular} subset (see \cite{Fou}), if 
\begin{itemize}
\item[(R1)] for any $\alpha,\beta\in A$ such that $\supp(\alpha)\cup\supp(\beta)$ is a connected subset of $[n]$, we have $\alpha\join\beta\in A$; moreover, $\alpha\meet\beta\in A$ if it exists.
\end{itemize}
\item A triangular subset $A$ is called a \emph{rectangular} subset, if the following condition holds:
\begin{itemize}
\item[(R2)] if $\alpha,\beta\in\Phi^+$, $\alpha\meet\beta$ exists and $\alpha\join\beta, \alpha\meet\beta\in A$, then $\alpha,\beta\in A$.
\end{itemize}
\item A rectangular subset $A$ is said to be \emph{irreducible}, if the longest root $\theta$ in $\Phi^+$ is contained in $A$; otherwise it is \emph{reducible}.
\end{enumerate}
\end{definition}

\subsection{Structure of rectangular subsets}\label{Subsec:StructureOfRectangular}

We start by giving a characterisation of the rectangular subsets. The first result reduces this problem to irreducible ones.

\begin{proposition}\label{Prop:DecompRectangular}
Let $A\subseteq\Phi^+$ be a rectangular subset. Then there exist uniquely determined connected subsets $I_1,\cdots,I_r$ of $[n]$ such that
\begin{enumerate}
\item[(i)] any union of two or more of the subsets $I_1,\cdots,I_r$ is not connected;
\item[(ii)] $A_t:=A\cap\Phi_{I_t}^+$ is an irreducible rectangular subset of $\Phi_{I_t}^+$;
\item[(iii)] $A=A_1\sqcup A_2\sqcup\cdots\sqcup A_r$.
\end{enumerate}
\end{proposition}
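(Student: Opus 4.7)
The plan is to define $S := \bigcup_{\alpha \in A} \supp(\alpha) \subseteq [n]$ and let $I_1, \ldots, I_r$ be the connected components of $S$ inside $[n]$, i.e., the maximal intervals of $[n]$ contained in $S$. These are automatically pairwise disjoint and separated by at least one integer of $[n] \setminus S$, so no union of two or more is a connected subset of $[n]$, which gives (i). For (iii), each $\alpha \in A$ has connected support, and this interval lies in $S$, so it is contained in a unique $I_t$; hence $\alpha \in A_t$ and the $A_t$ partition $A$.

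The substantive step is the irreducibility half of (ii): writing $I_t = [a, b]$, I must show $\alpha_{a,b} \in A$. For each $i \in [a, b]$, pick some $\alpha^{(i)} \in A$ whose support contains $i$; since this support is a connected subset of $S$ meeting $I_t$, it is entirely contained in $I_t$. Now $\supp(\alpha^{(i)})$ and $\supp(\alpha^{(i+1)})$ are intervals containing the adjacent integers $i$ and $i+1$, so their union is again an interval, hence connected. Applying (R1) iteratively, the cumulative join $\beta := \alpha^{(a)} \join \alpha^{(a+1)} \join \cdots \join \alpha^{(b)}$ lies in $A$; its support is an interval of $I_t$ containing every element of $[a, b]$, hence equals $[a, b]$, so $\beta = \alpha_{a, b}$. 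This iterated join is the only non-trivial part of the argument and the main obstacle, since one must verify at each intermediate step that the running partial join still has support inside $I_t$ containing the previously processed indices, so the connectedness hypothesis of (R1) continues to hold.

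The rest of (ii), that $A_t$ inherits (R1) and (R2) inside $\Phi^+_{I_t}$, is routine: any two roots of $A_t$ already lie in $A \subseteq \Phi^+$, and any join, meet, or summand produced by applying (R1) or (R2) inside $A$ has support contained in the interval $I_t$ and therefore lies in $A_t$. For uniqueness, given any second decomposition $I'_1, \ldots, I'_s$ satisfying the conclusion, irreducibility of $A'_j := A \cap \Phi^+_{I'_j}$ implies that the longest root $\alpha_{I'_j}$ belongs to $A$, and every $\alpha \in A$ has support in exactly one $I'_j$, so $\bigsqcup_j I'_j = S$. Condition (i) applied to the primed decomposition forces the $I'_j$ to be pairwise non-adjacent in $[n]$, so they are precisely the connected components of $S$ and hence coincide with the $I_t$ up to relabeling.
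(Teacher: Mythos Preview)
Your proof is correct and follows essentially the same approach as the paper: both define the $I_t$ as the connected components of $\bigcup_{\alpha\in A}\supp(\alpha)$, verify (R1) and (R2) for each $A_t$ by restricting the corresponding property of $A$ and observing that supports stay inside $I_t$, and obtain irreducibility by taking an iterated join of roots covering each point of $I_t$. Your uniqueness argument (showing any alternative family must have union $S$ and, by condition~(i), must consist of its connected components) is a bit more explicit than the paper's one-line remark that the $I_t$ are the supports of the maximal elements of $A$, but the content is the same.
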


\begin{proof}
Let $I_1,\cdots,I_r$ be the connected components of the union of $\supp(\alpha)$ with $\alpha$ running through $A$.

We verify that $A_t$ is an irreducible rectangular subset of $\Phi_{I_t}^+$. We fix $1\leq t\leq r$ and verify (R1) and (R2).

Let $\alpha,\beta\in A_t$ be such that $\supp(\alpha)\cup\supp(\beta)$ is connected. Clearly $\alpha\join\beta\in A$ being $A$ rectangular. Moreover, by definition $\supp(\alpha),\supp(\beta)\subseteq I_t$, and $\alpha\join\beta\in\Phi_{I_t}^+$ follows by $\supp(\alpha\join\beta)=\supp(\alpha)\cup\supp(\beta)$. So $\alpha\meet\beta\in A\cap\Phi_{I_t}^+=A_t$.

If $\alpha\meet\beta$ exists, $\supp(\alpha\meet\beta)=\supp(\alpha)\cap\supp(\beta)\subseteq I_t$ implies $\alpha\meet\beta\in\Phi_{I_t}^+$. But we have also $\alpha\meet\beta\in A$ since $A$ is rectangular, so $\alpha\meet\beta\in A_t$. We have thus proved that $A_t$ fulfils (R1).

For (R2) let $\alpha,\beta\in\Phi^+$ be such that $\alpha\meet\beta$ exists and $\alpha\join\beta,\alpha\meet\beta\in A_t$. Since $A_t\subseteq A$ and $A$ is rectangular, $\alpha,\beta\in A$. Moreover, $\alpha\join\beta\in A_t$ implies $\supp(\alpha)\cup\supp(\beta)\subseteq I_t$, which gives $\supp(\alpha)$, $\supp(\beta)\subseteq I_t$, hence $\alpha,\beta\in A_t$. So also (R2) is fulfilled.

It remains to show the irreducibility of $A_t$: the longest root $\theta_t\in\Phi_{I_t}^+\cap A = A_t$. Let $I_t=\{r,r+1,\cdots,s\}$ with $r\leq s$. Then for any $r\leq k\leq s$ there exists $\beta_k\in A_t$ such that $k\in\supp(\beta_k)\subseteq I_t$. Since $I_t$ is a connected component, the triangularity of $A_t$ implies that 
$$\theta_t=\beta_r\join\beta_{r+1}\join\cdots\join\beta_s\in A_t.$$

The part (iii) is clear. Finally the subsets $I_1,I_2,\ldots,I_r$ are uniquely determined by $A$ since they are the supports of the maximal elements in $A$.
\end{proof}

Given $A\subseteq\Phi^+$, we define two subsets
$$A^-:=A\cap \{\alpha_{1,1},\alpha_{1,2},\cdots,\alpha_{1,n}\},\ \ A^+:=A\cap \{\alpha_{1,n},\alpha_{2,n},\cdots,\alpha_{n,n}\}.$$

The following lemma is clear.
\begin{lemma}\label{Lem:Aplusminus}
The following properties holds:
\begin{enumerate}
\item for any $\gamma\in\Phi^+$, there exists a unique $\alpha\in\{\alpha_{1,1},\cdots,\alpha_{1,n}\}$ and a unique $\beta\in\{\alpha_{1,n},\cdots,\alpha_{n,n}\}$ such that $\gamma=\alpha\meet\beta$;
\item if $\alpha,\beta\in A^-$ (resp. $A^+$), then $\alpha\meet\beta,\alpha\join\beta\in A^-$ (resp. $A^+$);
\item for $\alpha,\alpha'\in\Phi^+$, if $\alpha\meet\alpha'\in A^-$ (resp. $A^+$), then $\alpha,\alpha'\in A^-$ (resp. $A^+$);
\item for $\alpha\in A^-$, $\beta\in A^+$, we have $\alpha\join\beta=\theta$;
\item if $\gamma=\alpha\meet\beta$, $\gamma'=\alpha'\meet\beta'$ with $\alpha,\alpha'\in A^-$ and $\beta,\beta'\in A^+$, then
$$\gamma\join\gamma'=(\alpha\join\alpha')\meet(\beta\join\beta').$$
\end{enumerate}
\end{lemma}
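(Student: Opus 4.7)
The lemma is essentially a concatenation of explicit index manipulations, where the definitions of meet and join from Definition~\ref{defn-join-meet} reduce everything to taking minima and maxima of endpoints, together with two appeals to the axioms (R1) and (R2) of Definition~\ref{defn-tri-rec} for the parts that require $A$ to be rectangular. Throughout I would parametrize $A^-$ by roots of the form $\alpha_{1,j}$ (so their left endpoint is pinned to $1$) and $A^+$ by roots of the form $\alpha_{i,n}$ (so their right endpoint is pinned to $n$); the point is that these two linear chains are the ``boundaries'' of the staircase $\Phi^+$, and any positive root is uniquely the meet of its horizontal and vertical projections onto them.

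For part (i), I would simply observe that given $\gamma=\alpha_{i,j}$ the pair $\alpha=\alpha_{1,j}$, $\beta=\alpha_{i,n}$ has meet $\alpha_{\max(1,i),\min(j,n)}=\alpha_{i,j}$, and that any other choice in the prescribed chains would force different endpoints, so the pair is unique. Part (iv) is a one-line computation: $\alpha_{1,j}\join\alpha_{i,n}=\alpha_{\min(1,i),\max(j,n)}=\alpha_{1,n}=\theta$. For part (v), I would substitute $\alpha=\alpha_{1,a}$, $\alpha'=\alpha_{1,a'}$, $\beta=\alpha_{b,n}$, $\beta'=\alpha_{b',n}$ and check that both sides equal $\alpha_{\min(b,b'),\max(a,a')}$; this is the only part of the lemma with any nontrivial ``lattice identity'' flavor, but it is immediate once the formulas are written out.

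The two parts that actually use the hypothesis that $A$ is rectangular are (ii) and (iii). For (ii), if $\alpha,\beta\in A^-$ then both are of the form $\alpha_{1,\ast}$, so $\supp(\alpha)\cup\supp(\beta)$ is an initial segment of $[n]$, hence connected, and the meet exists; axiom (R1) then places $\alpha\join\beta$ and $\alpha\meet\beta$ in $A$, while the endpoint computation shows they remain in the chain starting at $1$, so they lie in $A^-$. The argument for $A^+$ is symmetric. For (iii), if $\alpha\meet\alpha'\in A^-$ then the left endpoint of the meet is $\max$ of the two left endpoints and equals $1$, forcing both $\alpha$ and $\alpha'$ to start at $1$; then axiom (R2) applied with $\alpha\join\alpha'$ (whose existence as an element of $\Phi^+$ is automatic) and $\alpha\meet\alpha'$ both in $A$ yields $\alpha,\alpha'\in A$, hence in $A^-$.

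I do not expect any real obstacle: the only subtle point is remembering, when invoking (R1) in part (ii), that the union of supports is automatically connected because both roots share the endpoint $1$ (or $n$), and when invoking (R2) in part (iii), that one needs $\alpha\join\alpha'$ to lie in $A$ as well, which follows from part (ii) applied to the pair of roots starting at $1$ that contain $\alpha$ and $\alpha'$ in their chain, or more directly from the fact that the meet already belonging to $A^-$ and the join being in the same chain can be handled by checking (R1) first for the join. After these small bookkeeping checks the lemma is indeed ``clear''.
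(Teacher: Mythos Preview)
The paper offers no proof of this lemma, declaring it ``clear''. Your treatment of parts (1), (4), and (5) via explicit index computations is correct and in the intended spirit.

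For part (2) you invoke axiom (R1), which presupposes that $A$ is triangular---an assumption the lemma does not make. No such hypothesis is needed: any two roots $\alpha_{1,j},\alpha_{1,k}$ in the chain $\{\alpha_{1,1},\ldots,\alpha_{1,n}\}$ are comparable, so their meet and join are simply the two roots themselves in some order, and hence trivially lie in $A^-$. The same holds for $A^+$.

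Part (3) contains a genuine gap. You correctly deduce that $\alpha,\alpha'$ both have left endpoint $1$, so they lie in the chain and are comparable; say $\alpha\leq\alpha'$. Then $\alpha\meet\alpha'=\alpha$ and $\alpha\join\alpha'=\alpha'$. To apply (R2) you would need $\alpha\join\alpha'=\alpha'\in A$, which is precisely the conclusion you are after. Your suggested patches via part (ii) or (R1) are circular for the same reason: both require $\alpha,\alpha'\in A$ as input. In fact statement (3), read literally, is \emph{false} even for rectangular $A$: for $n\geq 2$ take $A=\{\alpha_{1,1}\}$ (vacuously rectangular), $\alpha=\alpha_{1,1}$, $\alpha'=\alpha_{1,2}$; then $\alpha\meet\alpha'=\alpha_{1,1}\in A^-$ but $\alpha'\notin A^-$. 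The statement that is both true and ``clear'' is the one with $A^-$, $A^+$ replaced by the full chains $\{\alpha_{1,1},\ldots,\alpha_{1,n}\}$ and $\{\alpha_{1,n},\ldots,\alpha_{n,n}\}$: if $\alpha\meet\alpha'$ has left endpoint $1$ then so do $\alpha$ and $\alpha'$, since $\max(i,k)=1$ forces $i=k=1$. That is exactly what your first sentence in the argument for (iii) proves, and it appears to be what the authors intend.
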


The following result characterises the irreducible rectangular subsets by $A^-$ and $A^+$.

\begin{proposition}\label{proposition_irreducibleRectangular}
A subset $A\subseteq\Phi^+$ containing $\theta$ is irreducible rectangular if and only if 
$$A=\{\alpha\meet\beta\mid \alpha\in A^-,\ \beta\in A^+,\ \alpha\meet\beta\text{ exists}\}.$$
In particular, if $A$ is irreducible rectangular and $\alpha,\beta\in A$, then $\alpha\join\beta\in A$.
\end{proposition}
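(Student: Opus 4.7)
The plan is to denote the right-hand side by $B$ and prove the two inclusions $B\subseteq A$ and $A\subseteq B$ in the forward direction, then verify (R1), (R2) and irreducibility in the backward direction, leaning throughout on the factorization afforded by Lemma \ref{Lem:Aplusminus}(i) and the compatibility of $A^\pm$ with meets and joins from Lemma \ref{Lem:Aplusminus}(ii)--(v).

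For $B\subseteq A$, take $\alpha\in A^-$ and $\beta\in A^+$ with $\alpha\meet\beta$ existing. Then $\supp(\alpha)=[1,i]$ and $\supp(\beta)=[j,n]$ for some $i,j$, and existence of the meet forces $j\leq i$, so $\supp(\alpha)\cup\supp(\beta)=[1,n]$ is connected. Property (R1) then puts $\alpha\meet\beta$ in $A$. For $A\subseteq B$, take $\gamma\in A$ and use Lemma \ref{Lem:Aplusminus}(i) to factor $\gamma=\alpha\meet\beta$ uniquely with $\alpha\in\{\alpha_{1,1},\ldots,\alpha_{1,n}\}$ and $\beta\in\{\alpha_{1,n},\ldots,\alpha_{n,n}\}$. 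By Lemma \ref{Lem:Aplusminus}(iv) we have $\alpha\join\beta=\theta\in A$ (irreducibility) and $\alpha\meet\beta=\gamma\in A$, so (R2) yields $\alpha,\beta\in A$, hence $\alpha\in A^-$ and $\beta\in A^+$, as required.

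For the converse, assume $A=B$. Since $\theta\in A$ by assumption, $\theta$ lies in both $A^-$ and $A^+$, and $\theta=\theta\meet\theta\in B$, which gives irreducibility at once. To check (R1), write $\gamma=\alpha\meet\beta$ and $\gamma'=\alpha'\meet\beta'$ with $\alpha,\alpha'\in A^-$, $\beta,\beta'\in A^+$ using Lemma \ref{Lem:Aplusminus}(i). Lemma \ref{Lem:Aplusminus}(v) together with (ii) gives
\[
\gamma\join\gamma'=(\alpha\join\alpha')\meet(\beta\join\beta'),
\]
with $\alpha\join\alpha'\in A^-$ and $\beta\join\beta'\in A^+$, so $\gamma\join\gamma'\in B=A$; an analogous direct computation shows that if $\gamma\meet\gamma'$ exists then it equals $(\alpha\meet\alpha')\meet(\beta\meet\beta')$, again a meet of an element of $A^-$ with an element of $A^+$, so $\gamma\meet\gamma'\in A$. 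For (R2), suppose $\gamma\join\gamma',\gamma\meet\gamma'\in A$ with $\gamma\meet\gamma'$ existing. Factor each of these two as a meet with factors in $A^\pm$; the factorization indices can be recombined explicitly (e.g.\ taking the $A^-$ factor from one and the $A^+$ factor from the other) to exhibit both $\gamma$ and $\gamma'$ as meets of an element of $A^-$ with an element of $A^+$, placing them in $B=A$.

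The final assertion is then immediate: for $\alpha,\beta\in A$ with factorizations $\alpha=\alpha_1\meet\beta_1$, $\beta=\alpha_2\meet\beta_2$ ($\alpha_i\in A^-$, $\beta_i\in A^+$), Lemma \ref{Lem:Aplusminus}(v) gives $\alpha\join\beta=(\alpha_1\join\alpha_2)\meet(\beta_1\join\beta_2)$, whose meet exists because $\alpha\join\beta$ is a positive root, so this lies in $B=A$. The step I expect to require the most care is the (R2) verification in the backward direction: one has to track how the indices interact in the two non-trivial orderings of $(i,j)$ versus $(k,\ell)$, and in particular identify which factors of the factorizations of $\gamma\join\gamma'$ and $\gamma\meet\gamma'$ reassemble into factorizations of $\gamma$ and $\gamma'$ themselves; the remaining parts are formal consequences of Lemma \ref{Lem:Aplusminus}.
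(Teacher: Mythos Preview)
Your approach is essentially the same as the paper's, and the forward direction together with the (R1) check in the backward direction match it almost verbatim.

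The only substantive difference is your plan for (R2). You propose to factor the two \emph{given} elements $\gamma\join\gamma'$ and $\gamma\meet\gamma'$ of $A$ and then recombine indices to produce factorizations of $\gamma$ and $\gamma'$; as you anticipate, this forces a small case split on the relative order of the endpoints. The paper avoids this entirely: it factors $\gamma$ and $\gamma'$ themselves via Lemma~\ref{Lem:Aplusminus}(1) (with factors merely in $\{\alpha_{1,1},\ldots,\alpha_{1,n}\}$ and $\{\alpha_{1,n},\ldots,\alpha_{n,n}\}$, not a priori in $A^\pm$), observes that then $\gamma\meet\gamma'=(\alpha\meet\alpha')\meet(\beta\meet\beta')$, and invokes the \emph{uniqueness} in Lemma~\ref{Lem:Aplusminus}(1) to conclude $\alpha\meet\alpha'\in A^-$ and $\beta\meet\beta'\in A^+$. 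Lemma~\ref{Lem:Aplusminus}(3) then immediately gives $\alpha,\alpha'\in A^-$ and $\beta,\beta'\in A^+$, hence $\gamma,\gamma'\in A$. This route uses only the hypothesis $\gamma\meet\gamma'\in A$ (the hypothesis $\gamma\join\gamma'\in A$ is not needed), requires no recombination or case analysis, and is the cleaner way to finish your argument.
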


\begin{proof}
Assume that $A$ is irreducible rectangular. We first prove that all existing meets $\alpha\meet\beta$, with $\alpha\in A^-$, $\beta\in A^+$, are in $A$: indeed, let $\alpha:=\alpha_{1,k}$ and $\beta:=\alpha_{\ell,n}$ and suppose that $\alpha\meet\beta$ exists, then $\ell\leq k$ and $\supp(\alpha)\cup\supp(\beta)=[n]$ is connected so that $\alpha\meet\beta\in A$ by (R1).

On the other hand, take $\gamma:=\alpha_{i,j}\in A$; since $\theta=\alpha_{1,n}\in A$, by (R2) we have $\alpha_{1,j}\in A^-$ and $\alpha_{i,n}\in A^+$, hence $\gamma=\alpha_{1,j}\meet\alpha_{i,n}$ is a meet of an element of $A^-$ and an element of $A^+$.

Conversely, assume that $A$ consists of all the elements of the form $\alpha\meet\beta$ for $\alpha\in A^-$ and $\beta\in A^+$. We want to verify (R1) and (R2) for $A$.

Assume that $\gamma=\alpha\meet\beta$, $\gamma'=\alpha'\meet\beta'$ with $\alpha,\alpha'\in A^-$ and $\beta,\beta'\in A^+$. By Lemma~\ref{Lem:Aplusminus}~(5), 
$$
\gamma\join\gamma'=(\alpha\join\alpha')\meet(\beta\join\beta')\in A.
$$
Moreover, if the meet $\gamma\meet\gamma'$ exists, it equals $(\alpha\meet\alpha')\meet(\beta\meet\beta')$, which is in $A$. We have proved (R1).

For (R2), assume that $\gamma,\gamma'\in\Phi^+$ are such that $\gamma\meet\gamma'$ exists and is in $A$. By Lemma~\ref{Lem:Aplusminus}~(1), we may assume that $\gamma=\alpha\meet\beta$ and $\gamma'=\alpha'\meet\beta'$ with $\alpha,\alpha'\in\{\alpha_{1,1},\cdots,\alpha_{1,n}\}$ and $\beta,\beta'\in\{\alpha_{1,n},\cdots,\alpha_{n,n}\}$.

Then $\gamma\meet\gamma'=(\alpha\meet\alpha')\meet(\beta\meet\beta')$. This is an element of $A$ and moreover $\alpha\meet\alpha'\in\{\alpha_{1,1},\ldots,\alpha_{1,n}\}$ and $\beta\meet\beta'\in\{\alpha_{1,n},\ldots,\alpha_{n,n}\}$, so we find that $\alpha\meet\alpha'\in A^-$ and $\beta\meet\beta'\in A^+$ by the uniqueness property stated in Lemma~\ref{Lem:Aplusminus}~(1). So, by Lemma~\ref{Lem:Aplusminus}~(3), $\alpha,\alpha'\in A^-$ and $\beta,\beta'\in A^+$. Hence $\gamma=\alpha\meet\beta$ and $\gamma'=\alpha'\meet\beta'$ are elements of $A$.

The final statement of the Proposition follows at once by Lemma~\ref{Lem:Aplusminus}~(5) and (2).
\end{proof}

\subsection{Rectangular elements}\label{section_rectangularElements}

We define rectangular elements in a similar way to the definition of triangular elements in \cite[Proposition 1]{Fou}.

\begin{definition}\label{Def:Rectangular}
An element $\tau\in W$ is called rectangular if $N(\tau)$ is a rectangular subset of $\Phi^+$.
\end{definition}

\begin{example}\label{Example:rectangularSL4}
Let $n=3$ and $G=\SL_4(\mathbb{C})$. The elements $s_1s_3s_2$ and $s_1s_2s_3s_2s_1$ are not triangular hence not rectangular. The elements $s_1s_2s_3s_2$ and $s_1s_3s_2s_1$ are triangular but not rectangular. There are 22 triangular elements and 20 rectangular elements in $\mathfrak{S}_4$. Among the rectangular elements, there exists a unique element $s_1s_3$ which is not irreducible. 

These non-rectangular elements can be arranged into a diamond in the Bruhat order:
\[
\xymatrix{
	& s_1s_3s_2 \ar@{-}[dl]\ar@{-}[dr]\\
 s_1s_3s_2s_1 \ar@{-}[dr] & & s_1s_2s_3s_2 \ar@{-}[dl]\\
    & s_1s_2s_3s_2s_1
}
\]
\end{example}

\begin{example}\label{Ex:running1}
Let $n=4$ and $\tau=s_1s_2s_3s_4s_1s_2s_1\in W$. Then
$$N(\tau)=\{\alpha_1,\alpha_{1,2},\alpha_2,\alpha_{1,4},\alpha_{2,4},\alpha_{3,4},\alpha_4\}\text{ and }N(\tau^{-1})=\{\alpha_1,\alpha_{1,2},\alpha_2,\alpha_{1,3},\alpha_{2,3},\alpha_3,\alpha_{1,4}\}.$$
It is clear that both $\tau$ and $\tau^{-1}$ are triangular elements in $W$, but only $\tau$ is a rectangular element. Indeed, for $\alpha_{1,3},\alpha_{2,4}\in \Phi^+$, 
$\alpha_{1,3}\meet\alpha_{2,4}=\alpha_{2,3}$ and $\alpha_{1,3}\join\alpha_{2,4}=\alpha_{1,4}$ are contained in  $N(\tau^{-1})$, but $\alpha_{2,4}\notin N(\tau^{-1})$.
\end{example}

\begin{remark}
The Kempf elements studied in \cite{Fou}, while being triangular, are not necessarily rectangular, for example $s_1s_2s_3s_2$ is a Kempf element but it is not rectangular.
\end{remark}

The following proposition proves that the definition of rectangular elements in \ref{Subsec:mainResult} is equivalent to the above one.
\begin{proposition}\label{Prop:Rectangular}
An element $\tau\in W$ is rectangular if and only if: for any $1\leq i< k < j < \ell\leq n$+1 the following two conditions are equivalent
\begin{itemize}
\item[(i)] $\tau(i)>\tau(j)$ and $\tau(k)>\tau(\ell)$
\item[(ii)] $\tau(i)>\tau(\ell)$ and $\tau(k)>\tau(j)$.
\end{itemize}
\end{proposition}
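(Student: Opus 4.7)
The plan is to match the quadruple condition with the defining axioms (R1) and (R2) of a rectangular subset for $N(\tau)$. First I would set up the dictionary that a positive root $\alpha_{p,q}$ belongs to $N(\tau)$ if and only if $\tau(p) > \tau(q+1)$. Given $1 \leq i<k<j<\ell\leq n+1$, I would set $\alpha := \alpha_{i,j-1}$ and $\beta := \alpha_{k,\ell-1}$; since $i<k\leq j-1 < \ell-1$ these two roots overlap properly, with $\alpha\join\beta = \alpha_{i,\ell-1}$ and $\alpha\meet\beta=\alpha_{k,j-1}$ (the latter exists because $k \leq j-1$). Under the dictionary, condition (i) rewrites as $\{\alpha,\beta\}\subseteq N(\tau)$ and condition (ii) rewrites as $\{\alpha\join\beta,\alpha\meet\beta\}\subseteq N(\tau)$. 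As $(i,k,j,\ell)$ runs over all such quadruples, the pair $(\alpha,\beta)$ runs exactly over all pairs of \emph{properly overlapping} positive roots $\alpha_{a,b}, \alpha_{c,d}$ with $a<c\leq b<d$, via the bijection $i=a, k=c, j=b+1, \ell=d+1$.

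For the forward direction I would suppose $\tau$ is rectangular. For any properly overlapping $\alpha,\beta$, axiom (R1) applied to $\alpha,\beta\in N(\tau)$ (whose support union is the connected interval $[a,d]$) forces $\alpha\join\beta,\alpha\meet\beta\in N(\tau)$, giving (i)$\Rightarrow$(ii); symmetrically, axiom (R2) applied to $\alpha\join\beta,\alpha\meet\beta\in N(\tau)$ forces $\alpha,\beta\in N(\tau)$, giving (ii)$\Rightarrow$(i).

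For the converse I would assume the quadruple condition and verify (R1) and (R2) for $N(\tau)$ by classifying pairs $\alpha,\beta\in\Phi^+$ according to their support configuration. In the nested case both axioms are tautologies. In the touching case (supports $[a,b]$ and $[b+1,d]$) the meet does not exist, so only the join-part of (R1) needs checking; here $\alpha\join\beta=\alpha+\beta$, and the standard closure of $N(\tau)$ under root addition (in type $\typeA$, a sum of two negative roots that is again a root is itself negative) yields $\alpha\join\beta\in N(\tau)$. The only remaining case is properly overlapping roots, which is precisely the situation controlled by the quadruple condition via the dictionary of the first paragraph. The main delicate point will be getting that dictionary right --- in particular the observation that the strict inequalities $i<k<j<\ell$ correspond exactly to properly overlapping pairs, cleanly separating them from the nested and touching cases --- after which the case analysis closes routinely.
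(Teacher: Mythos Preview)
Your proposal is correct and follows essentially the same route as the paper's own proof: the same dictionary $\alpha_{p,q}\in N(\tau)\Leftrightarrow\tau(p)>\tau(q+1)$, the same substitution $\alpha=\alpha_{i,j-1}$, $\beta=\alpha_{k,\ell-1}$, and the same case split in the converse into nested (trivial), touching (handled by closure of $N(\tau)$ under root addition), and properly overlapping (controlled by the quadruple condition). The only cosmetic difference is that you name the three support configurations explicitly, whereas the paper runs through the inequalities $i=k$ or $j\geq\ell$, then $k=j$, then $k<j$.
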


\begin{proof}
The proof is a direct consequence of the following observation: 
$$\alpha_{i,j-1}\in N(\tau)\text{ if and only if }\tau(i)>\tau(j).$$

Assume that $\tau\in W$ is a rectangular element, we prove the equivalence of (i) and (ii). For $1\leq i<k< j<\ell\leq n+1$, we define $\alpha = \alpha_{i,j-1}$, $\beta=\alpha_{k,\ell-1}\in\Phi^+$. Moreover $\alpha\join\beta=\alpha_{i,\ell-1}$ and $\alpha\meet\beta=\alpha_{k,j-1}$. In any case $\supp(\alpha)\cup\supp(\beta)=\{i,i+1,\ldots,\ell-1\}$ is connected.

So the condition (i) is equivalent to $\alpha,\beta\in N(\tau)$; condition (ii) is equivalent to: $\alpha\join\beta,\alpha\meet\beta\in N(\tau)$. Since $\supp(\alpha)\cup\supp(\beta)$ is connected, (i) implies (ii) by (R1) in the definition of rectangular elements and (ii) implies (i) by (R2).

Now assume that (i) and (ii) are equivalent and we show that $\tau$ is rectangular. Let $\alpha=\alpha_{i,j-1}$ and $\beta=\alpha_{k,\ell-1}$ for some $1\leq i<j\leq n+1$ and $1\leq k<\ell\leq n+1$; by symmetry we can assume that $i\leq k$. Note that if $i=k$ or $j\geq\ell$ then (R1) and (R2) are obvious, so we may assume that $i<k$ and $j<\ell$.

We prove (R1). So let $\alpha,\beta\in N(\tau)$ with $\supp(\alpha)\cup\supp(\beta)$ connected; this last condition forces $k\leq j$. If $k=j$ then $\alpha\join\beta=\alpha+\beta\in N(\tau)$ and $\alpha\meet\beta$ does not exist; if $k<j$ then (i) is fulfilled by $\alpha,\beta\in N(\tau)$, hence (ii) holds, and $\alpha\join\beta,\alpha\meet\beta\in N(\tau)$.

Finally we prove (R2). Let $\alpha=\alpha_{i,j-1},\beta=\alpha_{k,\ell-1}\in\Phi$ with $i\leq k$ be such that $\alpha\meet\beta$ exists and $\alpha\join\beta,\alpha\meet\beta\in N(\tau)$. The existence of $\alpha\meet\beta$ implies $k<j$, so $1\leq k<j<\ell$ and (ii) is fulfilled. Then (i) holds and $\alpha,\beta\in N(\tau)$. 
\end{proof}

The previous proposition has the following two corollaries.

\begin{corollary}
If $\tau\in W$ is a non--rectangular permutation and, in particular, $1\leq i<k<j<\ell\leq n+1$ are such that the condition of Proposition \ref{Prop:Rectangular} is violated by $\tau$, then all permutations in the parabolic cosets $\tau\cdot(\mathfrak{S}_t\times\mathfrak{S}_{n+1-t})$, for all $k\leq t\leq j$, are non-rectangular.
\end{corollary}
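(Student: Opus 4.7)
The plan is to invoke Proposition \ref{Prop:Rectangular} and show that the failure of the equivalence of conditions (i) and (ii) at $(i, k, j, \ell)$ propagates to every element of the parabolic coset $\tau \cdot (\mathfrak{S}_t \times \mathfrak{S}_{n+1-t})$. The starting point is the block-invariance of left parabolic cosets: for any $\sigma = \tau \pi$ with $\pi \in \mathfrak{S}_t \times \mathfrak{S}_{n+1-t}$,
\[
\sigma(\{1, \ldots, t\}) = \tau(\{1, \ldots, t\}) \quad \text{and} \quad \sigma(\{t+1, \ldots, n+1\}) = \tau(\{t+1, \ldots, n+1\}),
\]
because $\pi$ permutes positions only within each block.

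First I would exploit this invariance in the generic situation $k \leq t < j$, in which the four indices split as $i, k \in \{1, \ldots, t\}$ and $j, \ell \in \{t+1, \ldots, n+1\}$. The block invariance produces positions $i' < k'$ in the first block and $j' < \ell'$ in the second block with $\{\sigma(i'), \sigma(k')\} = \{\tau(i), \tau(k)\}$ and $\{\sigma(j'), \sigma(\ell')\} = \{\tau(j), \tau(\ell)\}$. A direct check then shows that swapping the two values in either pair interchanges the conditions (i) and (ii) of Proposition \ref{Prop:Rectangular} at the chosen four-tuple. Thus the assertion ``exactly one of (i), (ii) holds'' is invariant under the Klein four-group generated by the two swaps, and the violation at $\tau$ at $(i, k, j, \ell)$ transfers to a violation at $\sigma$ at $(i', k', j', \ell')$, showing that $\sigma$ is non-rectangular.

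The main obstacle is the endpoint $t = j$, at which three of the indices $i, k, j$ collapse into the first block and only $\ell$ remains in the second. Here $\pi$ can permute the three values $\tau(i), \tau(k), \tau(j)$ arbitrarily, producing six configurations instead of the four handled above, and the Klein-group argument no longer applies directly to the original indices. Resolving this case will presumably require a supplementary argument that locates an alternative forbidden four-tuple in $\sigma$, for instance by incorporating other values $\tau(m)$ with $m \in \{1, \ldots, t\} \setminus \{i, k, j\}$ whose positions are shuffled by $\pi$ relative to the initial four. The symmetric endpoint $t = k$ already falls into the $2+2$ regime and is handled by the main argument.
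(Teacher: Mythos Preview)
Your argument for the range $k\leq t\leq j-1$ is correct and is exactly the paper's argument: the four values $\tau(i),\tau(k),\tau(j),\tau(\ell)$ reappear in $\sigma$ at positions $i'<k'$ in the first block and $j'<\ell'$ in the second, the only possible change being a swap within each pair, and each such swap exchanges conditions (i) and (ii) of Proposition~\ref{Prop:Rectangular}. The paper phrases this as ``a possible swapping of $a_i$ and $a_k$ and a possible swapping of $a_j$ and $a_\ell$'', which is precisely your Klein four-group observation.

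You are also right that the endpoint $t=j$ is genuinely different, but your expectation that a supplementary argument will rescue it is wrong: the statement is \emph{false} at $t=j$. Take $n+1=4$ and $\tau=2413$, with $(i,k,j,\ell)=(1,2,3,4)$; here (i) holds and (ii) fails, so the condition of Proposition~\ref{Prop:Rectangular} is violated. For $t=j=3$ the coset $\tau\cdot(\mathfrak{S}_3\times\mathfrak{S}_1)$ consists of all permutations whose first three values are a rearrangement of $\{1,2,4\}$ and whose last value is $3$; among these, $1423$ (and also $1243$, $2143$, $4123$) is rectangular by Corollary~\ref{Corollary:patternAvoiding}. So no argument at $t=j$ can succeed.

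The paper's own proof in fact only treats $k\leq t\leq j-1$: the sentence about swapping $a_j$ and $a_\ell$ presupposes that both lie in the second block, i.e.\ that $t<j$. The upper bound $t\leq j$ in the statement therefore appears to be a slip for $t\leq j-1$; with that correction your proof is complete and coincides with the paper's.
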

\begin{proof}
Let $a_h=\tau(h)$, for $h=1,2,\ldots,n+1$, and let ${\bf a}=a_1a_2\cdots a_{n+1}$ be the word representation of $\tau$. The word representation ${\bf a}'$ of a permutation $\tau'=\tau\eta$ with $\eta\in\mathfrak{S}_t\times\mathfrak{S}_{n+1-t}$ for a fixed $k\leq t\leq j$, is the word ${\bf a}$ up to a shuffling of the first $t$ entries and a shuffling of the last $n+1-t$ entries. In particular this may result in a possible swapping of $a_i$ and $a_k$ and a possible swapping of $a_j$ and $a_\ell$ in ${\bf a}'$. But any such swapping just exchange condition (i) and (ii) in Proposition \ref{Prop:Rectangular}, so $\tau'$ is still non-rectangular.
\end{proof}

\begin{corollary}\label{Corollary:patternAvoiding}
A permutation is rectangular if and only if it avoids the four patterns $2413$, $2431$, $4213$ and $4231$. The number $r_n$ of rectangular elements in $\mathfrak{S}_n$ is given by the recursively defined sequence: $r_1 = 1$, $r_2 = 2$ and $r_n = 4r_{n-1} - 2 r_{n-2}$ for any $r\geq 3$ (see entry {\tt A006012} in {\tt OEIS}, \cite{oeisRectangular}).
\end{corollary}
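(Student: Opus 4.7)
The pattern characterization follows by direct case analysis from Proposition~\ref{Prop:Rectangular}. By that proposition, $\tau$ fails to be rectangular iff there exist $1\leq i<k<j<\ell\leq n+1$ at which exactly one of conditions (i) and (ii) holds. If (i) holds and (ii) fails, then either $\tau(i)<\tau(\ell)$ or $\tau(k)<\tau(j)$; combining the first with (i) forces the linear order $\tau(j)<\tau(i)<\tau(\ell)<\tau(k)$ (pattern $2413$ at positions $(i,k,j,\ell)$), while the second yields $\tau(\ell)<\tau(k)<\tau(j)<\tau(i)$ (pattern $4231$). A short check shows the two sub-cases cannot occur simultaneously, since their conjunction with (i) produces a cyclic chain of strict inequalities. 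Dually, when (ii) holds and (i) fails one recovers, by exactly the same argument, the patterns $2431$ and $4213$. Conversely, any quadruple $p_1<p_2<p_3<p_4$ realizing one of the four forbidden patterns provides indices $(i,k,j,\ell)=(p_1,p_2,p_3,p_4)$ witnessing the non-equivalence of (i) and (ii).

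For the recurrence, I use the structural description of rectangular subsets together with the standard bijection $\tau\mapsto N(\tau)$ between $\mathfrak{S}_{n+1}$ and biclosed subsets of $\Phi^+$. By Proposition~\ref{Prop:DecompRectangular} a rectangular subset $A=N(\tau)$ splits uniquely as a disjoint union of irreducible pieces $A_t$ supported on disjoint intervals $I_t\subseteq [n]$ separated by at least one empty position; by Proposition~\ref{proposition_irreducibleRectangular} each $A_t$ is encoded by subsets $S_t,T_t\subseteq I_t$ containing the rightmost, resp.\ leftmost, endpoint of $I_t$, via $A_t=\{\alpha_{k,j}\mid k\in T_t,\ j\in S_t,\ k\leq j\}$.

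A direct check, leveraging the mandatory gap between consecutive components, reduces the biclosedness of $A$ to a purely local condition on each piece $A_t$: closure under root addition matches axiom (R1), while the only non-trivial co-closure requirement becomes, for every $j$ interior to $I_t$, the disjunction $j\in S_t$ or $j+1\in T_t$. The $m-1$ such constraints on an interval $I_t$ of length $m$ couple pairwise disjoint pairs of binary choices, each rejecting exactly one of four joint values, so the number of admissible $(S_t,T_t)$ equals $3^{m-1}$. Summing over valid interval decompositions of $[n]$, weighting each block of length $m$ by $3^{m-1}$ and each empty slot by $1$, a standard generating-function manipulation yields
\[
\sum_{n\geq 0} r_{n+1}\,x^n \;=\; \frac{1-2x}{1-4x+2x^2},
\]
from whose denominator one reads off the recurrence $r_n=4r_{n-1}-2r_{n-2}$ for $n\geq 3$; the initial values $r_1=1$ and $r_2=2$ are immediate by inspection.

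The main technical obstacle is the biclosedness translation: verifying that the global biclosed condition on $A$ collapses precisely to the simple local requirement on each irreducible component, and that no additional cross-component constraint survives thanks to the gaps separating the $I_t$. Once this translation is in place, both halves of the corollary follow from routine combinatorics.
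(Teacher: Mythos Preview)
Your argument for the pattern-avoidance characterisation is essentially the paper's own: being rectangular is a condition on the relative order of any four entries, so it is determined by the behaviour on $\mathfrak{S}_4$, where the non-rectangular permutations are exactly $2413,\,2431,\,4213,\,4231$. Your explicit case split just unpacks this.

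For the enumeration, your route is genuinely different from the paper's. The paper does not count directly: it quotes a result of Biers--Ariel that the recurrence enumerates permutations avoiding $1324,\,1423,\,2314,\,2413$, and then observes that the involution $\tau\mapsto i(\tau)$ (reverse of the one-line notation of $\tau^{-1}$) carries that pattern class bijectively to the rectangular class. Your approach instead enumerates biclosed rectangular subsets via Propositions~\ref{Prop:DecompRectangular} and~\ref{proposition_irreducibleRectangular}: an irreducible piece on an interval of length $m$ is encoded by a pair $(S_t,T_t)$, co-closure becomes the $m-1$ independent constraints ``$j\in S_t$ or $j{+}1\in T_t$'' for $j$ ranging over all but the right endpoint of $I_t$ (your word ``interior'' is slightly off here, but your count $m-1$ and the factor $3^{m-1}$ are correct), and a routine alternating-run generating function yields $\frac{1-2x}{1-4x+2x^2}$. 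This is more self-contained than the paper's proof---no external enumeration result is invoked---at the cost of the extra verification you flag: one must check that the converse of Proposition~\ref{Prop:DecompRectangular} holds (a disjoint union of irreducible rectangular pieces on gap-separated intervals is again rectangular) and that global biclosedness really collapses to the local constraint, both of which are straightforward. The paper's argument is shorter but dependent on the cited reference; yours exploits the structural results already developed in the paper and would be worth recording as an alternative.
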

\begin{proof}
By Proposition \ref{Prop:Rectangular}, being rectangular is a property of the subwords of length $4$ of the one-line notation of a permutation; in particular, this property is expressed by some inequalities. Hence it may be expressed as a condition about patterns of length $4$. But the four patterns of the claim are the one-line notations of the four non-rectangular elements of $\mathfrak{S}_4$ as in Example \ref{Example:rectangularSL4}, so avoiding them is equivalent to being rectangular.

Now we prove the second statement about the number of rectangular permutations. As it is proved in \cite[Theorem~8]{biersAriel}, the recursively defined integer sequence as in the statement counts the number of permutations in $\mathfrak{S}_n$ avoiding the four patterns $1324$, $1423$, $2314$ and $2413$.

For a permutation $\tau$ let $i(\tau)$ be the permutation having as one-line notation the backward reading of the one-line notation of $\tau^{-1}$; the map $\tau\longmapsto i(\tau)$ is an involutive bijection of $\mathfrak{S}_n$. Moreover, if $\tau$ avoids the pattern $\sigma$ then $i(\tau)$ avoids the pattern $i(\sigma)$. This completes the proof since the patterns $2413$, $2431$, $4213$ and $4231$ are obtained by the patterns $1324$, $1423$, $2314$ and $2413$ by applying $i$.
\end{proof}

For completeness we prove that also triangular elements are characterized by pattern avoidance.
\begin{proposition}\label{Proposition:triangularAvoiding}
A permutation is triangular if and only if it avoids the two patterns $4231$ and $2413$. The number of triangular elements in $\mathfrak{S}_n$ is counted by {\tt A032351} in {\tt OEIS}, \cite{oeisTriangular}. 
\end{proposition}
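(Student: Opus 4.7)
The plan is to translate the triangular inequality directly into pattern avoidance, in the same spirit as the proof of Corollary \ref{Corollary:patternAvoiding} but with only two forbidden patterns. The first observation is that the defining condition is vacuous when $k=j$ in $i<k\leq j<\ell$, since then $\tau(i)>\tau(j)=\tau(k)>\tau(\ell)$ already forces $\tau(i)>\tau(\ell)$ and $\tau(k)\geq\tau(j)$ for free. Therefore the condition is genuinely tested only on strict chains $i<k<j<\ell$, which is exactly the index set of a length-four pattern occurrence.

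First I would show that containing $2413$ or $4231$ forces $\tau$ to fail triangularity. If positions $i_1<i_2<i_3<i_4$ realise a $2413$ pattern in $\tau$, then by definition $\tau(i_3)<\tau(i_1)<\tau(i_4)<\tau(i_2)$; setting $(i,k,j,\ell):=(i_1,i_2,i_3,i_4)$ the hypothesis of the triangular condition is satisfied, while $\tau(i)<\tau(\ell)$ violates its conclusion. Analogously, a $4231$ occurrence gives the chain $\tau(i_4)<\tau(i_2)<\tau(i_3)<\tau(i_1)$, verifying the hypothesis but falsifying $\tau(k)\geq\tau(j)$. Hence every triangular permutation avoids both patterns.

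For the converse, assume $\tau$ is not triangular and fix $i<k<j<\ell$ witnessing the failure; the two ways in which the conclusion can break split the argument into two cases. If $\tau(i)<\tau(\ell)$, then combined with $\tau(i)>\tau(j)$ and $\tau(k)>\tau(\ell)$ one reads off the chain $\tau(j)<\tau(i)<\tau(\ell)<\tau(k)$, whose relative order at positions $(i,k,j,\ell)$ is the pattern $2413$. If instead $\tau(k)<\tau(j)$, one obtains $\tau(\ell)<\tau(k)<\tau(j)<\tau(i)$, giving the pattern $4231$. The enumeration statement is then a direct reference to the integer sequence {\tt A032351} in the {\tt OEIS}.

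I do not expect any real obstacle here: the argument is a direct unpacking of the triangular inequality, and the only care needed is to isolate the vacuous case $k=j$ before invoking the dictionary between inequalities and length-four patterns.
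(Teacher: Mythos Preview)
Your pattern-avoidance argument is correct and is essentially the paper's approach made explicit: the paper also remarks that the case $k=j$ is vacuous and then, as in the proof of Corollary~\ref{Corollary:patternAvoiding}, reads the triangular condition as a requirement on length-four subwords. Your case analysis for the two directions is exactly what is implicit there.

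There is, however, a genuine gap in the enumeration part. The sequence \texttt{A032351} counts the smooth permutations, namely those avoiding the pair $\{3412,4231\}$; it does not \emph{directly} enumerate the $\{2413,4231\}$-avoiders, so a bare reference to the OEIS entry is not a proof. The paper fills this in with a short Wilf-equivalence argument: taking inverses sends $\{2413,4231\}$-avoidance to $\{3142,4231\}$-avoidance (because $2413^{-1}=3142$ and $4231^{-1}=4231$), and then a result of B\'ona \cite{bona} gives a bijection between $\{3142,4231\}$-avoiders and $\{3412,4231\}$-avoiders. You need to supply this chain (or an equivalent one) before the reference to \texttt{A032351} is legitimate.
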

\begin{proof}
The characterization of triangular elements as the permutations avoiding the two patterns $4231$ and $2413$ is proved as in the previous Corollary \ref{Corollary:patternAvoiding}; indeed note that $k\leq j$ is equivalent to $k<j$ in the definition of triangular element since for $k=j$ the condition there is always true.

Now we prove the statement about the number of triangular elements. The permutations avoiding the patterns $4231$, $2413$ are the same number of the permutations avoiding the patterns $4231$ and $3142$ by taking the inverse. This second set is in bijection with the set of permutations avoiding $3412$ and $4231$ by \cite[End of Section 1]{bona}. 
These are the smooth permutations counted by {\tt A032351} in {\tt OEIS}.
 \end{proof}

\section{From $\SL_{n+1}$ to $\SL_{2n}$}\label{Sec:DoubleRank}

Recall that $\wt{G}:=\SL_{2n}$, and $\wt{\Phi}$ (resp. $\wt{\Phi}^+$, $\wt{\Phi}^-$, $\wt{W}$, $\wt{\Lambda}^+$, etc...) is the root system (resp. set of positive roots, set of negative roots, Weyl group, weight lattice, etc...) of $\wt{G}$.

\subsection{Special elements in the Weyl group of $\SL_{2n}$}

The following subset of positive roots will play a central role in what follows

$$\nnabla=\big\{\talpha_{i,j}\mid 1\leq i\leq n\leq j\leq 2n-1,\,j-i\leq n-1\big\}.$$

Let $\myleq$ be a total order on $\tPhi^+$ such that for any $\alpha,\beta\in\tPhi^+$, $\height(\alpha) \leq \height(\beta)$ implies $\alpha\myleq\beta$. For a subset $A=\{\beta_r\myleq\beta_{r-1}\myleq\cdots\myleq\beta_1\}$ of $\tPhi^+$, we define
\[
\tau_A = s_{\beta_1} s_{\beta_2} \cdots s_{\beta_r}\in\tW\simeq\mathfrak{S}_{2n}.
\]

The following lemma will be repeatedly used in subsequent proofs.

\begin{lemma}\label{lemma_nablaKey}
If $\alpha = \talpha_{n-k+1,n+h-1}$ and $\beta=\talpha_{n-v+1,n+u-1}$ are elements of $\nnabla$ then
\[
s_\beta(\alpha) =
\left\{
\begin{array}{ll}
\alpha & \textrm{if }k\neq v\textrm{ and }h\neq u\\
\alpha-\beta & \textrm{if }(k=v\textrm{ and } h\neq u) \textrm{ or }(k\neq v \textrm{ and } h=u)\\
-\alpha & \textrm{if }k=v \textrm{ and }h=u.\\
\end{array}
\right.
\]
\end{lemma}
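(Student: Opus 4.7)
The plan is to pass to the standard embedding of type $\typeA$ roots and compute the Cartan pairing $\pair{\alpha}{\beta}$ directly, then read off the result from $s_\beta(\alpha) = \alpha - \pair{\alpha}{\beta}\,\beta$.

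First I would realise the roots via the standard identification $\talpha_{i,j} = e_i - e_{j+1}$, where $e_1,\dots,e_{2n}$ is the standard orthonormal basis of $\R^{2n}$. Under this identification
\[
\alpha = e_{n-k+1} - e_{n+h}, \qquad \beta = e_{n-v+1} - e_{n+u}.
\]
Since every root in type $\typeA$ has squared length $2$, the Cartan pairing $\pair{\alpha}{\beta}$ coincides with the Euclidean inner product of $\alpha$ and $\beta$. A direct expansion gives
\[
\pair{\alpha}{\beta} = \delta_{n-k+1,\,n-v+1} - \delta_{n-k+1,\,n+u} - \delta_{n+h,\,n-v+1} + \delta_{n+h,\,n+u}.
\]

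The key step is then to observe that the two mixed Kronecker deltas vanish identically under the assumption $\alpha,\beta\in\nnabla$. Indeed, $\delta_{n-k+1,n+u}=1$ would force $u = 1-k$, which is impossible since $k,u\geq 1$; analogously $\delta_{n+h,n-v+1}=1$ would force $h+v = 1$, again impossible. This is precisely where membership in $\nnabla$ is used: the defining conditions $i\leq n\leq j$ guarantee that a ``left'' index can never coincide with a ``right'' index. Hence the expression collapses to $\pair{\alpha}{\beta} = \delta_{k,v} + \delta_{h,u}\in\{0,1,2\}$.

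Finally I would substitute into the reflection formula and split into cases according to the value of the pairing. If $k\neq v$ and $h\neq u$ the pairing is $0$ and $s_\beta(\alpha) = \alpha$; if exactly one of $k=v$ or $h=u$ holds the pairing is $1$ and $s_\beta(\alpha) = \alpha - \beta$; if both equalities hold then $\alpha=\beta$ and $s_\beta(\alpha) = -\alpha$. I do not anticipate any real obstacle: the argument is essentially bookkeeping, and the parametrisation $(n-k+1,\,n+h-1)$ together with the bound $j-i\leq n-1$ is designed precisely so that the two endpoints of a root in $\nnabla$ decouple into two independent comparisons, making the three cases in the statement fall out immediately.
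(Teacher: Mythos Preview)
Your argument is correct. Both your proof and the paper's compute the Cartan integer $\pair{\alpha}{\beta}$ and show it lies in $\{0,1,2\}$ with the stated case distinction, so the overall shape is the same; the only difference is in how the value $-1$ is excluded. You work in coordinates $\talpha_{i,j}=e_i-e_{j+1}$ and observe that the two ``mixed'' Kronecker deltas vanish because a left index $n-k+1\leq n$ can never equal a right index $n+u\geq n+1$. The paper instead argues intrinsically: for any $\gamma,\delta\in\nnabla$ the sum $\gamma+\delta$ would have coefficient $2$ at $\talpha_n$ and hence is never a root, which in type $\typeA$ rules out $\pair{\alpha}{\beta}=-1$. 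Your coordinate computation is perhaps more transparent for a reader unfamiliar with root strings, while the paper's formulation immediately yields the useful structural fact (exploited later) that $\nnabla$ spans an abelian subalgebra.
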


\begin{proof}
First notice that for any two roots $\gamma,\delta\in\nnabla$, $\gamma+\delta$ is not a root, since otherwise it would have a coefficient $2$ in front of $\alpha_n$ when it is written into simple roots. 

Let $a := \pair{\alpha}{\beta}$. Since $\wt{G}$ is of type $\typeA$, the possible values of $a$ are $0,\pm 1$ and $2$. The above argument shows that $a\neq -1$. The lemma follows from the following observations:
\begin{enumerate}
\item $a=2$ if and only if $\alpha=\beta$, \textit{i.e.}, $k=v$ and $h=u$;
\item if $\alpha\neq\beta$, $a=0$ if and only if $k\neq v$, $h\neq u$.
\end{enumerate}
\end{proof}

A first consequence of this lemma is the following proposition showing that the order $\myleq$ is not essential in the definition of $\tau_A$.

\begin{proposition}\label{proposition_orderIndependence}
Let $A\subseteq\nnabla$ be a subset. Then $\tau_A$ is independent of the total order $\myleq$, once it refines the height function.
\end{proposition}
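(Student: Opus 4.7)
The plan is as follows. Any two total orders $\myleq, \myleq'$ on $\tPhi^+$ that refine the height function induce the same relative order on pairs of roots of different height; hence, when restricted to $A$, they differ only by permutations that respect the partition of $A$ into level sets of $\height$. Since such permutations are generated by adjacent transpositions within a single level set, it suffices to show that for any two distinct roots $\alpha,\beta\in A\subseteq\nnabla$ with $\height(\alpha)=\height(\beta)$, the reflections $s_\alpha$ and $s_\beta$ commute; then an adjacent swap of equal-height factors in the word $s_{\beta_1}s_{\beta_2}\cdots s_{\beta_r}$ does not change the product, and the claim follows.

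To establish the commutation, write $\alpha=\talpha_{n-k+1,n+h-1}$ and $\beta=\talpha_{n-v+1,n+u-1}$ with $k,h,v,u\in\{1,\ldots,n\}$. The height of $\talpha_{n-k+1,n+h-1}$ equals $h+k-1$, so the equal-height hypothesis reads $h+k=u+v$. If $k=v$ then this forces $h=u$, hence $\alpha=\beta$; symmetrically if $h=u$ then $k=v$ and again $\alpha=\beta$. Since we assumed $\alpha\neq\beta$, we must have $k\neq v$ and $h\neq u$. Applying Lemma \ref{lemma_nablaKey} in this case gives $s_\beta(\alpha)=\alpha$, which means $\pair{\alpha}{\beta}=0$. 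In the simply laced type $\typeA$ root system, this orthogonality immediately yields $s_\alpha s_\beta=s_\beta s_\alpha$, as required.

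The main conceptual point is recognizing that the height-refining condition pins down the ordering modulo permutations within level sets, reducing the statement to a pairwise commutation check. The substantive content is then automatic from Lemma \ref{lemma_nablaKey}: equal height together with distinctness inside $\nnabla$ forces $k\neq v$ and $h\neq u$, which is exactly the orthogonality case of that lemma. No deeper difficulty is expected, as all the work has been done in the preparatory lemma.
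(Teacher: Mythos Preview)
Your proof is correct and follows essentially the same approach as the paper: reduce to showing that distinct equal-height roots in $\nnabla$ are orthogonal, then use $h+k=u+v$ together with Lemma~\ref{lemma_nablaKey} to conclude. You simply spell out in more detail the reduction step that the paper compresses into ``it suffices to show.''
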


\begin{proof} 
It suffices to show that for $\alpha\neq\beta\in\nnabla$ with $\height(\alpha)=\height(\beta)$, $\pair{\alpha}{\beta}=0$. We take $\alpha = \talpha_{n-k+1,n+h-1}$ and $\beta = \talpha_{n-v+1,n+u-1}$, then 
$$h+k-1 = \height(\alpha) = \height(\beta) = u+v-1.$$ 
Therefore $k\neq v$ is equivalent to $h\neq u$, and then $\pair{\alpha}{\beta}=0$ by Lemma \ref{lemma_nablaKey}.
\end{proof}

\begin{example}\label{Ex:running2}
\begin{enumerate}
\item Let $n=4$ and $A=\{\talpha_4,\talpha_{3,4},\talpha_{4,5},\talpha_{2,4},\talpha_{3,5},\talpha_{4,6},\talpha_{4,7}\}\subseteq \tPhi^+$. Then $\tau_A=s_4s_5s_6s_7s_3s_4s_2\in \tW$.
\item Let $A=\nnabla$. It is easy to see by induction that
$$\tau_A=(s_ns_{n+1}\cdots s_{2n-1})(s_{n-1}s_{n-2}\cdots s_{2n-3})\cdots (s_2s_3)s_1.$$
\end{enumerate}
\end{example}

\subsection{$\nnabla$-ideals}\label{section_nablaIdeal}

\begin{definition}
A subset $A\subseteq\nnabla$ is called a $\nnabla$--\emph{ideal}, if $\alpha\in A$, $\beta\in\nnabla$ and $\beta\leq\alpha$ imply that $\beta\in A$. 
\end{definition}

In the following we fix a $\nnabla$-ideal $A$. In order to investigate the properties of the element $\tau_A$, we introduce some combinatorial data attached to $A$. 

\begin{enumerate}
\item For $1\leq k\leq n$, we define $c_k(A)$ either as the maximum $h$ such that $\talpha_{n-k+1,n+h-1}$ is in $A$, or as $0$ if no such element is in $A$;
\item For $1\leq h \leq n$, we define $r_h(A)$ either as the maximum $k$ such that $\talpha_{n-k+1,n+h-1}$ is in $A$, or as $0$ if no such element is in $A$.
\end{enumerate} 

Both sequences are non-increasing: $c_1(A)\geq c_2(A)\geq \cdots \geq c_n(A)$ and $r_1(A)\geq r_2(A)\geq\cdots\geq r_n(A)$.

When the set $A$ is clear from the context, we will drop it from the notation.

\begin{example}\label{Ex:running3}
In the following picture we arrange the positive roots as the corresponding root subspaces in a matrix of the special linear group. The elements of $\tPhi^+$ not in $\nnabla$ are illustrated by circles; the elements of $\nnabla$ are labelled by squares. We fix a $\nnabla$--ideal $A\subseteq\tPhi^+$ as in Example \ref{Ex:running2} (1), whose elements are depicted as black squares. 

Notice that $\alpha\leq\beta$ if and only $\alpha$ is to the left and/or down with respect to $\beta$.
 
\[
\xymatrix@!C=0pt@!R=0pt{
 \circ & \circ & \circ & \square & \circ & \circ & \circ\\
      &\circ & \circ & \blacksquare & \square & \circ & \circ\\
      &        &\circ & \blacksquare & \blacksquare & \square & \circ\\
      &       &        &\blacksquare & \blacksquare & \blacksquare & \blacksquare\\
      &       &       &        &\circ & \circ & \circ\\
      &       &       &       &        &\circ & \circ\\
      &       &       &       &       &        &\circ\\
}
\]

In the above example we have: $c_1=4$, $c_2=2$, $c_3=1$, $c_4=0$, $r_1=3$, $r_2=2$, $r_3=1$ and $r_4=1$; these numbers can be easily deduced by the picture counting the numbers of black squares in $A$ for each row and each column in $\nnabla$.
\end{example}

Our next aim is to find a formula for $\tau_A^{-1}$, when restricted to the elements of $\nnabla$.

\begin{proposition}\label{proposition_tauInverseFormula}
Let $A$ be a $\nnabla$--ideal. We have the following formula for the restriction of $\tau_A^{-1}$ to $\nnabla$:
\[
\nnabla\ni\alpha=\talpha_{n-k+1,n+h-1}\,\longmapsto\,
\left\{
\begin{array}{ll}
-\talpha_{n+h-r_h,n-k+c_k} & \textrm{if }\alpha\in A,\\
\talpha_{n-k+c_k+1,n+h-r_h-1} & \textrm{if }\alpha\not\in A.\\
\end{array}
\right.
\]
\end{proposition}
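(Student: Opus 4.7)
The plan is to reformulate the proposition as an identity about the one-line notation of $\tau_A^{-1}$ as an element of $\mathfrak{S}_{2n}$. The key observation is that for $\beta=\talpha_{n-v+1,n+u-1}\in\nnabla$, the reflection $s_\beta$ is exactly the transposition $(n-v+1,\,n+u)$, which swaps an index in $\{1,\ldots,n\}$ with one in $\{n+1,\ldots,2n\}$. Since $\talpha_{n-k+1,n+h-1}=e_{n-k+1}-e_{n+h}$, the proposition reduces to proving the two permutation-level formulas
\[
\tau_A^{-1}(n-k+1)=n-k+c_k+1,\qquad \tau_A^{-1}(n+h)=n+h-r_h,
\]
valid for every $1\leq k,h\leq n$ independently (no constraint $k+h\leq n+1$ is needed at this level).

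I would prove this by induction on $|A|$; the empty ideal case is immediate since all statistics vanish and $\tau_A=\mathrm{id}$. For the inductive step, choose $\beta\in A$ of maximum height. Any $\beta'>\beta$ would have strictly larger support and hence strictly larger height, so $\beta$ is partial-order maximal in $A$; consequently $A':=A\setminus\{\beta\}$ remains a $\nnabla$-ideal. Writing $\beta=\talpha_{n-K+1,n+H-1}$, the only statistics that change are $c_K(A')=c_K(A)-1=H-1$ and $r_H(A')=r_H(A)-1=K-1$, with all other $c_k$ and $r_h$ unchanged; here the ideal property ensures $\talpha_{n-K+1,n+H-2}\in A'$ when $H\geq 2$. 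By Proposition~\ref{proposition_orderIndependence} we may refine $\myleq$ so that $\beta=\beta_1$, giving $\tau_A=s_\beta\,\tau_{A'}$ and therefore $\tau_A^{-1}(i)=\tau_{A'}^{-1}(s_\beta(i))$ for every $i\in\{1,\ldots,2n\}$.

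A short case analysis completes the induction. If $s_\beta$ fixes $i$, the formula at $i$ for $A$ follows immediately from the inductive hypothesis since the relevant statistic is unchanged. If $i=n-K+1$, then $s_\beta(i)=n+H$, and the inductive hypothesis gives $\tau_{A'}^{-1}(n+H)=n+H-(K-1)=n-K+c_K(A)+1$; the case $i=n+H$ is symmetric. Translating the permutation identity back into root language yields $\tau_A^{-1}(\alpha)=e_{n-k+c_k+1}-e_{n+h-r_h}$, which equals the positive root $\talpha_{n-k+c_k+1,\,n+h-r_h-1}$ precisely when $c_k\leq h-1$ and $r_h\leq k-1$ (equivalently $\alpha\notin A$), and the negative root $-\talpha_{n+h-r_h,\,n-k+c_k}$ precisely when $c_k\geq h$ and $r_h\geq k$ (equivalently $\alpha\in A$).

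The main obstacle is the bookkeeping in the inductive step rather than any conceptually deep fact: one must simultaneously verify that a $\beta$ of maximum height is automatically partial-order maximal (so $A'$ stays an ideal), that removing $\beta$ perturbs only the two statistics $c_K$ and $r_H$ and exactly by one, and that Proposition~\ref{proposition_orderIndependence} allows $\beta$ to be the first factor of $\tau_A$. The final sign dichotomy in the translation to roots is elementary, since $\alpha\in A$ is equivalent to $c_k\geq h$ and $r_h\geq k$ by the very definition of the statistics together with the ideal property.
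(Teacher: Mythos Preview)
Your proof is correct and follows the same inductive skeleton as the paper: both argue by induction on $|A|$, remove a maximum-height element $\beta$ (which is therefore partial-order maximal, so $A'$ stays an ideal), track that only the two statistics $c_K$ and $r_H$ drop by one, and use the order-independence result to factor $\tau_A = s_\beta\,\tau_{A'}$.

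The difference is in the execution of the inductive step. The paper stays at the level of roots and invokes Lemma~\ref{lemma_nablaKey} to split into three cases according to whether $k=v$ and/or $h=u$; each case then requires explicit root arithmetic (and the ``mixed'' Case~2 is the longest, treating $\alpha\in A$ and $\alpha\notin A$ separately). You instead decouple the problem by passing to the permutation level: since $s_\beta$ is the transposition $(n-K+1,\,n+H)$ and $\alpha=e_{n-k+1}-e_{n+h}$, the two indices $n-k+1$ and $n+h$ can be tracked independently, reducing everything to two one-line computations. This bypasses Lemma~\ref{lemma_nablaKey} entirely and makes the sign dichotomy at the end transparent (both cases of the formula are the same vector $e_{n-k+c_k+1}-e_{n+h-r_h}$, and membership of $\alpha$ in $A$ is exactly what decides which index is larger). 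Your route is shorter and arguably more conceptual; the paper's route has the minor advantage of staying inside the root-system language used elsewhere.
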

\begin{proof}
We apply induction to $|A|$. If $A$ is empty, then $\tau_A=\id$, $c_1=c_2=\cdots = c_n=0$, $r_1=r_2=\cdots=r_n=0$ and the claimed formula for $\tau_A^{-1}$ reads just $\alpha\longmapsto\alpha$ since $\alpha\not\in A$ for all $\alpha\in\nnabla$.

Now suppose $|A|\geq 1$, let $\beta=\talpha_{n-v+1,n+u-1}$ be a $\myleq$--maximal element in $A$ and set $A'=A\setminus\{\beta\}$. Then $A'$ is a $\nnabla$--ideal and the claimed formula holds for $A'$ by induction; note that the $\myleq$--maximality of $\beta$ implies that $\tau_A^{-1}=\tau_{A'}^{-1}s_\beta$. We have also $c_v=u$ and $r_u=v$. Moreover for $k\neq v$, $c'_k:=c_k(A')$ is equal to $c_k$, and $c'_v = c_v-1=u-1$; analogously, for $h\neq u$, $r'_h=r_h(A')$ is equal to $r_h$, and $r'_u = r_u - 1=v-1$.

Now let $\alpha=\talpha_{n-k+1,n+h-1}$ be an element of $\nnabla$. According to Lemma \ref{lemma_nablaKey}, we consider three different cases as follows
\[
s_\beta(\alpha) =
\left\{
\begin{array}{lll}
\alpha & \textrm{if }k\neq v,\,h\neq u & (\text{Case 1})\\
\alpha-\beta & \textrm{if }(k=v,\, h\neq u) \textrm{ or }(k\neq v,\,h=u) & (\text{Case 2})\\
-\alpha & \textrm{if }k=v,\,h=u & (\text{Case 3})\\
\end{array}
\right.
\]
\begin{enumerate}
\item[(Case 1)] In this case $\alpha\neq\beta$. The formula holds by noticing that $\tau_A^{-1}(\alpha)=\tau_{A'}^{-1}s_\beta(\alpha)=\tau_A'^{-1}(\beta)$, $c_k=c_k'$ and $r_h=r_h'$.

\item[(Case 2)] We suppose $k=v$ and $h\neq u$, the other case is analogous. First assume that $\alpha\in A$, then $\tau_A^{-1}(\alpha)=\tau_{A'}^{-1}(\alpha-\beta)=\tau_{A'}^{-1}(\alpha) - \tau_{A'}^{-1}(\beta)$. We compute these last two terms.

For the first one, note that $\alpha\neq\beta$ implies $\alpha\in A'$. Then we have
\[
\tau_{A'}^{-1}(\alpha) = -\talpha_{n+h-r'_h,n-k+c'_k} = - \talpha_{n+h-r_h,n-v+u-1}
\]
where the first equality holds by induction, and for the second one we have used $r'_h=r_h$ and $c'_k=c'_v=u-1$.

For the second term, using $\beta\not\in A'$, we find
\[
\tau_{A'}^{-1}(\beta) = \talpha_{n-v+c'_v+1,n+u-r'_u-1} = \talpha_{n-v+u,n+u-v}
\]
where the first equality holds by induction, and for the second equality we have used $c'_v=u-1$ and $r'_u=v-1$.

Putting them together we conclude that 
$$\tau_A^{-1}(\alpha) = - \talpha_{n+h-r_h,n-v+u-1} - \talpha_{n-v+u,n+u-v} = -\talpha_{n+h-r_h,n-v+u},$$ 
and the claimed formula holds since $k=v$ and $c_k = u$.

It remains to consider the case $\alpha\not\in A$. Then $\alpha\not\in A'$ and, as above, $\tau_A^{-1}(\alpha)=\tau_{A'}^{-1}(\alpha) - \tau_{A'}^{-1}(\beta)$. Moreover, by induction and by $c'_v=u-1$ and $r'_h=r_h$, we have
\[
\tau_{A'}^{-1}(\alpha) = \talpha_{n-k+c'_k+1,n+h-r'_h-1} = \talpha_{n-v+u,n+h-r_h-1}.
\]
The above proved formula $\tau_{A'}^{-1}(\beta)=\talpha_{n-v+u,n+u-v}$ is still valid. So we conclude that 
$$\tau_A^{-1}(\alpha) = \talpha_{n-v+u,n+h-r_h-1} - \talpha_{n-v+u,n+u-v} = \talpha_{n-v+u+1,n+h-r_h-1}$$ 
and the claimed formula holds since $k=v$ and $c_v=u$.

\item[(Case 3)] In this case we have $\alpha=\beta$, hence $\tau_A^{-1}(\alpha)=-\tau_{A'}^{-1}(\beta)=-\talpha_{n-v+u,n+u-v}$ where the last equality follows by the computation executed above. Hence the claimed formula holds since $r_h=v$ and $c_k=u$.
\end{enumerate}
\end{proof}

\begin{example}\label{Ex:running4}
We continue Example \ref{Ex:running3}. In this case, we find
\[
\xymatrix@!C=5pt@!R=5pt{
\talpha_{1} \\
-\talpha_{2}&\ \  \talpha_{3} \\
-\talpha_{2,4}&\ \  -\talpha_{4}&\ \  \talpha_{5} \\
-\talpha_{2,7}&\ \  -\talpha_{4,7}&\ \  -\talpha_{6,7}&\ \  -\talpha_{7}\\
}
\]
where we are reported the image via $\tau_A^{-1}$ of the elements in $\nnabla$ arranged as in the previous picture.
\end{example}

As a first consequence of the previous formula we have the following proposition.
\begin{proposition}\label{proposition_NtauInverse}
If $A$ is a $\nnabla$--ideal then $\wt{N}(\tau_A^{-1})=A$. In particular the length of $\tau_A$ is $|A|$.
\end{proposition}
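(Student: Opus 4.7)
The claim $\ell(\tau_A)=|A|$ will follow from $\wt{N}(\tau_A^{-1})=A$ via the length--inversion count $\ell(\sigma)=|\wt{N}(\sigma)|$. Proposition~\ref{proposition_tauInverseFormula} already pins down half of this: its two cases show that $\tau_A^{-1}$ sends $A$ to negative roots and $\nnabla\setminus A$ to positive roots, so $\wt{N}(\tau_A^{-1})\cap\nnabla=A$. The Proposition therefore reduces to showing that every positive root $\alpha\in\tPhi^+\setminus\nnabla$ satisfies $\tau_A^{-1}(\alpha)\in\tPhi^+$.

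I would prove this by induction on $|A|$, the base case $A=\emptyset$ being trivial. For $|A|\geq 1$, let $\beta=\beta_1$ be the $\myleq$--maximum of $A$ and set $A':=A\setminus\{\beta\}$. One first checks that $A'$ is again a $\nnabla$--ideal: for any $\alpha'\in A'$ and $\gamma\in\nnabla$ with $\gamma\leq\alpha'$, the $\nnabla$--ideal property of $A$ gives $\gamma\in A$; the equality $\gamma=\beta$ would force $\supp(\beta)\subseteq\supp(\alpha')$, hence $\height(\beta)\leq\height(\alpha')$, while the $\myleq$--maximality of $\beta$ gives the reverse inequality, so $\beta=\alpha'\notin A'$, a contradiction. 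Writing $\tau_A^{-1}=\tau_{A'}^{-1}s_\beta$, the induction hypothesis together with Proposition~\ref{proposition_tauInverseFormula} applied to $A'$ gives $\wt{N}(\tau_{A'}^{-1})=A'$, and the remaining task is to show $\tau_{A'}^{-1}(s_\beta(\alpha))\in\tPhi^+$ for every $\alpha\in\tPhi^+\setminus\nnabla$.

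The heart of the argument is a case analysis based on the partition of $\tPhi^+\setminus\nnabla$ into three regions: (L) roots $\talpha_{i,j}$ with $j\leq n-1$, (R) roots with $i\geq n+1$, and (B) roots with $i\leq n\leq j$ and $j-i\geq n$. Writing $\beta=\talpha_{n-v+1,n+u-1}$ and computing $\pair{\alpha}{\beta}\in\{-1,0,1\}$ in each region (with the value $-1$ excluded in (B) by the same sum--not--a--root argument as in Lemma~\ref{lemma_nablaKey}), the reflected root $s_\beta(\alpha)$ falls into one of three situations: (i) it equals $\alpha$ or lies again in $\tPhi^+\setminus\nnabla$, and the induction hypothesis applies directly; (ii) it equals $-\gamma$ for some $\gamma\in\nnabla$, and the $\nnabla$--ideal property of $A$ together with the inclusion $\supp(\gamma)\subseteq\supp(\beta)$ forces $\gamma\in A'$, so that Proposition~\ref{proposition_tauInverseFormula} applied to $A'$ yields $\tau_{A'}^{-1}(\gamma)\in\tPhi^-$, whence $\tau_A^{-1}(\alpha)\in\tPhi^+$; (iii) it equals $\gamma\in\nnabla$ with $\height(\gamma)>\height(\beta)$, and the $\myleq$--maximality of $\beta$ precludes $\gamma\in A$, so Proposition~\ref{proposition_tauInverseFormula} gives $\tau_{A'}^{-1}(\gamma)\in\tPhi^+$ directly.

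The main obstacle is the case-by-case bookkeeping of these boundary subcases, where $\alpha$ shares a starting or ending index with $\beta$: in each of L, R and B one must verify, through explicit height and support inequalities particular to that subcase, that the root $\gamma$ arising in situations (ii) and (iii) genuinely lies in $A'$ (respectively, outside $A$), so that Proposition~\ref{proposition_tauInverseFormula} applied to $A'$ produces the required sign of $\tau_{A'}^{-1}(\gamma)$. Once every subcase is verified, the induction closes and the Proposition follows, the length statement being an immediate consequence of $\wt{N}(\tau_A^{-1})=A$.
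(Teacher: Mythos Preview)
Your inductive approach is correct and genuinely different from the paper's argument. The paper does not induct on $|A|$: instead it writes each $\alpha\in\tPhi^+\setminus\nnabla$ as a $\Z$--linear combination of roots in $\nnabla$ (for instance $\talpha_{i,j}=\talpha_{i,n}-\talpha_{j+1,n}$ when $j<n$, and $\talpha_{i,j}=\talpha_{i,n}-\talpha_{n,n}+\talpha_{n,j}$ in region~(B)), applies the explicit formula of Proposition~\ref{proposition_tauInverseFormula} to each summand, and then checks by hand---using inequalities among the combinatorial data $c_k(A)$ and $r_h(A)$---that the resulting expression for $\tau_A^{-1}(\alpha)$ is a positive root. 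Your argument bypasses these explicit formulas in the inductive step: once the induction hypothesis gives $\wt{N}(\tau_{A'}^{-1})=A'$, the sign of $\tau_{A'}^{-1}(s_\beta(\alpha))$ is determined solely by whether $s_\beta(\alpha)$, or its negative, lies in $A'$, and that membership is settled by the $\nnabla$--ideal property together with the $\myleq$--maximality of $\beta$. (In particular, in your situations (ii) and (iii) you may invoke the induction hypothesis directly rather than Proposition~\ref{proposition_tauInverseFormula}.) The case analysis you outline does close: in regions L and R the pairing $\pair{\alpha}{\beta}$ takes only the values $0,\pm1$, with the value $+1$ producing a negative root $-\gamma$ with $\gamma<\beta$ (hence $\gamma\in A'$) and the value $-1$ producing $\alpha+\beta$, which either lies in region~B or lies in $\nnabla$ with height strictly exceeding $\height(\beta)$; in region~B the pairing is $0$ or $1$, and in the latter case $\alpha-\beta$ lands back in L or R. The paper's route is shorter because the heavy lifting was already absorbed into Proposition~\ref{proposition_tauInverseFormula}; your route is more self-contained, mirrors the inductive structure of that proposition's own proof, and would in principle let one establish $\wt{N}(\tau_A^{-1})=A$ without ever computing $\tau_A^{-1}(\alpha)$ explicitly.
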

\begin{proof}
By Proposition \ref{proposition_tauInverseFormula}, $A\subseteq \wt{N}(\tau_A^{-1})$ and also $\nnabla\setminus A \subseteq \wt{P}(\tau_A^{-1})$. Since $\tPhi^+=\wt{N}(\tau_A^{-1})\sqcup\wt{P}(\tau_A^{-1})$, it suffices to show that for any $\alpha=\talpha_{i,j}\in\tPhi^+\setminus\nnabla$, $\tau_A^{-1}(\alpha)$ is a positive root. There are three cases to consider:

\begin{enumerate}
\item[(1)] Assume that $j<n$, then $\alpha=\talpha_{i,n}-\talpha_{j+1,n}$. We set $k=n-i+1$ and $k'=n-j$.
\begin{itemize}
\item If $\talpha_{i,n}\in A$, as $A$ is a $\nnabla$--ideal, $\talpha_{j,n}\in A$. By Proposition \ref{proposition_tauInverseFormula}, we have 
$$\tau_A^{-1}(\alpha) = -\talpha_{n+1-r_1,i-1+c_k}+\talpha_{n+1-r_1,j+c_{k'}}.$$ 
Note that $i\leq j$ implies $i<j+1$, hence $k>k'$ and $c_k\leq c_{k'}$; we conclude $j+c_{k'}\geq i-1+c_k$. This proves that $\tau_A^{-1}(\alpha)$ is a positive root.

\item If $\talpha_{i,n}\not\in A$ and $\talpha_{j+1,n}\in A$ then by Proposition \ref{proposition_tauInverseFormula}, $\tau_A^{-1}(\talpha_{i,n})\in\tPhi^+$ and $\tau_A^{-1}(\talpha_{j+1,n})\in\tPhi^-$, so $\tau_A^{-1}(\alpha)$ is positive.

\item If $\talpha_{i,n}\not\in A$ and $\talpha_{j+1,n}\not\in A$, by Proposition \ref{proposition_tauInverseFormula}, 
$$\tau_A^{-1}(\alpha) = \talpha_{i+c_k,n-r_1}-\talpha_{j+1+c_{k'},n-r_1}$$ 
and we conclude by $i+c_k\leq j+1+c_{k'}$ as already proved.
\end{itemize}

\item[(2)] When $i>n$, we proceed in a similar way as in (1) by symmetry.

\item[(3)] Assume that $j-i\geq n$: in this case $i<n<j$.

If $\talpha_{i,n}\not\in A$ then $\tau_A^{-1}(\talpha_{i,n})\in\tPhi^+$ since $\talpha_{i,n}\in\nnabla\setminus A$; moreover by the case (2) above, $\tau_A^{-1}(\talpha_{n+1,j})\in\tPhi^+$. Writing $\alpha=\talpha_{i,n}+\talpha_{n+1,j}$, the above argument shows that $\tau_A^{-1}(\alpha)\in\tPhi^+$. Similarly, if $\talpha_{n,j}\not\in A$ we argue in the same way.

It remains to consider the case $\talpha_{i,n},\talpha_{n,j}\in A$. We write $\alpha=\talpha_{i,n}-\talpha_{n,n}+\talpha_{n,j}$ and note that the image of these three roots are $-\talpha_{n+1-r_1,i-1+c_k}$, $\talpha_{n+1-r_1,n-1+c_1}$ and $-\talpha_{j+1-r_h,n-1+c_1}$ respectively, where $h=j+1-n$ and $k=n-i+1$. By the assumption $j-i\geq n$, it is easy to see $r_h<k$ and $c_k<h$, hence $i-1+c_k<j+1-r_h$ and this proves that $\tau_A^{-1}(\alpha)\in\tPhi^+$.
\end{enumerate}
\end{proof}

Another consequence of Proposition \ref{proposition_tauInverseFormula} is the following property related to the Bruhat order of $\tPhi$.

\begin{proposition}\label{proposition_tauAntiIsomorphism}
The map
\[
\nnabla\supseteq A \ni\alpha\,\stackrel{i_A}{\longmapsto}\,-\tau_A^{-1}(\alpha) \in \wt{N}(\tau_A) \subseteq \tPhi^+
\]
is a poset anti-isomorphism. Moreover for $\alpha,\beta\in A$ we have:
\begin{itemize}
	\item[(i)] if $\alpha\meet\beta$ exists then it is in $A$ and $i_A(\alpha\meet\beta) = i_A(\alpha) \join i_A(\beta)$; on the other hand if $i_A(\alpha)\join i_A(\beta)$ is in $\wt{N}(\tau_A)$ then $\alpha\meet\beta$ exists;
	\item[(ii)] if $\alpha\join\beta$ is in $A$ then $i_A(\alpha)\meet i_A(\beta)$ exists and it is equal to $i_A(\alpha \join \beta)$; on the other hand if $i_A(\alpha)\meet i_A(\beta)$ exists then $\alpha\join\beta$ is in $A$.
\end{itemize}
\end{proposition}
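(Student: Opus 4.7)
The plan is to transfer the statement into the combinatorics of the conjugate non-increasing sequences $(c_k)$ and $(r_h)$ attached to $A$. Writing every $\alpha\in\nnabla$ uniquely as $\alpha = \talpha_{n-k+1,n+h-1}$, I encode $\alpha$ by the pair $(k,h)$ with $k,h\geq 1$ and $k+h\leq n+1$; the partial order on $\nnabla$ then becomes componentwise, the operations $\meet$ and $\join$ become componentwise $\min$ and $\max$ (and since every element of $\nnabla$ contains $n$ in its support, meets in $\nnabla$ always exist), and the formula of Proposition \ref{proposition_tauInverseFormula} reads $i_A(\alpha) = \talpha_{n+h-r_h,\,n-k+c_k}$ for $(k,h)\in A$. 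The crucial structural input is the conjugacy $(k,h)\in A \iff h\leq c_k \iff k\leq r_h$, which says $(c_k)$ and $(r_h)$ are conjugate partitions describing the ``staircase'' $A$.

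The anti-isomorphism then follows by reading off partial orders in these coordinates. With $\alpha=(k,h)$ and $\beta=(v,u)$ in $A$, the above formula gives $i_A(\alpha)\leq i_A(\beta)$ iff $h-r_h\geq u-r_u$ and $c_k-k\leq c_v-v$; if $k\leq v$ then monotonicity of $c$ gives $c_k-k\geq c_v-v$, whereas if $k>v$ then $c_k\leq c_v$ together with $v-k<0$ force $c_k-k<c_v-v$. A symmetric statement in the $h$-coordinate then yields $\alpha\leq \beta \iff i_A(\alpha)\geq i_A(\beta)$; together with $|\wt{N}(\tau_A)|=\ell(\tau_A)=|A|$ from Proposition \ref{proposition_NtauInverse}, this gives the claimed poset anti-isomorphism. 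For (i), the meet $\alpha\meet\beta$ exists with parameters $(\min(k,v),\min(h,u))$ and lies in $A$ by monotonicity of $(c_k)$; a brief case-split of the same flavour proves $i_A(\alpha\meet\beta)=i_A(\alpha)\join i_A(\beta)$, and its converse is automatic since meets in $\nnabla$ always exist. The forward direction of (ii) is analogous: if $\alpha\join\beta\in A$ with parameters $(K,H)=(\max(k,v),\max(h,u))$, so $H\leq c_K$ and $K\leq r_H$, one checks $i_A(\alpha)\meet i_A(\beta)=i_A(\alpha\join\beta)$ by the same case analysis, with the existence of the meet reducing to $H+K\leq c_K+r_H$, which is forced by $(K,H)\in A$.

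The main obstacle is the converse in (ii): from the sole existence of $i_A(\alpha)\meet i_A(\beta)$ one must deduce $(K,H)\in A$, and the extracted inequality $H+K\leq c_K+r_H$ alone does not suffice to give $H\leq c_K$ and $K\leq r_H$ separately. Here the conjugacy of $(c_k)$ and $(r_h)$ is decisive: the equivalence $(K,H)\notin A \iff H>c_K \iff K>r_H$ forces the two strict inequalities $H>c_K$ and $K>r_H$ to occur \emph{simultaneously} whenever $(K,H)\notin A$, hence $H+K>c_K+r_H$, contradicting the existence of the meet. This forces $(K,H)\in A$ and completes the proof.
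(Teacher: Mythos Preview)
Your proof is correct and follows the same route as the paper: parametrise $\nnabla$ by pairs $(k,h)$, use the explicit formula of Proposition~\ref{proposition_tauInverseFormula}, and exploit that $k\mapsto c_k-k$ is strictly decreasing and $h\mapsto h-r_h$ strictly increasing (the paper phrases this more tersely via ``$t\mapsto r_t,c_t$ non-increasing'').

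Two points where you add content are worth recording. First, your observation that every element of $\nnabla$ contains $n$ in its support, so that $\alpha\meet\beta$ always exists for $\alpha,\beta\in A$, makes the converse in (i) vacuous; this is cleaner than the paper's argument via surjectivity of $i_A$. Second, for the converse in (ii) the paper only writes ``analogous, so omitted'', but the direct analogue of its (i)-converse argument would actually need the stronger hypothesis that $i_A(\alpha)\meet i_A(\beta)$ lie in $\wt N(\tau_A)$, not merely that it exist. Your conjugacy observation---that $(K,H)\notin A$ forces \emph{both} $H>c_K$ and $K>r_H$, hence $H+K>c_K+r_H$---is exactly the missing step, and is the right way to close the argument.
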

\begin{proof}
By Proposition \ref{proposition_NtauInverse}, $A=N(\tau_A^{-1})$, hence $i_A(A)\subseteq\tPhi^+$ and $i_A$ maps $A$ to $\wt{N}(\tau_A)\subseteq\tPhi^+$.

Let $\alpha=\talpha_{n-k+1,n+h-1}$ and $\beta=\talpha_{n-v+1,n+u-1}$ be two elements of $A$. By Proposition \ref{proposition_tauInverseFormula}, $i_A(\alpha) = \talpha_{n+h-r_h,n-k+c_k}$ and $i_A(\beta)=\alpha_{n+u-r_u,n-v+c_v}$. Note that $\alpha\leq\beta$ if and only if $k\leq v$ and $h\leq u$. Recalling that $t\longmapsto r_t$ and $t\longmapsto c_t$ are both non-increasing maps, we easily see that $k\leq v$ and $h\leq u$ if and only if $n+u-r_u\geq n+h-r_h$ and $n-v+c_v\leq n-k+c_k$; this last condition is clearly equivalent to $i_A(\alpha)\geq i_A(\beta)$. This finishes the proof that $i_A$ is a poset anti-isomorphism.

Now we prove (i). If $\alpha\meet\beta$ exists then it is an element of $A$ since $A$ is a $\nnabla$--ideal and $\alpha,\beta\geq\talpha_{n,n}$. Let $a=\min(k,v)$ and $b=\min(h,u)$. Then $\alpha\meet\beta = \talpha_{n-a+1,n+b-1}$. By Proposition \ref{proposition_tauInverseFormula}, $i_A(\alpha\meet\beta)=\talpha_{n+b-r_b,n-a+c_a}$. Since $t\longmapsto r_t,c_t$ are both non-increasing, we find $\min(n+h-r_h,n+u-r_u)=n+b-r_b$ and $\max(n-k+c_k,n-v+c_v)=n-a+c_a$, this proves that $i_A(\alpha)\join i_A(\beta)=i_A(\alpha\meet\beta)$.

On the other hand note that $\eta = i_A(\alpha)\join i_A(\beta)$ always exists. If it is an element of $\wt{N}(\tau_A)$, then there exists $\gamma\in A$ such that $i_A(\gamma)=\eta$ since $i_A:A\longrightarrow \wt{N}(\tau_A)$ is bijective. Since $i_A$ an anti-isomorphism of posets and $\eta\geq i_A(\alpha),i_A(\beta)$, we get $\alpha,\beta\geq\gamma$, and hence $\alpha\meet\beta$ exists.

The proof of (ii) is analogous, so omitted.
\end{proof}

\begin{example}\label{Ex:running5}
We continue Example \ref{Ex:running4}. Let $\tau_A=s_4s_5s_6s_7s_3s_4s_2$. Then 
$$\wt{N}(\tau_A)=\{\talpha_2,\talpha_{2,4},\talpha_{2,7},\talpha_{4},\talpha_{4,7},\talpha_{6,7},\talpha_7\}.$$
In this example, the map $i_A$ is given by:
\[
\xymatrix@!C=0pt@!R=0pt{
 \circ & \circ & \circ & \circ & \circ & \circ & \circ & &                             & & \circ & \circ & \circ & \circ & \circ & \circ & \circ\\
       & \circ & \circ & 1     & \circ & \circ & \circ & &                             & &       & 1     & \circ & 2     & \circ & \circ & 4\\
       &       & \circ & 2     & 3     & \circ & \circ & &                             & &       &       & \circ & \circ & \circ & \circ & \circ\\
       &       &       & 4     & 5     & 6     & 7     & & \stackrel{i_A}{\longmapsto} & &       &       &       & 3     & \circ & \circ & 5\\
       &       &       &       & \circ & \circ & \circ & &                             & &       &       &       &       & \circ & \circ & \circ\\
       &       &       &       &       & \circ & \circ & &                             & &       &       &       &       &       & \circ & 6\\
       &       &       &       &       &       & \circ & &                             & &       &       &       &       &       &       & 7\\
}
\]
Our convention is that the positive root at position $k=1,2,\cdots,7$ is mapped by $i_A$ to the positive root labelled by the same number in the picture on the right hand side.

\end{example}

The following proposition on representable functions on $\nnabla$--ideals will be used to construct a suitable weight associated with a pair $(\lambda,\tau)$ with $\tau$ an element of $W$ and $\lambda$ a dominant weight for $\Phi$.

\begin{proposition}\label{proposition_nablaIdealWeight}
Let $A$ be a $\nnabla$--ideal and let $f:A\longrightarrow\Z$ be a map such that:
\begin{itemize}
	\item[(i)] for any $\alpha \in A$ we have $f(\alpha) \leq 0$,
	\item[(ii)] if $\alpha,\beta\in A$ and $\alpha\leq\beta$ then $f(\alpha) \leq f(\beta)$,
	\item[(iii)] if $\alpha,\beta,\alpha\join\beta\in A$ then $f(\alpha\join\beta)=f(\alpha) + f(\beta) - f(\alpha\meet\beta)$,
	\item[(iv)] if $\alpha,\beta\in A$ and $f(\alpha)+f(\beta)-f(\alpha\meet\beta)<0$ then $\alpha\join\beta\in A$. 
\end{itemize}
Then there exists $\mu\in\tLambda$ such that $\pair{\mu}{\alpha} = f(\alpha)$ for all $\alpha\in A$ and $\pair{\mu}{\alpha}\geq 0$ for all $\alpha\in\tPhi^+\setminus A$. In particular $\tau_A^{-1}\mu$ is a dominant weight.
\end{proposition}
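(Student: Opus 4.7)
The plan is to construct $\mu$ explicitly by assigning integer values to the simple-coroot pairings $m_l := \langle \mu, \talpha_l^\vee\rangle$ for $l = 1, \ldots, 2n-1$. If $A = \emptyset$, I would take $\mu = 0$. Otherwise, $A$ contains $\talpha_n$ (the unique minimum of $\nnabla$), and the union of supports $L(A) := \bigcup_{\alpha \in A}\supp(\alpha)$ is an interval $[l_1, l_2]$ with $l_1 \leq n \leq l_2$; the ideal property forces $\talpha_{l,n} \in A$ for all $l_1 \leq l \leq n$ and $\talpha_{n,l} \in A$ for all $n \leq l \leq l_2$. I will fix an integer $K \geq \max_{\alpha \in A}|f(\alpha)|$ and define
\[
m_n := f(\talpha_n), \quad m_l := f(\talpha_{l,n}) - f(\talpha_{l+1,n}) \text{ for } l_1 \leq l < n, \quad m_l := f(\talpha_{n,l}) - f(\talpha_{n,l-1}) \text{ for } n < l \leq l_2,
\]
together with $m_l := K$ for $l \notin [l_1, l_2]$.

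The core telescoping identity I would exploit is that for any $[i, j] \subseteq L(A)$ with $i \leq n \leq j$, one has $\sum_{s=i}^j m_s = f(\talpha_{i,n}) + f(\talpha_{n,j}) - f(\talpha_n)$. When $\talpha_{i, j} \in A$, applying condition (iii) to $\alpha = \talpha_{i, n}$ and $\beta = \talpha_{n, j}$ (whose meet is $\talpha_n$ and whose join is $\talpha_{i, j}$, all in $A$) identifies this sum with $f(\talpha_{i, j})$, establishing the first required property. To verify $\langle \mu, \talpha_{i, j}^\vee \rangle \geq 0$ for $\talpha_{i, j} \in \tPhi^+ \setminus A$, I would split based on how $[i, j]$ sits in $L(A)$. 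If $[i, j] \subseteq L(A)$ with $j < n$ or $i > n$, the sum collapses via monotonicity (ii) to a non-negative difference of $f$-values. If $[i, j] \subseteq L(A)$ with $i \leq n \leq j$, the same telescoping identity applies, but now $\talpha_{i, n} \join \talpha_{n, j} = \talpha_{i, j} \notin A$, so the contrapositive of condition (iv) yields the desired non-negativity. If $[i, j] \not\subseteq L(A)$, then at least one term $m_s = K$ appears in the sum, while the telescoping contribution on $[i, j] \cap L(A)$ is bounded below by $-K$ by the previous two analyses, so the total is $\geq K - K = 0$.

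For the concluding assertion, dominance of $\tau_A^{-1}\mu$ will follow directly from the two properties of $\mu$: for each simple root $\talpha_l$ of $\tPhi$, either $\tau_A(\talpha_l) \in \tPhi^+$, in which case $\tau_A^{-1}(\tau_A(\talpha_l)) = \talpha_l$ is positive, so $\tau_A(\talpha_l) \notin \wt{N}(\tau_A^{-1}) = A$ by Proposition \ref{proposition_NtauInverse}, giving $\langle \mu, \tau_A(\talpha_l)^\vee \rangle \geq 0$; or $\tau_A(\talpha_l) = -\gamma$ with $\gamma \in A$, and $\langle \mu, -\gamma^\vee \rangle = -f(\gamma) \geq 0$ by condition (i). I expect the main obstacle to lie in the case analysis of the non-negativity verification: condition (iv) plays its essential role precisely for roots $\talpha_{i, j} \in \tPhi^+ \setminus A$ whose support lies entirely in $L(A)$ and straddles $n$, whereas the remaining configurations reduce either to monotonicity (ii) or to absorption by a sufficiently large free coefficient $K$.
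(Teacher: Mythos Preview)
Your proposal is correct and follows essentially the same approach as the paper. Both constructions define the coefficients $m_l$ (the paper writes $c_t$) inside the support interval $L(A)=[l_1,l_2]$ by the identical telescoping formula, verify $\pair{\mu}{\alpha}=f(\alpha)$ on $A$ via condition~(iii), and handle the non-negativity for $\talpha_{i,j}\notin A$ by the same trichotomy: monotonicity~(ii) when $n\notin[i,j]$, the contrapositive of~(iv) when $[i,j]\subseteq L(A)$ straddles $n$, and a boundary argument otherwise. The only difference is cosmetic: outside $L(A)$ the paper sets $c_{l_1-1}=-f(\talpha_{l_1,n})$, $c_{l_2+1}=-f(\talpha_{n,l_2})$ and zero elsewhere, making the relevant partial sums vanish exactly, whereas you set all such coefficients to a large constant $K$ and absorb any negative contribution from $[i,j]\cap L(A)$ by a single $K$-term; both choices work, though the paper's more economical choice is what later allows $\tlambda$ to be taken fundamental in Proposition~\ref{proposition_finalResultFundamental}.
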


\begin{proof}
If $A=\varnothing$ then we may take $\mu=0$. From now on we assume that $A$ is non-empty. Being a $\nnabla$--ideal, we have $\talpha_{n,n}\in A$.

We define certain integers $c_1,c_2,\ldots,c_{2n-1}$ we use later in a formula for $\mu$ as in the statement of the Proposition. First we set $c_n := f(\talpha_{n,n})$. Let $h$ be the minimum such that $\alpha_{h,n}\in A$ and $k$ be the maximum such that $\alpha_{n,k}\in A$. Since the subset $A$ is a $\nnabla$--ideal, we have:
\begin{itemize}
\item $\talpha_{t,s}\not\in A$ for all $1\leq t\leq h-1$ and all $t\leq s\leq 2n-1$,
\item $\talpha_{t,n}\in A$ for all $h\leq t\leq n$,
\item $\talpha_{t,s}\not\in A$ for all $k+1\leq s\leq 2n-1$ and all $1\leq t\leq s$,
\item $\talpha_{n,s}\in A$ for all $n\leq s\leq k$.
\end{itemize}
\noindent We set $c_t = f(\talpha_{t,n}) - f(\talpha_{t+1,n})$ for all $t=h,h+1,\ldots,n-1$ and $c_t=f(\talpha_{n,t})-f(\talpha_{n,t-1})$ for $t=n+1,n+2,\ldots,k$. Moreover let $c_{h-1} = -f(\talpha_{h,n})$, $c_{k+1} = -f(\talpha_{n,k})$ and $c_t=0$ for all $1\leq t\leq h-2$ and all $k+2\leq t\leq 2n-1$.

We will show that 
$$\mu:=\sum_{t=1}^{2n-1}c_t\wt{\varpi}_t\in\tLambda$$ 
verifies the properties requested in the statement of the proposition. 

We first show that for any $\alpha\in A$, $\pair{\mu}{\alpha}=f(\alpha)$. Clearly $\langle\mu,\talpha^\vee_{n,n}\rangle = c_n =f(\talpha_{n,n})$. For $h\leq i\leq n-1$ we have
\[
\langle\mu,\talpha^\vee_{i,n}\rangle = c_i + c_{i+1} + \cdots + c_{n-1} + c_n = f(\talpha_{i,n})
\]
and, in the same way, for all $n+1\leq j\leq k$,
$$\langle\mu,\talpha^\vee_{n,j}\rangle=c_n + c_{n+1} + \cdots + c_j = f(\talpha_{n,j}).$$

Now let $\talpha_{i,j}$ be such that $i\leq n\leq j$ (or, equivalently, such that $\talpha_{n,n}\leq\talpha_{i,j}$). Then $\talpha_{i,n}\join\talpha_{n,j}=\talpha_{i,j}$, $\talpha_{i,n}\meet\talpha_{j,n}=\talpha_{n,n}$ and $\talpha_{i,j}=\talpha_{i,n}+\talpha_{n,j}-\talpha_{n,n}$.

Since $A$ is a $\nnabla$--ideal, $\talpha_{i,j}\in A$ implies $\talpha_{i,n},\talpha_{n,j}\in A$. By (iii), 
$$f(\talpha_{i,n}\join\talpha_{n,j})=f(\talpha_{i,n})+f(\talpha_{n,j})-f(\talpha_{n,n})$$
and, using what we have already proved, we get
\[
\langle\mu,\talpha^\vee_{i,j}\rangle = \langle\mu,\talpha^\vee_{i,n}\rangle + \langle\mu,\talpha^\vee_{n,j}\rangle - \langle\mu,\talpha^\vee_{n,n}\rangle = f(\talpha_{i,n}) + f(\talpha_{n,j}) - f(\talpha_{n,n}) = f(\talpha_{i,j}).
\]

We show that $\langle\mu,\talpha^\vee_{i,j}\rangle\geq 0$ for all $\talpha_{i,j}\not\in A$. Note that, by (i) and (ii), $c_t\geq 0$ for all $h\leq t\leq k$ but $c_n$ is negative. It is then clear that $\langle\mu,\talpha^\vee_{i,j}\rangle\geq 0$ if $j<n$ or $n<i$. 

We assume that $i\leq n\leq j$. If furthermore $h\leq i$ and $j\leq k$ then, as above, 
$$\langle\mu,\talpha^\vee_{i,j}\rangle = f(\talpha_{i,n}) + f(\talpha_{n,j}) - f(\talpha_{n,n})$$ 
and this is a non negative integer by (iv) since we are assuming $\talpha_{i,j}\not\in A$. On the other hand, if $i < h$ then 
$$\langle\mu,\talpha^\vee_{i,j}\rangle \geq  c_{h-1} + c_h + \cdots + c_n\geq c_{h-1} + c_{n} = c_{h-1}+f(\talpha_{h,n})=0$$ 
and, in the same way $\langle\mu,\talpha^\vee_{i,j}\rangle\geq 0$ if $j > k$. This finishes the proof of the required properties for $\mu$.

It remains to show that $\tau_A^{-1}\mu$ is a dominant weight. This is equivalent to $\pair{\tau_A^{-1}\mu}{\beta}\geq 0$ for all $\beta\in\tPhi^+$. Fix such a $\beta$ and let $\alpha:=-\tau_A\beta$. If $\alpha$ is a positive root then $\alpha\in \wt{N}(\tau_A^{-1})$, so $\alpha\in A$ by Proposition \ref{proposition_NtauInverse}. Hence $\pair{\tau_A^{-1}\mu}{\beta}=-\pair{\mu}{\alpha}=-f(\alpha)\geq 0$. On the other hand, if $\alpha$ is a negative root, then $-\alpha\in \wt{P}(\tau_A^{-1})$, so $-\alpha\in\tPhi^+\setminus N(\tau_A^{-1})=\tPhi^+\setminus A$, again by Proposition \ref{proposition_NtauInverse}; hence $\pair{\tau_A^{-1}\mu}{\beta}=\pair{\mu}{-\alpha}\geq 0$ by what have already proved for $\mu$.
\end{proof}

\begin{remark}
The weight $\mu$, such that the associated function $f$ fulfils the properties (i)-(iv) in the proposition, is in general not unique.
\end{remark}


\subsection{From rectangular elements of $\SL_{n+1}$ to $\SL_{2n}$}\label{Sec:D}

Let $\tau\in W$ be an irreducible rectangular element. Recall that in Section \ref{section_rectangularElements} for a subset $A\subseteq\Phi^+$ we have defined $A^-=A\cap\{\alpha_{1,1},\alpha_{1,2},\ldots,\alpha_{1,n}\}$ and $A^+=A\cap\{\alpha_{1,n},\alpha_{2,n},\ldots,\alpha_{n,n}\}$. In particular let $n = j_1 > j_2 > \cdots > j_s \geq 1$ and $1=i_1 < i_2 < \cdots < i_r\leq n$ be such that $N(\tau)^{-}=\{\alpha_{1,j_1},\alpha_{1,j_2},\ldots,\alpha_{1,j_s}\}$ and $N(\tau)^+=\{\alpha_{i_1,n},\alpha_{i_2,n},\ldots,\alpha_{i_r,n}\}$ (note that, $\alpha_{1,j_1} = \alpha_{i_1,n} = \theta$). By Proposition \ref{proposition_irreducibleRectangular}, $N(\tau)$ is the set of all $\alpha_{i_h,j_k}$ with $1\leq h\leq r$, $1\leq k\leq s$ and $i_h\leq j_k$. We define a map $\mathcal{D}:N(\tau)\ra\nnabla$ by:
\[
N(\tau)\ni \alpha_{i_h,j_k} \stackrel{\mathcal{D}}{\longmapsto} \talpha_{n-k+1,n+h-1} \in \nnabla\subseteq\tPhi^+.
\]
Let $A\subseteq\nnabla$ be the image of $\mathcal{D}$. It is clear that $\mathcal{D}$ is a bijection from $N(\tau)$ to $A$.

\begin{lemma}\label{lemma_imageDIdeal}
If $\tau$ is an irreducible rectangular element then the image $A = \mathcal{D}(N(\tau))$ is a $\nnabla$--ideal in $\tPhi^+$.
\end{lemma}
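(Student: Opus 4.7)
The plan is to check the defining property of a $\nnabla$--ideal directly, by unwinding the parametrization of $A$ given by $\mathcal{D}$. First, I would recall that on $\tPhi^+$ the dominance order reads $\talpha_{a,b}\leq\talpha_{c,d}$ iff $a\geq c$ and $b\leq d$, so that for two elements of $\nnabla$ we have
\[
\talpha_{n-k'+1,n+h'-1}\leq\talpha_{n-k+1,n+h-1}\quad\Longleftrightarrow\quad k'\leq k\text{ and }h'\leq h.
\]
Hence, after the relabelling given by $\mathcal{D}$, checking that $A$ is a $\nnabla$--ideal amounts to showing: whenever $\alpha_{i_h,j_k}\in N(\tau)$ with $1\leq h\leq r$ and $1\leq k\leq s$, and $(k',h')$ satisfies $1\leq k'\leq k$, $1\leq h'\leq h$, together with the $\nnabla$--constraint $h'+k'\leq n+1$, then $\mathcal{D}^{-1}(\talpha_{n-k'+1,n+h'-1})=\alpha_{i_{h'},j_{k'}}$ actually lies in $N(\tau)$.

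The key observation is that the structure theorem for irreducible rectangular subsets, Proposition \ref{proposition_irreducibleRectangular}, reduces membership in $N(\tau)$ to a single numerical inequality: for indices $h''\in[r]$, $k''\in[s]$, one has $\alpha_{i_{h''},j_{k''}}\in N(\tau)$ if and only if $i_{h''}\leq j_{k''}$. So it suffices to verify this for the pair $(h',k')$ above, knowing it for $(h,k)$. This is an immediate consequence of the monotonicity of the two sequences: since $i_1<i_2<\cdots<i_r$ and $j_1>j_2>\cdots>j_s$, from $h'\leq h$ and $k'\leq k$ we get $i_{h'}\leq i_h$ and $j_{k'}\geq j_k$, hence
\[
i_{h'}\leq i_h\leq j_k\leq j_{k'},
\]
so $\alpha_{i_{h'},j_{k'}}\in N(\tau)$ and therefore $\talpha_{n-k'+1,n+h'-1}\in A$.

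I expect no real obstacle here; the argument is essentially bookkeeping once one observes that $\mathcal{D}$ converts the bi-indexing $(h,k)$ of $N(\tau)$ by the monotone sequences $(i_h)$ and $(j_k)$ directly into coordinates on the rectangle $\nnabla$, in a way that is order-reversing in both coordinates. The only step that needs a moment of care is to confirm that the $\nnabla$--constraint $h'+k'\leq n+1$ is automatically respected, but this is inherited from $\talpha_{n-k+1,n+h-1}\in\nnabla$ (which forces $h+k\leq n+1$) together with $h'\leq h$, $k'\leq k$. With these observations the verification of the $\nnabla$--ideal condition is complete.
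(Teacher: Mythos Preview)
Your proof is correct and follows essentially the same route as the paper's own argument: both pick an element $\talpha_{n-k+1,n+h-1}\in A$ and a smaller element of $\nnabla$, translate the order relation into the inequalities $h'\leq h$, $k'\leq k$ on the bi-index, and then use the monotonicity of $(i_h)$ and $(j_k)$ together with $i_h\leq j_k$ to obtain $i_{h'}\leq j_{k'}$, whence $\alpha_{i_{h'},j_{k'}}\in N(\tau)$ by the description of $N(\tau)$ coming from Proposition~\ref{proposition_irreducibleRectangular}. The only cosmetic difference is notation (the paper writes the smaller element as $\talpha_{n-u,n+v}$ with $u=k'-1$, $v=h'-1$).
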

\begin{proof}
Let $\talpha_{n-k+1,n+h-1}=\mathcal{D}(\alpha_{i_h,j_k})\in A$, for some $1\leq h\leq r$ and $1\leq k\leq s$, and let $\talpha_{n-u,n+v}$, for some $0\leq u,v\leq n-1$, be an element of $\nnabla$ with $\talpha_{n-u,n+v}\leq\talpha_{n-k+1,n+h-1}$. Then $u+1\leq k$, $v+1\leq h$, hence $i_{v+1}\leq i_h\leq j_k\leq j_{u+1}$, where the second inequality follows by $\alpha_{i_h,j_k}\in N(\tau)$. We have thus proved $i_{v+1}\leq j_{u+1}$, so $\alpha_{i_{v+1},j_{u+1}}$ is an element of $N(\tau)$ and clearly $\mathcal{D}(\alpha_{i_{v+1},j_{u+1}})=\talpha_{n-u,n+v}$. We conclude that $\talpha_{n-u,n+v}$ is an element of $A$. This finishes the proof that $A$ is a $\nnabla$--ideal.
\end{proof}

\begin{proposition}\label{proposition_DantiIsomorphism}
If $\tau$ is an irreducible rectangular element then the map $\mathcal{D}:N(\tau)\longrightarrow A$ is an anti-isomorphism of posets. Moreover for $\alpha,\beta\in N(\tau)$ we have:
\begin{itemize}
	\item[(i)] if $\alpha\meet\beta$ exists then it is in $N(\tau)$ and $\mathcal{D}(\alpha\meet\beta) = \mathcal{D}(\alpha)\join \mathcal{D}(\beta)$; on the other hand if $\mathcal{D}(\alpha)\join \mathcal{D}(\beta)$ is in $A$ then $\alpha\meet\beta$ exists;
	\item[(ii)] $\alpha\join\beta$ is in $N(\tau)$, $\mathcal{D}(\alpha)\meet \mathcal{D}(\beta)$ exists and is an element of $A$, moreover $\mathcal{D}(\alpha\join\beta)=\mathcal{D}(\alpha)\meet \mathcal{D}(\beta)$.
\end{itemize}
\end{proposition}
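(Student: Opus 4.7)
The plan is to carry out a direct computation using the explicit parametrisation of $N(\tau)$ provided by Proposition \ref{proposition_irreducibleRectangular}, namely that
\[
N(\tau) = \{\alpha_{i_h,j_k} \mid 1\leq h\leq r,\ 1\leq k\leq s,\ i_h\leq j_k\},
\]
coupled with the explicit shape of meets and joins given in Definition \ref{defn-join-meet}. Since $h\mapsto i_h$ is strictly increasing and $k\mapsto j_k$ is strictly decreasing, the inequality $\alpha_{i_h,j_k} \geq \alpha_{i_{h'},j_{k'}}$ in the dominance order is equivalent to $h\leq h'$ and $k\leq k'$; on the other hand, $\tilde{\alpha}_{n-k+1,n+h-1} \geq \tilde{\alpha}_{n-k'+1,n+h'-1}$ is equivalent to $k\geq k'$ and $h\geq h'$. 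Comparing these two chains of equivalences immediately yields that $\mathcal{D}$ reverses the order, and combined with Lemma \ref{lemma_imageDIdeal} (which also shows $\mathcal{D}$ is a bijection onto $A$) this establishes the anti-isomorphism of posets.

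For part (i), I would compute both sides explicitly. From Definition \ref{defn-join-meet}, when $\alpha = \alpha_{i_h,j_k}$ and $\beta = \alpha_{i_{h'},j_{k'}}$ are in $N(\tau)$ and $\alpha\meet\beta$ exists, one has $\alpha\meet\beta = \alpha_{i_{\max(h,h')},\, j_{\max(k,k')}}$, which then automatically lies in $N(\tau)$ by the characterisation above. Applying $\mathcal{D}$ gives $\tilde{\alpha}_{n-\max(k,k')+1,\, n+\max(h,h')-1}$, while a direct computation of $\mathcal{D}(\alpha)\join\mathcal{D}(\beta)$ produces the very same root. For the converse direction, the key remark is that the join $\mathcal{D}(\alpha)\join\mathcal{D}(\beta)$ always exists in $\tilde{\Phi}^+$, and being in $A$ means precisely that the pair $(\max(h,h'),\max(k,k'))$ satisfies $i_{\max(h,h')}\leq j_{\max(k,k')}$, which is exactly the condition for the support intersection to be non-empty, i.e., for $\alpha\meet\beta$ to exist.

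Part (ii) is handled by a symmetric computation. The meet $\mathcal{D}(\alpha)\meet\mathcal{D}(\beta)$ always exists because every element of $\nabla_n$ has $n$ in its support, so the supports always intersect. An easy computation gives $\mathcal{D}(\alpha)\meet\mathcal{D}(\beta) = \tilde{\alpha}_{n-\min(k,k')+1,\, n+\min(h,h')-1}$. On the $N(\tau)$ side, $\alpha\join\beta = \alpha_{i_{\min(h,h')},\, j_{\min(k,k')}}$, which lies in $N(\tau)$ because $i_{\min(h,h')} \leq i_h\leq j_k\leq j_{\min(k,k')}$ by the monotonicity of the sequences and the assumption $\alpha\in N(\tau)$. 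Applying $\mathcal{D}$ one gets precisely the previous root, and the image of the meet lies in $A$ because $A = \mathcal{D}(N(\tau))$.

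No truly hard step is expected: the entire proposition reduces to bookkeeping with $\min$ and $\max$ of the index pairs. The only point that requires care is making sure the existence conditions on meets (on both sides) translate correctly under $\mathcal{D}$; this is where the irreducibility of $\tau$ enters through Proposition \ref{proposition_irreducibleRectangular}, guaranteeing that any $\alpha_{i_h,j_k}$ with $i_h\leq j_k$ is automatically in $N(\tau)$, so that no "missing" meets can occur.
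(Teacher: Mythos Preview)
Your proposal is correct and follows essentially the same approach as the paper: both establish the anti-isomorphism via the explicit chain of equivalences on the index pairs $(h,k)$, and both handle (i) and (ii) by tracking how meets and joins translate under $\mathcal{D}$. The only cosmetic difference is that the paper phrases (i) and (ii) slightly more abstractly (invoking ``the formula follows being $\mathcal{D}$ an anti-isomorphism'' and citing Lemma~\ref{lemma_imageDIdeal} for the $\nnabla$--ideal property), whereas you carry out the $\min/\max$ index computations explicitly; the content is the same.
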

\begin{proof} 
First notice that the following statements are equivalent:
\begin{enumerate}
\item $\alpha = \alpha_{i_h,j_k}\leq\alpha_{i_u,j_v}=\beta$;
\item $i_u\leq i_h$ and $j_v\geq j_k$;
\item $u\leq h$ and $v\leq k$;
\item $\mathcal{D}(\alpha)=\talpha_{n-k+1,n+h-1}\geq\talpha_{n-v+1,n+u-1}=\mathcal{D}(\beta)$.
\end{enumerate}

The equivalence between (1) and (4) shows that $\mathcal{D}$ is a poset anti-isomorphism.

Now we prove (i). If $\alpha\meet\beta$ exists in $\Phi^+$ then it is in $N(\tau)$ since $\tau$ is (irreducible) rectangular and the formula $\mathcal{D}(\alpha\meet\beta)=\mathcal{D}(\alpha)\join \mathcal{D}(\beta)$ follows being $\mathcal{D}$ an anti-isomorphism of posets. On the other hand if $\eta=\mathcal{D}(\alpha)\join \mathcal{D}(\beta)$ is in $A$, then $\eta=\mathcal{D}(\gamma)$ for certain $\gamma\in N(\tau)$, so $\gamma\leq\alpha,\beta$ by $\eta\geq \mathcal{D}(\alpha),\mathcal{D}(\beta)$. This shows that $\alpha\meet\beta$ exists.

For (ii) note that $\alpha\join\beta$ is in $N(\tau)$ by Proposition \ref{proposition_irreducibleRectangular} while $\mathcal{D}(\alpha)\meet \mathcal{D}(\beta)$ exists and is an element of $A$ since $A$ is a $\nnabla$--ideal by Lemma \ref{lemma_imageDIdeal}. Again, the formula $\mathcal{D}(\alpha\join\beta)=\mathcal{D}(\alpha)\meet \mathcal{D}(\beta)$ follows being $\mathcal{D}$ an anti-isomorphism of posets.
\end{proof}

\begin{example}\label{Ex:running6}
Let $n=4$. We consider $\tau=s_1s_2s_3s_4s_1s_2s_1\in W$ as in Example \ref{Ex:running1}. The map $\mathcal{D}$ sending $N(\tau)$ to $\nnabla$ is depicted as follows:
\[
\xymatrix@!C=0pt@!R=0pt{
 1    & 2 & \circ & 4  & &                                     & & \circ & \circ & \circ & \circ & \circ & \circ & \circ\\
      & 3 & \circ & 5  & &                                     & &       & \circ & \circ & 1     & \circ & \circ & \circ\\
      &   & \circ & 6  & & \stackrel{\mathcal{D}}{\longmapsto} & &       &       & \circ & 2     & 3     & \circ & \circ\\
      &   &       & 7  & &                                     & &       &       &       & 4     & 5     & 6     & 7\\
      &   &       &    & &                                     & &       &       &       &       &\circ  & \circ & \circ\\
      &   &       &    & &                                     & &       &       &       &       &       & \circ & \circ\\
      &   &       &    & &                                     & &       &       &       &       &       &       &\circ\\
}
\]
Here we used the same convention as in Example \ref{Ex:running5}.
\end{example}

In Section \ref{section_nablaIdeal} we have associated an element $\ttau:= \tau_A$ in the Weyl group $\tW$ of $\tPhi$ to the $\nnabla$--ideal $A$. Combining the previous proposition with Corollary \ref{proposition_tauAntiIsomorphism} gives:

\begin{corollary}\label{corollary_latticeIsomorphism}
Let $\tau$ be an irreducible rectangular element. The composition
\[
\Phi^+\supseteq N(\tau)\stackrel{\mathcal{D}}{\longrightarrow} A \stackrel{i_A}{\longrightarrow} \wt{N}(\ttau)\subseteq\tPhi^+
\]
is an isomorphism of posets, it preserves the join and, when defined the meet. Moreover $\ttau$ is an irreducible rectangular element of the parabolic subgroup $\tW_{\tI}$ of $\tW$ for a, in general proper, connected subset $\tI$ of $\tDelta$.
\end{corollary}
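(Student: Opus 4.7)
The first part of the corollary follows by combining Proposition \ref{proposition_DantiIsomorphism} and Proposition \ref{proposition_tauAntiIsomorphism}: the composition $\psi := i_A \circ \mathcal{D}$ is a poset isomorphism, since it is the composition of two anti-isomorphisms. For $\alpha, \beta \in N(\tau)$, the join $\alpha \join \beta$ lies in $N(\tau)$ by Proposition \ref{proposition_irreducibleRectangular}, and the meet $\alpha \meet \beta$ lies in $N(\tau)$ whenever it exists by Proposition \ref{proposition_DantiIsomorphism}(i). The map $\mathcal{D}$ swaps join and meet between $N(\tau)$ and $A$, and $i_A$ swaps them back between $A$ and $\wt{N}(\ttau)$, so $\psi$ preserves both operations whenever they are defined.

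The plan for identifying $\tI$ and verifying $\ttau \in \tW_{\tI}$ together with irreducibility is to compute $\ttheta := \psi(\theta)$: since $\psi$ is order-preserving, $\ttheta$ is the maximum of $\wt{N}(\ttau)$. Using $\mathcal{D}(\theta) = \talpha_{n,n}$ and the explicit formula of Proposition \ref{proposition_tauInverseFormula}, one gets $\ttheta = \talpha_{a, b}$ with $a = n + 1 - r_1(A)$ and $b = n - 1 + c_1(A)$. Setting $\tI := [a, b]$ yields a connected subinterval of $\tDelta = [1, 2n-1]$, in general proper since $r_1, c_1 \leq n$. For any $\beta = \talpha_{n-k+1, n+h-1} \in A$ one has $k \leq r_h \leq r_1$ and $h \leq c_k \leq c_1$, so $\supp(\beta) \subseteq \tI$; writing $\ttau$ as a product $s_{\beta_1} s_{\beta_2} \cdots s_{\beta_r}$ with $\beta_i \in A$, as in Section \ref{section_nablaIdeal}, puts $\ttau$ in $\tW_{\tI}$. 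Irreducibility of $\wt{N}(\ttau)$ in $\tPhi_{\tI}^+$ then follows because $\ttheta = \talpha_{a,b}$ is the longest root of $\tPhi_{\tI}^+$ and lies in $\wt{N}(\ttau)$.

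For the rectangularity of $\ttau$, the plan is to appeal to the characterization in Proposition \ref{proposition_irreducibleRectangular}: it suffices to show that $\wt{N}(\ttau)$ equals the set of all existing meets $\alpha \meet \beta$ with $\alpha \in \wt{N}(\ttau)^-$ and $\beta \in \wt{N}(\ttau)^+$ inside $\tPhi_{\tI}^+$. A direct computation via Proposition \ref{proposition_tauInverseFormula} gives
\[
\wt{N}(\ttau)^- = \{\talpha_{a,\, n-k+c_k} \mid 1 \leq k \leq r_1\}, \qquad \wt{N}(\ttau)^+ = \{\talpha_{n+h-r_h,\, b} \mid 1 \leq h \leq c_1\}.
\]
The technical heart of the argument, and the main obstacle, is the equivalence
\[
n + h - r_h \leq n - k + c_k \quad \Longleftrightarrow \quad \talpha_{n-k+1,\, n+h-1} \in A,
\]
which translates meet-existence into membership in the $\nnabla$--ideal $A$. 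The reverse implication is immediate from $k \leq r_h$ and $h \leq c_k$. For the forward one: if $\talpha_{n-k+1, n+h-1} \not\in A$, then by the definitions of $c_k$ and $r_h$ one has $c_k < h$ and $r_h < k$, hence $c_k + r_h \leq (h-1) + (k-1) < h + k$, contradicting the left-hand side. With this equivalence in hand, Proposition \ref{proposition_tauInverseFormula} identifies every such existing meet as $i_A(\talpha_{n-k+1, n+h-1}) \in \wt{N}(\ttau)$, completing the verification.
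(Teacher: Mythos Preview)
Your proof is correct. The first paragraph and the identification of $\tI$ as the support of $\ttheta=\psi(\theta)$ match the paper's argument; the paper simply defines $\tI$ as the union of the supports of the roots in $\wt{N}(\ttau)$ and observes that this union equals $\supp(i_A(\talpha_{n,n}))$ because $i_A$ is an anti-isomorphism.

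For the rectangularity of $\ttau$, both you and the paper appeal to Proposition~\ref{proposition_irreducibleRectangular}, but the routes diverge. The paper transfers the characterization from $N(\tau)$ purely through the meet/join compatibility statements of Propositions~\ref{proposition_tauAntiIsomorphism} and~\ref{proposition_DantiIsomorphism}, including their ``on the other hand'' clauses about existence: one checks that $\psi$ carries $N(\tau)^\pm$ onto $\wt{N}(\ttau)^\pm$ and that existence of the relevant meets is preserved in both directions, so the description of $N(\tau)$ as the set of existing meets of $N(\tau)^-$ and $N(\tau)^+$ pushes forward verbatim. You instead bypass this transfer and compute everything explicitly from Proposition~\ref{proposition_tauInverseFormula}: you write down $\wt{N}(\ttau)^\pm$ concretely and reduce the question of when $\talpha_{a,\,n-k+c_k}\meet\talpha_{n+h-r_h,\,b}$ exists to the combinatorial equivalence $r_h+c_k\geq h+k\Longleftrightarrow\talpha_{n-k+1,\,n+h-1}\in A$. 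Your route is more self-contained and makes the structure of $\wt{N}(\ttau)$ completely explicit; the paper's is shorter because the needed existence statements have already been isolated in the ``on the other hand'' halves of Propositions~\ref{proposition_tauAntiIsomorphism}(ii) and~\ref{proposition_DantiIsomorphism}(i), so no further computation with the $r_h,c_k$ is required.
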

\begin{proof}
By Proposition \ref{proposition_tauAntiIsomorphism} and \ref{proposition_DantiIsomorphism}, this composition is a poset isomorphism. Being $i_A$ an anti-isomorphism, the union $\tI$ of the supports of the roots in $\wt{N}(\ttau)$ is the support of $i_A(\talpha_{n,n})$, hence $\tI$ is connected. Now it is clear that $\wt{N}(\ttau)$ is irreducible rectangular, with respect to $\tI$, by Proposition \ref{proposition_irreducibleRectangular} using the properties of $\mathcal{D}$ and $i_A$ about meet and join stated in Proposition \ref{proposition_tauAntiIsomorphism} and in Proposition \ref{proposition_DantiIsomorphism}.
\end{proof}

\begin{example}
Combining Example \ref{Ex:running5} and \ref{Ex:running6} we have the following picture for $i_A\circ\mathcal{D}$:
\[
\xymatrix@!C=0pt@!R=0pt{
 1    & 2 & \circ & 4 & &             & & \circ & \circ & \circ & \circ & \circ & \circ & \circ\\
      & 3 & \circ & 5 & &             & &       & 1     & \circ & 2     & \circ & \circ & 4\\
      &   & \circ & 6 & & \longmapsto & &       &       & \circ & \circ & \circ & \circ & \circ\\
      &   &       & 7 & &             & &       &       &       & 3     & \circ & \circ & 5\\
      &   &       &   & &             & &       &       &       &       & \circ & \circ & \circ\\
      &   &       &   & &             & &       &       &       &       &       & \circ & 6\\
      &   &       &   & &             & &       &       &       &       &       &       & 7\\
}
\]

\end{example}

\begin{remark}
As we have seen from the above example, the poset $\wt{N}(\ttau)$ is isomorphic to $N(\tau)$. But the positive roots in $\wt{N}(\ttau)$ are \emph{commutative}, i.e. $\alpha+\beta$ is never a root for $\alpha,\beta\in\wt{N}(\ttau)$, which is not the case for $N(\tau)$.
\end{remark}

Let $\lambda\in\Lambda^+$ be a dominant weight. We define
\[
A\ni \mathcal{D}(\alpha) \stackrel{f}{\longmapsto} -\pair{\lambda}{\alpha}\in\Z.
\]
\begin{lemma}
The subset $A$ and the map $f:A\longrightarrow\Z$ fulfil the hypothesis of Proposition \ref{proposition_nablaIdealWeight}.
\end{lemma}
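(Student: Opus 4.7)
The plan is to verify in turn the four hypotheses (i)--(iv) of Proposition \ref{proposition_nablaIdealWeight}, exploiting the poset anti-isomorphism $\mathcal{D}\colon N(\tau)\to A$ from Proposition \ref{proposition_DantiIsomorphism} together with the fact that $A$ is a $\nnabla$-ideal (Lemma \ref{lemma_imageDIdeal}). Throughout, for $\talpha,\tbeta\in A$ I set $\alpha:=\mathcal{D}^{-1}(\talpha)$ and $\beta:=\mathcal{D}^{-1}(\tbeta)$, and write $a_m:=\pair{\lambda}{\alpha_m}\geq 0$, so that $\pair{\lambda}{\alpha_{i,j}}=a_i+a_{i+1}+\cdots+a_j$ in type $\mathsf{A}$.

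Property (i) is immediate from the dominance of $\lambda$. For (ii), since $\mathcal{D}$ reverses the order, $\talpha\leq\tbeta$ in $A$ means $\alpha\geq\beta$ in $N(\tau)$; writing $\alpha=\alpha_{i,j}$ and $\beta=\alpha_{k,l}$ with $i\leq k$ and $j\geq l$, the difference $\pair{\lambda}{\alpha}-\pair{\lambda}{\beta}=(a_i+\cdots+a_{k-1})+(a_{l+1}+\cdots+a_j)$ is non-negative, hence $f(\talpha)\leq f(\tbeta)$.

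The key tool for (iii) and (iv) is the elementary type $\mathsf{A}$ identity: whenever $\alpha\meet\beta$ exists as a root of $\Phi^+$,
\[
\pair{\lambda}{\alpha}+\pair{\lambda}{\beta}=\pair{\lambda}{\alpha\join\beta}+\pair{\lambda}{\alpha\meet\beta},
\]
which follows from decomposing $[i,j]\cup[k,l]$ into the disjoint union of its convex hull and its intersection when the intervals overlap. For (iii), the hypothesis $\talpha\join\tbeta\in A$ forces the existence of $\alpha\meet\beta$ in $\Phi^+$ by the second half of Proposition \ref{proposition_DantiIsomorphism}(i), and one has $\mathcal{D}(\alpha\meet\beta)=\talpha\join\tbeta$; Proposition \ref{proposition_DantiIsomorphism}(ii) simultaneously gives $\talpha\meet\tbeta\in A$ and $\mathcal{D}^{-1}(\talpha\meet\tbeta)=\alpha\join\beta$. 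Substituting these equalities into the identity above yields the required relation for $f$.

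The delicate step is (iv), where the hypothesis does not directly provide $\alpha\meet\beta$. I will split into two cases. If $\alpha\meet\beta$ exists in $\Phi^+$, the identity above reduces $f(\talpha)+f(\tbeta)-f(\talpha\meet\tbeta)$ to $-\pair{\lambda}{\alpha\meet\beta}$; its strict negativity, combined with rectangularity condition (R1) for $N(\tau)$, places $\alpha\meet\beta\in N(\tau)$, so $\talpha\join\tbeta=\mathcal{D}(\alpha\meet\beta)\in A$. If instead $\alpha=\alpha_{i,j}$ and $\beta=\alpha_{k,l}$ have disjoint supports (say $j<k$), then $\alpha\join\beta=\alpha_{i,l}$ and a direct expansion gives $f(\talpha)+f(\tbeta)-f(\talpha\meet\tbeta)=a_{j+1}+\cdots+a_{k-1}\geq 0$, contradicting the hypothesis, so this subcase is vacuous. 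The main bookkeeping burden will be to keep straight the meet/join swap between $N(\tau)$ and $A$ and the distinction between meets in $\nnabla$ (which always exist) and meets in $\Phi^+$ (which may not).
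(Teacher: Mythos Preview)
Your proof is correct and follows essentially the same route as the paper's: both verify (i)--(iv) using that $\mathcal{D}$ is a poset anti-isomorphism, that $A$ is a $\nnabla$-ideal, and the type $\mathsf{A}$ identity $\pair{\lambda}{\alpha}+\pair{\lambda}{\beta}=\pair{\lambda}{\alpha\join\beta}+\pair{\lambda}{\alpha\meet\beta}$ when the meet exists. Your treatment of (iv) via an explicit case split (meet exists versus disjoint supports) is just the contrapositive of the paper's support argument; one cosmetic remark is that in your first subcase the strict negativity is not actually needed to conclude $\alpha\meet\beta\in N(\tau)$, since (R1) alone gives this once the meet exists.
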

\begin{proof} By Lemma \ref{lemma_imageDIdeal} $A$ is a $\nnabla$--ideal and, being $\lambda$ dominant, it is clear that $f(\mathcal{D}(\alpha))\leq 0$ for all $\alpha \in N(\tau)$ and (i) in Proposition \ref{proposition_nablaIdealWeight} holds.

If $\mathcal{D}(\alpha)\leq \mathcal{D}(\beta)$ are elements of $A$ then by Proposition \ref{proposition_DantiIsomorphism} we have $\alpha\geq\beta$. So, being $\lambda$ dominant, $f(\mathcal{D}(\alpha))\leq f(\mathcal{D}(\beta))$ and also (ii) in Proposition \ref{proposition_nablaIdealWeight} holds.

Now we prove that the hypothesis (iii) is fulfilled. Let $\alpha,\beta\in N(\tau)$ and $\talpha=\mathcal{D}(\alpha)$, $\tbeta=\mathcal{D}(\beta)$. If $\talpha\join\tbeta\in A$ then $\alpha\meet\beta\in N(\tau)$ by (i) of Proposition \ref{proposition_DantiIsomorphism} and $\mathcal{D}(\alpha\meet\beta)=\talpha\join\tbeta$. Moreover $\talpha\meet\tbeta\in A$, being $A$ a $\nnabla$--ideal, $\alpha\join\beta\in N(\tau)$, being $\tau$ irreducible rectangular, and $\mathcal{D}(\alpha\join\beta) = \talpha\meet\tbeta$ by (ii) of Proposition \ref{proposition_DantiIsomorphism}. Finally since $\alpha\meet\beta + \alpha\join\beta=\alpha+\beta$, we find
\[
\begin{array}{rcl}
f(\talpha\join\tbeta) & = & -\pair{\lambda}{(\alpha\meet\beta)}\\
 & = & -\pair{\lambda}{(\alpha+\beta-\alpha\join\beta)}\\
 & = & -\pair{\lambda}{\alpha}-\pair{\lambda}{\beta}+\pair{\lambda}{(\alpha\join\beta)}\\
 & = & f(\talpha) + f(\tbeta) - f(\talpha\meet\tbeta),
\end{array}
\]
and (iii) is proved.

Finally, in order to prove that (iv) holds, note that, using the same notations as above, 
$$f(\talpha)+f(\tbeta)-f(\talpha\meet\tbeta)=-\pair{\lambda}{(\alpha+\beta-(\alpha\join\beta))}.$$ 
Since the support of $\alpha\join\beta$ is the smallest convex subset containing $\supp(\alpha)\cup\supp(\beta)$, we see that if this integer is negative then the intersection of $\supp(\alpha)$ and $\supp(\beta)$ is not empty. But $\tau$ is rectangular, hence this implies that $\alpha\meet\beta$ exists in $\Phi^+$; hence it is in $N(\tau)$ by being $\tau$ rectangular and so $\talpha\join\tbeta=\mathcal{D}(\alpha\meet\beta)$ is an element of $A$.
\end{proof}

We can now apply Proposition \ref{proposition_nablaIdealWeight} and conclude as in the following corollary.
\begin{corollary}\label{corollary_finalResultCombinatorics}
Let $\tau$ be an irreducible rectangular element of $W$ and $\lambda$ be a dominant weight in $\Lambda^+$. Then there exists a dominant weight $\tlambda\in\tLambda^+$ and an element $\ttau$ of the Weyl group of $\tPhi$ such that: $\pair{\ttau\tlambda}{\talpha} = -\pair{\lambda}{\alpha}$ for $\talpha\in A$, with $\talpha=\mathcal{D}(\alpha)$ (so these scalar products are non-positive), and $\pair{\ttau\tlambda}{\beta} \geq 0$ if $\beta\in\tPhi^+\setminus A$.
\end{corollary}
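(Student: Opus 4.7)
The plan is to observe that this Corollary is just the packaging of the preceding Lemma (which verified the four hypotheses of Proposition \ref{proposition_nablaIdealWeight} for the pair $(A,f)$ attached to $(\tau,\lambda)$) with that Proposition itself, together with a short translation via the bijection $\mathcal{D}$. So I expect the proof to fit in essentially one paragraph.

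Concretely, first I would define $\ttau := \tau_A$, where $A = \mathcal{D}(N(\tau))$ is the $\nnabla$-ideal attached to the irreducible rectangular element $\tau$ by Lemma \ref{lemma_imageDIdeal}, and I would define $f : A \longrightarrow \Z$ by $f(\mathcal{D}(\alpha)) := -\pair{\lambda}{\alpha}$ for $\alpha \in N(\tau)$. The preceding Lemma, whose proof has just been given, checks that $f$ fulfils the hypotheses (i)--(iv) of Proposition \ref{proposition_nablaIdealWeight}: non-positivity, monotonicity, the modular identity and the implication concerning the join, all follow from the fact that $\mathcal{D}$ is a poset anti-isomorphism exchanging meet and join (Proposition \ref{proposition_DantiIsomorphism}) and from the irreducible rectangularity of $\tau$ (so that $\alpha\meet\beta$ exists in $\Phi^+$ whenever the supports of $\alpha,\beta\in N(\tau)$ overlap, and lies in $N(\tau)$).

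Having verified these hypotheses, I would apply Proposition \ref{proposition_nablaIdealWeight} to obtain a weight $\mu \in \tLambda$ such that $\pair{\mu}{\talpha} = f(\talpha)$ for every $\talpha \in A$, $\pair{\mu}{\beta} \geq 0$ for every $\beta \in \tPhi^+ \setminus A$, and such that $\ttau^{-1}\mu$ is dominant. Setting $\tlambda := \ttau^{-1}\mu \in \tLambda^+$, we have $\ttau\tlambda = \mu$, and the two asserted identities read off at once: for $\talpha = \mathcal{D}(\alpha) \in A$ we get
\[
\pair{\ttau\tlambda}{\talpha} \;=\; \pair{\mu}{\talpha} \;=\; f(\talpha) \;=\; -\pair{\lambda}{\alpha},
\]
while for $\beta \in \tPhi^+ \setminus A$ we get $\pair{\ttau\tlambda}{\beta} = \pair{\mu}{\beta} \geq 0$.

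There is no real obstacle here: all of the work has already been carried out in the preceding Lemma and in Proposition \ref{proposition_nablaIdealWeight}. The only thing to be attentive to is that the combinatorial data of $\tlambda$ and $\ttau$ should be \emph{defined} via this procedure (through the $\nnabla$-ideal $A$ and the function $f$), so that all subsequent constructions in Section \ref{Sec:Proof} are consistent with the combinatorial dictionary between $N(\tau)$ and $\wt{N}(\ttau)$ established in Corollary \ref{corollary_latticeIsomorphism}.
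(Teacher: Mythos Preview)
Your proposal is correct and follows exactly the paper's approach: the corollary is stated without a separate proof in the paper, being the immediate consequence of applying Proposition~\ref{proposition_nablaIdealWeight} to the pair $(A,f)$ whose hypotheses were verified in the preceding Lemma, and then setting $\tlambda := \ttau^{-1}\mu$. Your explicit unpacking of this last step (so that $\ttau\tlambda = \mu$) is precisely the intended reading.
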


A particular case will be important in our application to degenerate Schubert varieties.
\begin{proposition}\label{proposition_finalResultFundamental}
Let $\tau$ be an irreducible rectangular element of $W$ and let $\lambda = \varpi_r$ be a fundamental weight. Then, with notation as in the previous corollary, we can choose $\tlambda$ to be a fundamental weight $\wt{\varpi}_{r'}$. In particular $r'=n-v+u$ where $v=|\{\alpha_{1,j}\in N(\tau)\,|\,j\geq r\}|$ and $u=|\{\alpha_{i,n}\in N(\tau)\,|\,i\leq r\}|$.
\end{proposition}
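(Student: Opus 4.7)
The plan is to directly exhibit the fundamental weight $\wt{\varpi}_{r'}$ with $r' = n - v + u$ and verify the conditions of Corollary \ref{corollary_finalResultCombinatorics}, rather than trying to compute the weight $\mu$ produced by the general construction in Proposition \ref{proposition_nablaIdealWeight} and then identify $\ttau^{-1}\mu$ post hoc. Throughout, to avoid clashing with the $r$ denoting $|N(\tau)^+|$ from Section \ref{Sec:D}, we write the given fundamental weight as $\lambda = \varpi_p$.

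The first step is to observe that when $\lambda = \varpi_p$ the function $f$ on $A$ is Boolean: for $\talpha = \mathcal{D}(\alpha_{i_h, j_k})$ one has $f(\talpha) = -1$ exactly when $i_h \leq p \leq j_k$ and $f(\talpha) = 0$ otherwise. By the monotonicity of the sequences $i_\bullet$ and $j_\bullet$, this happens precisely when $h \leq u$ and $k \leq v$, matching the very definitions of $u$ and $v$ in the statement. Since $i_1 = 1 \leq p$ and $j_1 = n \geq p$, both counts $u, v$ lie in $\{1, \ldots, n\}$, so $r' = n - v + u$ satisfies $1 \leq r' \leq 2n-1$ and $\wt{\varpi}_{r'}$ is a well-defined fundamental weight of $\tPhi$.

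The next step is to set $\tlambda := \wt{\varpi}_{r'}$ and verify the two defining conditions. For $\tbeta \in \tPhi^+ \setminus A = \wt{P}(\ttau^{-1})$ one has $\ttau^{-1}\tbeta \in \tPhi^+$, so dominance of $\wt{\varpi}_{r'}$ yields $\pair{\ttau\tlambda}{\tbeta} = \pair{\wt{\varpi}_{r'}}{\ttau^{-1}\tbeta} \geq 0$ for free. The nontrivial condition is $\pair{\ttau\wt{\varpi}_{r'}}{\talpha} = f(\talpha)$ for every $\talpha = \talpha_{n-k+1, n+h-1} \in A$. Using Proposition \ref{proposition_tauInverseFormula}, namely $\ttau^{-1}\talpha = -\talpha_{n+h-r_h, n-k+c_k}$, the identity to prove becomes
\[
\pair{\wt{\varpi}_{r'}}{\talpha_{n+h-r_h, n-k+c_k}} = 1 \textrm{ if and only if } i_h \leq p \textrm{ and } p \leq j_k,
\]
where for $A = \mathcal{D}(N(\tau))$ the row/column counts unwind to $r_h = |\{k : j_k \geq i_h\}|$ and $c_k = |\{h : i_h \leq j_k\}|$.

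The combinatorial heart of the argument is then the pair of equivalences $n + h - r_h \leq n - v + u \Leftrightarrow i_h \leq p$ and $n - v + u \leq n - k + c_k \Leftrightarrow p \leq j_k$. The first rewrites as $h + v \leq r_h + u$: if $i_h \leq p$ then $h \leq u$ and the inclusion $\{k : j_k \geq p\} \subseteq \{k : j_k \geq i_h\}$ forces $v \leq r_h$, giving the inequality; conversely if $i_h > p$ then $h \geq u + 1$ strictly and the inclusion $\{k : j_k \geq i_h\} \subseteq \{k : j_k \geq p\}$ yields $r_h \leq v$, which together give the strict opposite inequality. The second equivalence follows by an entirely symmetric argument exchanging the roles of the two sequences. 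Combining the two equivalences produces exactly the target identity and finishes the proof. No step looks like a serious obstacle: the only care required is in the sign bookkeeping and in keeping the double indexing $(h,k) \leftrightarrow (i_h, j_k)$ consistent throughout.
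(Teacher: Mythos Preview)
Your proof is correct and takes a genuinely different route from the paper's. The paper follows the general construction of Proposition~\ref{proposition_nablaIdealWeight}: it computes the intermediate weight $\mu$ explicitly as $\wt{\varpi}_{n-v}-\wt{\varpi}_n+\wt{\varpi}_{n+u}$, observes that $\pair{\mu}{\talpha}\geq -1$ for every $\talpha\in\tPhi^+$ (so $\mu$ is minuscule), and then identifies the dominant representative $\ttau^{-1}\mu$ of its $\tW$--orbit as $\wt{\varpi}_{n-v+u}$. You instead bypass $\mu$ entirely: you posit $\tlambda=\wt{\varpi}_{r'}$ from the outset and verify the two conditions of Corollary~\ref{corollary_finalResultCombinatorics} directly, using the explicit formula for $\ttau^{-1}$ on $\nnabla$ from Proposition~\ref{proposition_tauInverseFormula} together with the combinatorial equivalences $h+v\leq r_h+u\Leftrightarrow i_h\leq p$ and $k+u\leq c_k+v\Leftrightarrow p\leq j_k$. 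The paper's route makes clearer that the fundamental case is a specialisation of the general machinery and avoids re-proving non-negativity on $\tPhi^+\setminus A$; your route is more self-contained and makes the value $r'=n-v+u$ visible from the start rather than emerging at the end of an orbit computation.
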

\begin{proof}
We follow the construction of $\mu$ as in the proof of Proposition \ref{proposition_nablaIdealWeight}. First of all $\theta\in N(\tau)$, being $\tau$ irreducible rectangular, and $\mathcal{D}(\theta)=\talpha_{n,n}$ since $\mathcal{D}$ is an anti-isomorphism of posets; in particular $\pair{\mu}{\talpha_{n,n}}=-1$. Moreover $\pair{\mu}{\talpha}=0,-1$ for all $\talpha\in A$ by the definition using that $\lambda$ is fundamental.

Now let $\talpha_{i,j}\in A$. By $\talpha_{i,j}+\talpha_{n,n}=\talpha_{i,n}+\talpha_{n,j}$ we find: $\pair{\mu}{\talpha_{i,j}}=-1$ if and only if $\langle\mu,\talpha_{i,n}^\vee\rangle=\langle\mu,\talpha_{n,j}^\vee\rangle=-1$. It is then clear that $\mu = \wt{\varpi}_{n-v}-\wt{\varpi}_n+\wt{\varpi}_{n+u}$, with $u$ and $v$ as in the Proposition.

Hence $\pair{\mu}{\talpha}\geq -1$ for all $\talpha\in\tPhi^+$ and we conclude that $\tlambda$ is fundamental. Since $\tlambda$ is the unique dominant weight in the $\tW$--orbit of $\mu$ it is easy to check that $\tlambda = \wt{\varpi}_{n-v+u}$.
\end{proof}

\section{Proof of Theorem \ref{Thm:Schubert}}\label{Sec:Proof}

The proof of the theorem is executed in the following steps. We fix an irreducible rectangular element $\tau\in W$.

\subsection{Dimension estimation}

The first step of the proof is to show that the corresponding Demazure modules have the same dimension. The proof makes fully use of the basis constructed in \cite{FFL, Fou} and its parametrisation by the Feigin-Fourier-Littelmann-Vinberg (FFLV) polytope, as well as all results we have developed for rectangular elements.

\begin{proposition}\label{Prop:DimEst}
If $\tau\in W$ is an irreducible triangular element, then
\[
\operatorname{dim} V_\tau(\lambda) = \operatorname{dim} V_{\widetilde{\tau}}(\widetilde{\lambda}).
\]
\end{proposition}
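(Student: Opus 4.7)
The strategy is to compute both dimensions as the number of lattice points in suitable FFLV-type polytopes and then transport one polytope to the other via the poset isomorphism of Corollary \ref{corollary_latticeIsomorphism}.

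\textbf{Reduction to an abelian setting on both sides.} Since PBW degeneration preserves dimension, $\dim V_\tau(\lambda)=\dim V_\tau^{-,a}(\lambda)$. On the $\mathfrak{sl}_{2n}$ side, the remark following Corollary \ref{corollary_latticeIsomorphism} observes that $\wt{N}(\ttau)$ is commutative, so the Lie subalgebra $\wt{\mathfrak{n}}_{\ttau}^-$ is already abelian. Consequently $V_{\ttau}^-(\tlambda)=V_{\ttau}^{-,a}(\tlambda)$, and the latter has the same dimension as the genuine Demazure module $V_{\ttau}(\tlambda)$. Thus it suffices to show that $\dim V_\tau^{-,a}(\lambda)=\dim V_{\ttau}^{-,a}(\tlambda)$.

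\textbf{Two polytope descriptions.} For the $\mathfrak{sl}_{n+1}$ side I would invoke the monomial basis constructed in \cite{Fou}: $V_\tau^{-,a}(\lambda)$ has a basis parametrised by the lattice points $(s_\alpha)_{\alpha\in N(\tau)}\in\Z_{\geq 0}^{N(\tau)}$ lying in the face of the FFLV polytope cut out by setting to zero the coordinates outside $N(\tau)$. The defining inequalities are the classical FFLV Dyck-path inequalities $\sum_{\alpha\in p}s_\alpha \leq \langle \lambda,\alpha^\vee_{i_p,j_p}\rangle$ for Dyck paths $p$ contained in $N(\tau)$, and those outside $N(\tau)$ become vacuous on the face. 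For the $\mathfrak{sl}_{2n}$ side, since $\wt{\mathfrak{n}}_{\ttau}^{-,a}\cong \wt{\mathfrak{n}}_{\ttau}^-$ is abelian, the module $V_{\ttau}^{-,a}(\tlambda)=S(\wt{\mathfrak{n}}_{\ttau}^-)\cdot v_{\tlambda}$ is a quotient of a polynomial ring in $|\wt{N}(\ttau)|$ variables, and applying \cite{FFL} to $\SL_{2n}$ yields an analogous monomial basis indexed by the lattice points $(\widetilde{s}_{\tbeta})_{\tbeta\in\wt{N}(\ttau)}$ satisfying the FFLV inequalities for $\tlambda$ whose relevant coordinates lie in $\wt{N}(\ttau)$.

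\textbf{Transport via the poset isomorphism.} The candidate bijection is
\[
(s_\alpha)_{\alpha\in N(\tau)}\;\longmapsto\;\bigl(\widetilde{s}_{\Psi(\alpha)}:=s_\alpha\bigr)_{\Psi(\alpha)\in\wt{N}(\ttau)},\qquad \Psi:=i_A\circ\mathcal{D}.
\]
By Corollary \ref{corollary_latticeIsomorphism}, $\Psi$ is an isomorphism of posets preserving meets and joins, so Dyck paths in $N(\tau)$ correspond bijectively to Dyck paths in $\wt{N}(\ttau)$. By Corollary \ref{corollary_finalResultCombinatorics} the right-hand sides of the two families of inequalities match after identifying the relevant pairings of $\lambda$ with roots in $N(\tau)$ with those of a suitable translate of $\tlambda$ with roots in $\wt{N}(\ttau)$. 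Combining these two compatibilities, the FFLV polytope face for $(\ttau,\tlambda)$ has exactly the same integer points as the one for $(\tau,\lambda)$, yielding the claimed equality of dimensions.

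\textbf{Main obstacle.} The delicate part is the translation of the FFLV Dyck-path inequalities across $\Psi$: on the $\mathfrak{sl}_{n+1}$ side the inequalities are phrased in terms of the dominance order on $N(\tau)$ with prescribed meets and joins, whereas on the $\mathfrak{sl}_{2n}$ side the inequalities are intrinsically those of the FFLV polytope of $\tlambda$ restricted to $\wt{N}(\ttau)$. One must check carefully that the FFLV polytope of $\tlambda$ does not impose additional active inequalities on the face in question, beyond those coming from Dyck paths mirroring those in $N(\tau)$; this amounts to showing that inequalities indexed by Dyck paths meeting $\wt{\Phi}^+\setminus\wt{N}(\ttau)$ become non-active once the outside coordinates vanish, which follows from the non-negativity of the pairings in Corollary \ref{corollary_finalResultCombinatorics} together with the fact that $\Psi$ preserves the combinatorics of chains.
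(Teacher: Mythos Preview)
Your overall strategy coincides with the paper's: both sides are counted by lattice points in a polytope attached to the subposet $N(\tau)$ (resp.\ $\wt{N}(\ttau)$) with markings coming from $\lambda$ (resp.\ $\tlambda$), and the poset isomorphism $\Psi=i_A\circ\mathcal{D}$ together with Corollary~\ref{corollary_finalResultCombinatorics} matches the two polytopes. The paper phrases this as an isomorphism of \emph{marked chain polytopes} (via \cite{ABS}), which is equivalent to your Dyck-path formulation.

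There is, however, a genuine gap on the $\SL_{2n}$ side. You invoke \cite{FFL} to obtain a monomial basis of $V_{\ttau}^{-,a}(\tlambda)$ indexed by FFLV lattice points supported on $\wt{N}(\ttau)$. But \cite{FFL} only gives a basis of the full degenerate module $V^a(\tlambda)$; it does not tell you that the monomials supported on $\wt{N}(\ttau)$ span the Demazure submodule, nor that they are linearly independent there. That statement is precisely the content of \cite{Fou} for \emph{triangular} elements, and it is what you need. The paper closes this gap in one line: by Corollary~\ref{corollary_latticeIsomorphism}, $\ttau$ is rectangular, hence triangular, so \cite[Proposition~2, Corollary~2]{Fou} applies verbatim to $V_{\ttau}(\tlambda)$ and furnishes the required basis parametrised by the marked chain polytope of the subposet $\wt{N}(\ttau)$.

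Once you use \cite{Fou} on both sides, your ``Main obstacle'' paragraph evaporates: the polytopes are defined \emph{intrinsically} on the subposets $N(\tau)$ and $\wt{N}(\ttau)$, so there are no Dyck paths leaving the subposet to worry about, and no appeal to non-negativity of pairings outside $A$ is needed. Likewise, the reduction-to-abelian paragraph is unnecessary: \cite{Fou} already computes $\dim V_\tau(\lambda)$ and $\dim V_{\ttau}(\tlambda)$ directly.
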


\begin{proof}
Recall the marked chain polytope of the Gelfand-Tsetlin poset (for example \cite[Figure 1]{ABS}) associated to $\Phi^+$ and a marking vector $\lambda \in P^+$. By \cite[Proposition 2, Corollary 2]{Fou}, there exists a monomial basis of $V_\tau(\lambda)$ parametrised by the lattice points in the marked chain polytope associated to the subposet induced from $N(\tau) \subseteq \Phi^+$ and marking vector $\lambda$. \\
By Corollary~\ref{corollary_latticeIsomorphism}, $\ttau$ is a rectangular element hence a triangular element. With the same argument, there exists a monomial basis of $V_\ttau(\tlambda)$ parametrised by the lattice points in the marked chain polytope associated to the subposet induced from $\wt{N}(\ttau) \subseteq \widetilde{\Phi}^+$ and marking vector $\tlambda$. \\
Again by Corollary~\ref{corollary_latticeIsomorphism}, together with Corollary~\ref{corollary_finalResultCombinatorics} and Proposition~\ref{proposition_finalResultFundamental}, the map $i_A \circ \mathcal{D}$ together with the assignment $\lambda \longmapsto \tlambda$, defines an isomorphism of marked chain polytopes. \\
As conclusion, the two marked chain polytopes have the same number of lattice points, which proves the equality on the dimensions.
\end{proof}

\subsection{The case of fundamental weights}
Notice that the negative roots contained in $\mathfrak{n}_\tau^-$ are $-N(\tau)$. 

Since both Lie algebras $\mathfrak{n}_\tau^{-,a}$ and $\wt{\mathfrak{n}}_\ttau^+$ are abelian, the map $\mathcal{D}$ defined in the beginning of Section \ref{Sec:D} gives an isomorphism of Lie algebras
$$\Psi_\tau:\mathfrak{n}_\tau^{-,a}\stackrel{\sim}{\longrightarrow}\wt{\mathfrak{n}}_\ttau^+$$
sending the root space of weight $\alpha$ to that of weight $-\mathcal{D}(\alpha)$.

Then $\Psi_\tau$ induces an isomorphism of algebras
$$\Psi_{\tau}:S(\mathfrak{n}_\tau^{-,a})\stackrel{\sim}{\longrightarrow} S(\wt{\mathfrak{n}}_\ttau^+).$$

Let $\varpi_r$ be a fundamental weight of $G$. Recall that we have defined the cyclic module 
$$V_\tau^{-,a}(\varpi_r):=S(\mathfrak{n}_\tau^{-,a})\cdot v_{\varpi_r}^a \text{ (resp. }V^{a}(\varpi_r):=S(\mathfrak{n}^{-,a}\text)\cdot v_{\varpi_r}^a\text{)}.$$ 
Let $I_\tau^a(\varpi_r)$ (resp. $I^a(\varpi_r)$) denote its defining ideal (see Section \ref{Sec:PBW}) in $S(\mathfrak{n}_\tau^{-,a})$ (resp. $S(\mathfrak{n}^{-,a}$)). 

Let $\Phi^+_r$ denote the nilpotent radical consisting of all roots in $\Phi^+$ supported at $r$. We first study the defining ideal $I^a(\varpi_r)$.

\begin{lemma}\label{Lem:DefIdeal}
The defining ideal $I^a(\varpi_r)$ is generated by the following elements:
\begin{itemize}
\item[(G1)] $f_\alpha$ for $\alpha\in\Phi^+\setminus\Phi_r^+$;
\item[(G2)] $f_\alpha f_\beta+f_{\alpha\join\beta} f_{\alpha\meet\beta}$ for $\alpha,\beta\in\Phi_r^+$ such that both $\alpha\join\beta$ and $\alpha\meet\beta$ exist.
\end{itemize}
\end{lemma}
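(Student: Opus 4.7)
The plan is to prove the lemma by two inclusions: first, that every generator in (G1) and (G2) lies in the defining ideal $I^a(\varpi_r)$, giving $J\subseteq I^a(\varpi_r)$ for the ideal $J$ they generate; and second, that the quotient $S(\mathfrak{n}^{-,a})/J$ has dimension at most $\binom{n+1}{r}=\dim V^a(\varpi_r)$, which then forces equality.

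For the first inclusion I would handle the two families of generators separately. Since $\varpi_r$ is minuscule for $\mathfrak{sl}_{n+1}$, one has $\langle\varpi_r,\alpha^\vee\rangle=0$ for every $\alpha\in\Phi^+\setminus\Phi_r^+$, so $f_\alpha v_{\varpi_r}=0$ in $V(\varpi_r)$ by standard $\mathfrak{sl}_2$-theory; passing to the associated graded yields $f_\alpha\cdot v_{\varpi_r}^a=0$, placing $f_\alpha$ in $I^a(\varpi_r)$ and settling (G1). For (G2), I would use the concrete realization $V(\varpi_r)\cong\Lambda^r\mathbb{C}^{n+1}$ with highest weight vector $v_{\varpi_r}=e_1\wedge\cdots\wedge e_r$ and $f_{i,j}$ acting as the elementary matrix $E_{j+1,i}$, so that each $f_{i,j}$ is the wedge-derivation sending $e_i$ to $e_{j+1}$ and killing all other basis vectors. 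For $\alpha=\alpha_{i,j}$ and $\beta=\alpha_{k,l}$ in $\Phi_r^+$, a direct computation shows that both $f_\alpha f_\beta v_{\varpi_r}$ and $f_{\alpha\join\beta}f_{\alpha\meet\beta}v_{\varpi_r}$ produce the sorted wedge of the basis vectors indexed by $(\{1,\ldots,r\}\setminus\{i,k\})\cup\{j+1,l+1\}$, up to a sign; a careful count of inversions shows these two signs differ by $-1$, so the element of (G2) vanishes on $v_{\varpi_r}$ in $V(\varpi_r)$ and therefore also on $v_{\varpi_r}^a$ in $V^a(\varpi_r)$.

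For the codimension bound, I would first pass to the quotient by (G1) to reduce to the polynomial ring on $\{f_\alpha:\alpha\in\Phi_r^+\}$ modulo the quadratic relations coming from (G2). Rewritten as $f_\alpha f_\beta=-f_{\alpha\join\beta}f_{\alpha\meet\beta}$, these are straightening rules analogous to the degenerate Pl\"ucker relations for the Grassmannian, and together with their degenerate instances they allow every monomial to be expressed, modulo $J$, as a linear combination of squarefree monomials supported on antichains of $\Phi_r^+$. A standard monomial argument with respect to a term order refining the dominance order establishes that such antichain-supported monomials span the quotient. Since $\Phi_r^+$ is the $r\times(n+1-r)$ rectangular poset, its antichains are in bijection with its order ideals, which number $\binom{n+1}{r}$; this matches $\dim V^a(\varpi_r)$ and forces $J=I^a(\varpi_r)$.

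The main obstacle will be the sign bookkeeping in (G2), where the two expressions give the same sorted wedge only up to signs coming from the anticommutativity of the wedge product, so the desired cancellation must be verified by a careful inversion count; in addition, the standard monomial argument in the codimension step needs a termination proof for the straightening rewriting, via a well-founded order on monomials that strictly decreases under each application of the relation.
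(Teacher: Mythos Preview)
Your approach is sound and genuinely different from the paper's. The paper does not argue via the two inclusions you propose; instead it invokes \cite[Theorem~4]{FFL}, which describes $I^a(\varpi_r)$ as the ideal generated by the $U(\mathfrak{n}^+)$--translates of the powers $f_\alpha^{\langle\varpi_r,\alpha^\vee\rangle+1}$ under the differential action $e_\gamma\circ f_\beta=f_{\beta-\gamma}$, and then computes these translates explicitly, checking that the elements produced are of the form (G1) or (G2) and that this set is stable under further application of $U(\mathfrak{n}^+)$. Your route trades this citation for two elementary ingredients: the concrete model $V(\varpi_r)\simeq\Lambda^r\mathbb{C}^{n+1}$, in which the vanishing of each (G2) element on $v_{\varpi_r}$ is a short sign check, and a straightening/dimension bound on $S(\mathfrak{n}^{-,a})/J$. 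The paper's proof is shorter because the FFL theorem absorbs the hard work; yours is self-contained and makes the Pl\"ucker-type combinatorics visible, at the price of the sign and termination bookkeeping you already flag.

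Two small points to tighten in your sketch. First, the rewriting $f_\alpha f_\beta=-f_{\alpha\join\beta}f_{\alpha\meet\beta}$ replaces an \emph{incomparable} pair by a \emph{comparable} one, so iteration drives monomials toward products supported on \emph{chains}, not antichains; either target suffices for the bound, since strict chains and antichains in the $r\times(n{+}1{-}r)$ rectangle are both counted by $\binom{n+1}{r}$, but the direction of the rewriting and the stated spanning set should agree. Second, your inversion count gives opposite signs only when $\alpha$ and $\beta$ are incomparable: if $\alpha<\beta$ strictly with no shared endpoint then $\{\alpha,\beta\}=\{\alpha\meet\beta,\alpha\join\beta\}$, the (G2) expression collapses to $2f_\alpha f_\beta$, and this does \emph{not} vanish on $v_{\varpi_r}$. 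So (G2) should be read as ranging over incomparable pairs together with the degenerate $\alpha=\beta$ and shared-endpoint cases (which do vanish); the paper's computation from $f_\alpha^2$ in fact only ever produces such pairs.
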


\begin{proof}
By \cite[Theorem 4]{FFL}, the defining ideal $I^a(\varpi_r)$ is given by:
$$I^a(\varpi_r)=S(\mathfrak{n}^{-,a})\cdot\mathrm{span}\left\{U(\mathfrak{n}^+)\circ f_\alpha^{\pair{\varpi_r}{\alpha}+1}\mid \alpha\in\Phi^+\right\},$$
where elements in $\mathfrak{n}^+$ act on $\mathfrak{n}^-$ as derivations by: for $\alpha,\beta\in\Phi^+$,
\[
e_\alpha\circ f_\beta =
\left\{
\begin{array}{ll}
f_{\beta-\alpha}, & \textrm{if }\beta-\alpha\in\Phi^+,\\
0, & \textrm{otherwise.}\\
\end{array}
\right.
\]

Notice that this action preserves the PBW degree. We compute $U(\mathfrak{n}^+)\circ f_\alpha^{\pair{\varpi_r}{\alpha}+1}$:
\begin{enumerate}
\item Let $\alpha\in\Phi^+\setminus\Phi^+_r$. Since the action of $U(\mathfrak{n}^+)$ preserves the PBW degree, $\{U(\mathfrak{n}^+)\circ f_\alpha\}$ is a linear span of elements of PBW degree one. This shows that
$$\{U(\mathfrak{n}^+)\circ f_\alpha\mid\alpha\in\Phi^+\setminus\Phi^+_r\}=\mathrm{span}\{f_\alpha\mid\alpha\in\Phi^+\setminus\Phi^+_r\}.$$
\item We compute $U(\mathfrak{n}^+)\circ f_\alpha^2$ for $\alpha\in\Phi^+_r$. 

Let $\alpha=\alpha_{p,q}$ for $p\leq r\leq q$. We consider $e_{s,t}\circ f_{p,q}^2$: it is non-zero if and only if either $s=p$, $t<q$ or $p<s$, $t=q$. By symmetry we consider the case $s=p$ and $t<q$, then $e_{p,t}\circ f_{p,q}^2=f_{t+1,q}f_{p,q}$. If $r\leq t$ then it is in the ideal generated by the elements in (G1). We assume that $t<r$, which means that $f_{t+1,q}f_{p,q}$ is of the form in (G2).

We consider $e_{i,j}\circ f_{t+1,q}f_{p,q}$: similar to the above argument, the only case we need to consider is $j=q$ and $i\geq r+1$. In this case,
$$e_{i,q}\circ f_{t+1,q}f_{p,q}=f_{t+1,i-1}f_{p,q}+f_{t+1,q}f_{p,i-1},$$
the right hand side has the form in (G2).

It remains to show that for $\alpha,\beta\in\Phi_r^+$, $\gamma\in\Phi^+$,
\begin{equation}\label{Action}
e_\gamma\circ (f_\alpha f_\beta+f_{\alpha\meet\beta}f_{\alpha\join\beta})
\end{equation}
is in the ideal generated by (G1) and (G2). According to the above arguments, it suffices to consider the case where $\alpha=\alpha_{i,j}$, $\beta=\alpha_{k,\ell}\in\Phi_r^+$ with $1\leq i<k\leq j<\ell\leq n$, then $\alpha\join\beta=\alpha_{i,\ell}$ and $\alpha\meet\beta=\alpha_{k,j}$. 

Consider the case where $\gamma=\alpha_{i,t}$. 
\begin{itemize}
\item If $t\geq r$ then the element in (\ref{Action}) is in the ideal generated by (G1). 
\item If $t<r$ then the element in (\ref{Action}) reads
$f_{t+1,j}f_{k,\ell}+f_{t+1,\ell}f_{k,j}$, which is in (G2).
\end{itemize}
In the similar way one studies the cases $\alpha=\alpha_{k,t}$, $\alpha_{t,j}$ and $\alpha_{t,\ell}$.
\end{enumerate}
\end{proof}

\begin{corollary}\label{Cor:Ideal}
The defining ideal $I_\tau^a(\varpi_r)$ is generated by the following elements:
\begin{itemize}
\item[(G1')] $f_\alpha$ for $\alpha\in N(\tau)\setminus\Phi_r^+$;
\item[(G2')] $f_\alpha f_\beta+f_{\alpha\join\beta} f_{\alpha\meet\beta}$ for $\alpha,\beta\in N(\tau)\cap\Phi_r^+$ such that both $\alpha\join\beta$ and $\alpha\meet\beta$ exist.
\end{itemize}
\end{corollary}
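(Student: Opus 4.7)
The plan is to reduce Corollary \ref{Cor:Ideal} to Lemma \ref{Lem:DefIdeal} via a natural ring retraction. First, I would identify
\[
I_\tau^a(\varpi_r) = I^a(\varpi_r) \cap S(\mathfrak{n}_\tau^{-,a}),
\]
which is immediate from Proposition \ref{Prop:DegSchubert}: the surjection $S(\mathfrak{n}_\tau^{-,a}) \twoheadrightarrow V_\tau^{-,a}(\varpi_r)$ is the restriction of $S(\mathfrak{n}^{-,a}) \twoheadrightarrow V^a(\varpi_r)$ along the inclusion $V_\tau^{-,a}(\varpi_r) \subseteq V^a(\varpi_r)$, so the two kernels coincide.

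Next I would introduce the retraction $\pi \colon S(\mathfrak{n}^{-,a}) \to S(\mathfrak{n}_\tau^{-,a})$ defined on generators by $\pi(f_\alpha) = f_\alpha$ for $\alpha \in N(\tau)$ and $\pi(f_\alpha) = 0$ for $\alpha \in P(\tau) = \Phi^+ \setminus N(\tau)$. This is a well-defined ring homomorphism because $S(\mathfrak{n}^{-,a})$ is the polynomial algebra in the $f_\alpha$, and by construction it fixes $S(\mathfrak{n}_\tau^{-,a})$ pointwise.

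The technical core of the argument is to verify that $\pi$ carries each generator of $I^a(\varpi_r)$ from Lemma \ref{Lem:DefIdeal} either to $0$ or to a generator listed in (G1')--(G2'). For (G1) this is clear by definition of $\pi$. For a (G2) generator $f_\alpha f_\beta + f_{\alpha \join \beta} f_{\alpha \meet \beta}$ with $\alpha,\beta \in \Phi_r^+$ I would distinguish two cases. If both $\alpha, \beta \in N(\tau)$, then $\supp(\alpha) \cup \supp(\beta)$ is connected (both contain $r$), so axiom (R1) for the rectangular set $N(\tau)$ forces $\alpha \join \beta \in N(\tau)$ and $\alpha \meet \beta \in N(\tau)$; hence $\pi$ fixes the generator, producing a (G2') element verbatim. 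If at least one of $\alpha, \beta$ lies outside $N(\tau)$, then $\pi(f_\alpha f_\beta) = 0$, and axiom (R2) precludes $\{\alpha \join \beta, \alpha \meet \beta\} \subseteq N(\tau)$ (which would force $\alpha, \beta \in N(\tau)$), so at least one of $f_{\alpha \join \beta}, f_{\alpha \meet \beta}$ lies in $\ker \pi$ and the whole expression is killed.

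Granted this case analysis, the conclusion follows: any $x \in I^a(\varpi_r) \cap S(\mathfrak{n}_\tau^{-,a})$ can be written as $x = \sum_i a_i g_i$ with $g_i$ of type (G1) or (G2) and $a_i \in S(\mathfrak{n}^{-,a})$; applying $\pi$ and using $\pi(x) = x$ yields $x = \sum_i \pi(a_i)\pi(g_i)$, which exhibits $x$ as an $S(\mathfrak{n}_\tau^{-,a})$-linear combination of (G1')--(G2') generators. The reverse inclusion is clear, with membership of $f_{\alpha \join \beta} f_{\alpha \meet \beta}$ in $S(\mathfrak{n}_\tau^{-,a})$ again guaranteed by (R1). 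The only nontrivial step — and the sole place where rectangularity is essential — is the case analysis for (G2), where both axioms (R1) and (R2) come into play; this compatibility between joins/meets in $\Phi^+$ and the set $N(\tau)$ is precisely what the notion of a rectangular subset was designed to encode.
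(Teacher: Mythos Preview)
Your proof is correct and follows the same route as the paper's: identify $I_\tau^a(\varpi_r) = I^a(\varpi_r) \cap S(\mathfrak{n}_\tau^{-,a})$ and then pass the generators of Lemma~\ref{Lem:DefIdeal} through to $S(\mathfrak{n}_\tau^{-,a})$. The paper's proof is a two-line sketch invoking ``the PBW theorem'' and the rectangularity of $N(\tau)$; your explicit retraction $\pi$ and the case analysis using (R1) and (R2) make precise what the paper leaves to the reader, and in particular clarify exactly where each axiom of rectangularity is needed.
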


\begin{proof}
Since $N(\tau)$ is a rectangular subset, it suffices to observe the obvious relation 
$$I_\tau^a(\varpi_r)=I^a(\varpi_r)\cap S(\mathfrak{n}_\tau^{-,a}),$$
then apply Lemma \ref{Lem:DefIdeal} and the PBW theorem.
\end{proof}

Let $r'$ be the index of the fundamental module for $\wt{G}$ as in Proposition \ref{proposition_finalResultFundamental} by taking $\lambda=\varpi_r$. The Demazure module $V_\ttau(\wt{\varpi}_{r'})$ is a cyclic $S(\wt{\mathfrak{n}}_\ttau^+)$-module with a cyclic vector $v_{\ttau(\wt{\varpi}_{r'})}$. Let $\wt{I}_\ttau(\wt{\varpi}_{r'})$ denote its defining ideal.

We consider the composition
$$\varphi_r:S(\mathfrak{n}_\tau^{-,a})\stackrel{\sim}{\longrightarrow} S(\wt{\mathfrak{n}}_\ttau^+)\longrightarrow S(\wt{\mathfrak{n}}_\ttau^+)/\wt{I}_\ttau(\wt{\varpi}_{r'})\simeq V_{\ttau}(\wt{\varpi}_{r'}).$$

\begin{lemma}\label{Lem:Fundamental}
The map $\varphi_r$ passes through $I_\tau^a(\varpi_r)$.
\end{lemma}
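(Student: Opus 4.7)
The plan is to show, using Corollary \ref{Cor:Ideal}, that $\varphi_r$ kills the two families of generators (G1') and (G2') of $I_\tau^a(\varpi_r)$. Throughout, for $\alpha\in N(\tau)$ write $\talpha:=\mathcal{D}(\alpha)\in A$; the isomorphism $\Psi_\tau$ introduced before the lemma sends $f_\alpha$ (up to a non-zero scalar) to the root vector of $\wt{\mathfrak{n}}_\ttau^+$ attached to $\talpha$. The key scalar-product identity from Proposition \ref{proposition_finalResultFundamental} reads
\[
\pair{\ttau\wt{\varpi}_{r'}}{\talpha}=-\pair{\varpi_r}{\alpha}\qquad\text{for all }\alpha\in N(\tau).
\]

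For a generator of type (G1'), let $\alpha\in N(\tau)\setminus\Phi_r^+$. Then $\pair{\varpi_r}{\alpha}=0$, so $\pair{\ttau\wt{\varpi}_{r'}}{\talpha}=0$. Because $\wt{\varpi}_{r'}$ is a minuscule fundamental weight, the weights of $V(\wt{\varpi}_{r'})$ form a single Weyl orbit, so neither $\ttau\wt{\varpi}_{r'}+\talpha$ nor $\ttau\wt{\varpi}_{r'}-\talpha$ is a weight of $V(\wt{\varpi}_{r'})$. Thus $\Psi_\tau(f_\alpha)\cdot v_{\ttau(\wt{\varpi}_{r'})}=0$ by a pure weight count, and $\varphi_r(f_\alpha)=0$.

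For a generator of type (G2'), let $\alpha,\beta\in N(\tau)\cap\Phi_r^+$ with both $\alpha\meet\beta$ and $\alpha\join\beta$ existing. By Proposition \ref{proposition_DantiIsomorphism}, $\mathcal{D}$ is a poset anti-isomorphism swapping meet and join, hence $\mathcal{D}(\alpha\meet\beta)=\talpha\join\tbeta$ and $\mathcal{D}(\alpha\join\beta)=\talpha\meet\tbeta$. Consequently $\Psi_\tau$ sends
\[
f_\alpha f_\beta+f_{\alpha\join\beta}f_{\alpha\meet\beta}
\]
to the analogous Pl\"ucker-type quadratic element of $S(\wt{\mathfrak{n}}_\ttau^+)$ indexed by the four roots $\talpha,\tbeta,\talpha\join\tbeta,\talpha\meet\tbeta$ of $A$. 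We must prove that it acts as zero on $v_{\ttau(\wt{\varpi}_{r'})}$. Realising $V(\wt{\varpi}_{r'})\simeq\wedge^{r'}\bc^{2n}$ (using minusculity), the vector $v_{\ttau(\wt{\varpi}_{r'})}$ is a wedge of $r'$ standard basis vectors, each $\Psi_\tau(f_\gamma)$ acts as an elementary matrix substituting one index of this wedge, and the combination above is the classical Pl\"ucker relation on $2\times 2$ minors, hence zero. Equivalently, since $\ttau$ is itself irreducible rectangular by Corollary \ref{corollary_latticeIsomorphism}, one may apply Lemma \ref{Lem:DefIdeal} to $\wt{G}=\SL_{2n}$ with the fundamental weight $\wt{\varpi}_{r'}$ (where $V^a(\wt{\varpi}_{r'})=V(\wt{\varpi}_{r'})$) and then transport the resulting identity from $v_{\wt{\varpi}_{r'}}$ to $v_{\ttau(\wt{\varpi}_{r'})}$ by twisting with $\ttau$.

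The main obstacle is the bookkeeping of combinatorial data: Proposition \ref{proposition_finalResultFundamental} is what makes Step 1 a pure weight argument in a minuscule module, while Proposition \ref{proposition_DantiIsomorphism} is what guarantees that an (R1)/(R2)-type quadratic relation on the $\tau$-side is translated, via $\Psi_\tau$, into precisely the shape of a Pl\"ucker relation on the $\ttau$-side. Once these two identifications are set up, both vanishing statements reduce to well-known facts about minuscule fundamental representations of type $\typeA$.
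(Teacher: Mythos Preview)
Your proposal is correct and follows essentially the same route as the paper's proof: both handle the generators (G1') via the scalar-product identity of Corollary~\ref{corollary_finalResultCombinatorics}/Proposition~\ref{proposition_finalResultFundamental} together with minusculity of $V(\wt{\varpi}_{r'})$, and both handle (G2') by using that $\mathcal{D}$ (resp.\ $i_A\circ\mathcal{D}$) exchanges/preserves meet and join, reducing the quadratic relation to a Pl\"ucker-type identity in the minuscule module. The only cosmetic difference is that the paper explicitly conjugates by $\ttau$ to rewrite the action of the $\wt{e}$'s on $v_{\ttau(\wt{\varpi}_{r'})}$ as an action of $\wt{f}$'s on $v_{\wt{\varpi}_{r'}}$ via $\mathcal{D}_A=i_A\circ\mathcal{D}$, whereas you argue directly in the wedge-power model (and also mention the $\ttau$-twist as an alternative); these are equivalent.
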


\begin{proof}
We show that the generators in Corollary \ref{Cor:Ideal} are annihilated by $\varphi_r$.

Let $\alpha\in N(\tau)\setminus\Phi_r^+$. Then we have
$$\varphi_r(f_\alpha)=\Psi_r(f_\alpha)\cdot v_{\wt{\varpi}_{r'}}=\wt{e}_{\mathcal{D}(\alpha)}\cdot v_{\ttau(\wt{\varpi}_{r'})}.$$
By Corollary \ref{corollary_finalResultCombinatorics}, 
$$\pair{\ttau(\wt{\varpi}_{r'})}{\mathcal{D}(\alpha)}=\pair{\tau(\varpi_r)}{\alpha}=-\pair{\varpi_r}{\alpha}=0,$$
implying that $f_\alpha\in\ker\varphi_r$.

Let $\alpha,\beta\in N(\tau)\cap\Phi^+_r$. By Corollary \ref{corollary_latticeIsomorphism}, we have
\begin{eqnarray*}
\varphi_r(f_\alpha f_\beta+f_{\alpha\join\beta} f_{\alpha\meet\beta})&=&(\wt{e}_{\mathcal{D}(\alpha)}\wt{e}_{\mathcal{D}(\beta)}+\wt{e}_{\mathcal{D}(\alpha\join\beta)}\wt{e}_{\mathcal{D}(\alpha\meet\beta)})\cdot v_{\ttau(\wt{\varpi}_{r'})}\\
&=& \ttau((\wt{f}_{\mathcal{D}_A(\alpha)}\wt{f}_{\mathcal{D}_A(\beta)}+\wt{f}_{\mathcal{D}_A(\alpha)\join\mathcal{D}_A(\beta)}\wt{f}_{\mathcal{D}_A(\alpha)\meet\mathcal{D}_A(\beta)})\cdot v_{\wt{\varpi}_{r'}})
\end{eqnarray*}
where $\mathcal{D}_A:=i_A\circ \mathcal{D}$. Since the fundamental representation $\wt{V}(\varpi_{r'})$ is minuscule, all weight spaces have dimension one, and the above formula gives $0$.
\end{proof}

\begin{proposition}\label{Prop:Fundamental}
The map $\varphi_r$ is an isomorphism of $\n_\tau^{-,a}$-modules
$$
\varphi_r: V_\tau^{-,a}(\varpi_r) \stackrel{\sim}{\longrightarrow}  V_{\ttau}(\wt{\varpi}_{r'}).
$$
\end{proposition}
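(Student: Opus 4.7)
The plan is to deduce that $\varphi_r$ descends to an isomorphism from three ingredients that are already in place: the factorization through $I_\tau^a(\varpi_r)$ (Lemma \ref{Lem:Fundamental}), the abelianness of $\wt{\mathfrak{n}}_\ttau^+$, and the dimension equality from Proposition \ref{Prop:DimEst}.

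First, Lemma \ref{Lem:Fundamental} guarantees that $\varphi_r$ kills the defining ideal $I_\tau^a(\varpi_r)$, hence induces a well-defined map $\bar{\varphi}_r\colon V_\tau^{-,a}(\varpi_r)\longrightarrow V_\ttau(\wt{\varpi}_{r'})$. This map is equivariant for the isomorphism $\Psi_\tau\colon \mathfrak{n}_\tau^{-,a}\stackrel{\sim}{\longrightarrow}\wt{\mathfrak{n}}_\ttau^+$ simply because $\varphi_r$ is defined using $\Psi_\tau$ at the level of symmetric algebras; in particular it is an $\mathfrak{n}_\tau^{-,a}$-module map once the target is transported via $\Psi_\tau$.

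Second, surjectivity. Here the key point is the remark following Corollary \ref{corollary_latticeIsomorphism}: since $i_A\circ\mathcal{D}$ embeds $\wt{N}(\ttau)$ into $\nnabla$, and no sum of two elements of $\nnabla$ is a root (the argument at the start of the proof of Lemma \ref{lemma_nablaKey}), the Lie subalgebra $\wt{\mathfrak{n}}_\ttau^+$ is abelian. Therefore $U(\wt{\mathfrak{n}}_\ttau^+)=S(\wt{\mathfrak{n}}_\ttau^+)$, and the Demazure module
$$V_\ttau(\wt{\varpi}_{r'}) \;=\; U(\wt{\mathfrak{b}}^+)\cdot v_{\ttau(\wt{\varpi}_{r'})} \;=\; S(\wt{\mathfrak{n}}_\ttau^+)\cdot v_{\ttau(\wt{\varpi}_{r'})}$$
is cyclic over $S(\wt{\mathfrak{n}}_\ttau^+)$. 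Since $\bar\varphi_r$ sends the cyclic generator $v_{\varpi_r}^a\in V_\tau^{-,a}(\varpi_r)$ to $v_{\ttau(\wt{\varpi}_{r'})}$ and is $\Psi_\tau$-equivariant, its image contains a cyclic generator and hence all of $V_\ttau(\wt{\varpi}_{r'})$.

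Third, injectivity follows from equality of dimensions. By general properties of PBW filtrations one has $\dim V_\tau^{-,a}(\varpi_r)=\dim V_\tau^-(\varpi_r)=\dim V_\tau(\varpi_r)$, and by Proposition \ref{Prop:DimEst} this equals $\dim V_\ttau(\wt{\varpi}_{r'})$; a surjection between finite-dimensional spaces of the same dimension is an isomorphism. The main (and essentially only) conceptual obstacle is the commutativity of $\wt{\mathfrak{n}}_\ttau^+$, without which one could not cyclically generate the Demazure module over the symmetric algebra and obtain surjectivity; everything else is an assembly of previously established combinatorial statements (Corollary \ref{corollary_latticeIsomorphism}, Proposition \ref{proposition_finalResultFundamental}, and Proposition \ref{Prop:DimEst}).
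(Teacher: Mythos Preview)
Your proof is correct and follows the paper's approach exactly: Lemma \ref{Lem:Fundamental} gives a well-defined surjection, and Proposition \ref{Prop:DimEst} then forces it to be an isomorphism. One small correction to your surjectivity paragraph: the map $i_A\circ\mathcal{D}$ goes from $N(\tau)$ onto $\wt{N}(\ttau)$, and $\wt{N}(\ttau)$ does \emph{not} lie in $\nnabla$ (see Example \ref{Ex:running5}); the abelianness of $\wt{\mathfrak{n}}_\ttau^+$ is rather because its root set is $\wt{N}(\ttau^{-1})=A\subseteq\nnabla$ (Proposition \ref{proposition_NtauInverse}), which the paper already records before defining $\varphi_r$---so surjectivity is in fact built into the definition of $\varphi_r$ as a composition of an isomorphism with a quotient map.
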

\begin{proof}
Combine Lemma \ref{Lem:Fundamental} and Proposition \ref{Prop:DimEst}.
\end{proof}

\subsection{General case}\label{Sec:GeneralCase}

We are now ready for the general case. The idea of the proof is similar to \cite[Theorem 4.1]{CLL}, but simpler for the following two reasons:
\begin{enumerate}
\item One of the goal of \cite{CLL} is to recover the defining ideal of the PBW-degenerate modules, in our case we assume this knowledge.
\item The dimension estimation (\ref{Prop:DimEst}) is established.
\end{enumerate}

\begin{proposition}\label{Prop:IsoGeneral}
Let $\lambda\in\Lambda^+$ and $\tau\in W$ be an irreducible rectangular element. There exists an isomorphism
\[
\varphi_\tau: V_\tau^{-,a}(\lambda) \stackrel{\sim}{\longrightarrow}  V_{\widetilde{\tau}}(\widetilde{\lambda}).
\]
of $\mathfrak{n}_\tau^{-,a}$-modules and hence of $N_\tau^{-,a}$-modules.
\end{proposition}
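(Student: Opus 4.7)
The plan is to reduce to the fundamental weight case established in Proposition~\ref{Prop:Fundamental}. Write $\lambda=\sum_{r=1}^n m_r\varpi_r$ and, in accordance with Proposition~\ref{proposition_finalResultFundamental}, $\tlambda=\sum_{r=1}^n m_r\wt{\varpi}_{r'}$, where $r\mapsto r'$ is the assignment provided there. My strategy is to realize both $V^{-,a}_\tau(\lambda)$ and $V_{\ttau}(\tlambda)$ as cyclic submodules of tensor products of the corresponding fundamental modules, transport the fundamental isomorphisms $\varphi_r$ through these tensor products, and finally invoke Proposition~\ref{Prop:DimEst} to promote the resulting surjection to an isomorphism.

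Concretely, set
\[
T^a := \bigotimes_{r=1}^n \bigl(V^{-,a}_\tau(\varpi_r)\bigr)^{\otimes m_r}, \qquad T := \bigotimes_{r=1}^n \bigl(V_{\ttau}(\wt{\varpi}_{r'})\bigr)^{\otimes m_r},
\]
viewed as modules over $S(\mathfrak{n}_\tau^{-,a})$ and $S(\wt{\mathfrak{n}}_\ttau^+)$ respectively via the standard coproduct (recall that $\wt{\mathfrak{n}}_\ttau^+$ is abelian by the remark after Corollary~\ref{corollary_latticeIsomorphism}). Since $\Psi_\tau$ is an isomorphism of abelian Lie algebras, the tensor product of the fundamental isomorphisms from Proposition~\ref{Prop:Fundamental} assembles into an isomorphism
\[
\Phi := \bigotimes_r \varphi_r^{\otimes m_r} : T^a \stackrel{\sim}{\longrightarrow} T
\]
that intertwines the two actions. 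Let $w^a := \bigotimes_r (v_{\varpi_r}^a)^{\otimes m_r} \in T^a$ and $\wt w := \bigotimes_r v_{\ttau\wt{\varpi}_{r'}}^{\otimes m_r} \in T$; by construction of the $\varphi_r$, the element $\Phi(w^a)$ is a non-zero scalar multiple of $\wt w$.

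On the $\SL_{n+1}$ side, the FFL defining relations of $V^a(\lambda)$ in $S(\mathfrak{n}^{-,a})$ annihilate $w^a$: the Leibniz rule for the coproduct reduces each such relation $f_\alpha^{\pair{\lambda}{\alpha}+1}$ (and its $U(\mathfrak{n}^+)$-derivatives) to multi-indexed sums in which at least one factor must exceed its fundamental bound and hence kill its tensor entry. Intersecting with $S(\mathfrak{n}_\tau^{-,a})$, the same conclusion holds for the defining ideal of $V^{-,a}_\tau(\lambda)$ in $S(\mathfrak{n}_\tau^{-,a})$, so the assignment $v_\lambda^a\mapsto w^a$ extends to a surjection $\pi^a : V^{-,a}_\tau(\lambda) \twoheadrightarrow S(\mathfrak{n}_\tau^{-,a})\cdot w^a$ of $\mathfrak{n}_\tau^{-,a}$-modules. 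On the $\SL_{2n}$ side, the classical tensor product property for Demazure modules in type~$\typeA$ (Joseph--Polo excellent filtrations) provides an embedding $\pi : V_{\ttau}(\tlambda) \hookrightarrow T$ with $\pi(v_{\ttau\tlambda})=\wt w$ and image exactly $S(\wt{\mathfrak{n}}_\ttau^+)\cdot \wt w$. Chaining these gives
\[
V^{-,a}_\tau(\lambda) \twoheadrightarrow S(\mathfrak{n}_\tau^{-,a})\cdot w^a \stackrel{\Phi}{\longrightarrow} S(\wt{\mathfrak{n}}_\ttau^+)\cdot \wt w \;\simeq\; V_{\ttau}(\tlambda),
\]
and Proposition~\ref{Prop:DimEst} forces the two ends to have equal dimension, so $\pi^a$ is an isomorphism and the composition is the desired $\varphi_\tau$. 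The main obstacle is the existence of the Demazure embedding $\pi$: that the cyclic $S(\wt{\mathfrak{n}}_\ttau^+)$-submodule of $T$ generated by $\wt w$ has dimension exactly $\dim V_{\ttau}(\tlambda)$. Once this classical structural input is in place, the rest of the argument is a routine transfer of the fundamental-weight isomorphism through the coproduct, finalized by dimension counting.
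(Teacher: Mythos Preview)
Your argument is essentially the same as the paper's: both reduce to tensor products of fundamental modules, transport the isomorphisms $\varphi_r$ of Proposition~\ref{Prop:Fundamental} through the tensor product, and conclude via the dimension equality of Proposition~\ref{Prop:DimEst}. The paper's justifications for the two auxiliary maps are slightly more economical than yours: the existence of $\pi$ (your $\pi^a$) is obtained from the compatibility of the PBW filtration with the Cartan embedding $V(\lambda)\hookrightarrow\bigotimes_r V(\varpi_r)^{\otimes m_r}$ together with the Minkowski property of \cite[Lemma~3]{Fou}, rather than by directly checking that the FFL relations and their $U(\mathfrak{n}^+)$--derivatives annihilate $w^a$; and the Demazure embedding you single out as the ``main obstacle'' follows at once by restricting the Cartan embedding $V(\tlambda)\hookrightarrow\bigotimes_r V(\wt\varpi_{r'})^{\otimes m_r}$ to the cyclic $U(\wt{\mathfrak{b}}^+)$--submodule generated by the extremal vector, so no appeal to excellent filtrations is needed.
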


\begin{proof}
Let $\lambda=a_1\varpi_1+\cdots+a_n\varpi_n$ for $a_1,\cdots,a_n\geq 0$. Then in Proposition \ref{proposition_nablaIdealWeight} we can choose $\wt{\lambda}=a_1\wt{\varpi}_{1'}+\cdots+a_n\wt{\varpi}_{n'}$, where for $k=1,\cdots,n$, the index $k'$ is obtained from Proposition \ref{proposition_finalResultFundamental} by taking $r=k$. 

We consider the following diagram:
\[
\xymatrix{
V_{\ttau}(\tlambda) \ar@{^(->}[r]  & 
V_{\ttau}(\wt{\varpi}_{1'})^{\ts a_1}\ts\cdots\ts V_{\ttau}(\wt{\varpi}_{n'})^{\ts a_n}
\\
V_{\tau}^{-,a}(\lambda) \ar[u]^-{\varphi_\tau} \ar[r]^-{\pi} &
V_{\tau}^{-,a}(\varpi_1)^{\ts a_1}\ts\cdots\ts V_{\tau}^{-,a}(\varpi_n)^{\ts a_n} \ar[u]^-{\sim},
}
\]
where 
\begin{itemize}
\item the upper horizontal arrow is the isomorphism onto the Cartan component of the tensor product; the existence of such an isomorphism follows by the definition of Demazure modules and the irreducibility over $\C$ of the highest weight $G$--modules;
\item the right vertical arrow is the tensor product of various $\varphi_\tau$'s in Proposition \ref{Prop:Fundamental};
\item the map $\pi$ is the projection onto the Cartan component, its existence follows by the compatibility between the PBW-filtration and the tensor product; it is a surjection by the Minkowski property \cite[Lemma 3]{Fou};
\item the left vertical map $\varphi_\tau$ is defined as the composition of the other three maps, making the diagram commutative. By construction it is a surjective $\mathfrak{n}_{\tau}^{-,a}$-module morphism.
\end{itemize}
Finally note that, by Proposition \ref{Prop:DimEst}, $V_\tau^{-,a}(\lambda)$ and $V_{\widetilde{\tau}}(\widetilde{\lambda})$ have the same dimension, hence the surjective map $\varphi_\tau$ is an isomorphism.
\end{proof}

\subsection{Proof of Theorem \ref{Thm:Schubert}}

We first consider the case where $\tau\in W$ is an irreducible rectangular element.

By Proposition \ref{Prop:IsoGeneral}, the isomorphism $\varphi_\tau$ induces 
$$X_\tau^a(\lambda)\subseteq \mathbb{P}(V_\tau^{-,a}(\lambda))\simeq\mathbb{P}(V_\ttau(\tlambda))\supseteq X_{\ttau}(\tlambda).$$

We look at the open dense parts $X_\ttau(\tlambda)^\circ:=\wt{B}\cdot v_{\ttau(\tlambda)}$ and $X_\tau^a(\lambda)^\circ:=N_\tau^{-,a}\cdot v_\lambda^a$ and show that they are isomorphic.

Again by Proposition \ref{Prop:IsoGeneral}, we have the following isomorphisms:

$$\wt{B}\cdot v_{\ttau(\tlambda)}\simeq \prod_{\gamma\in \wt{N}(\ttau^{-1})}\wt{U}_\gamma\cdot v_{\ttau(\tlambda)}\simeq\prod_{\beta\in N(\tau)}U_{-\beta}\cdot v_\lambda^a\simeq N_\tau^{-,a}\cdot v_\lambda^a.$$

The $\mathbb{G}^M$-equivariancy comes from the fact that both Lie algebras $\mathfrak{n}_\tau^{-,a}$ and $\wt{\mathfrak{n}}_\ttau^+$ are abelian.

Let $\tau\in W$ be a rectangular element. According to Proposition \ref{Prop:DecompRectangular}, there exists pairwise disjoint connected subsets $I_1,\cdots,I_t\subseteq [n]$ and $A_k=N(\tau)\cap\Phi_{I_k}^+$ such that $A_k$ is a rectangular subset in $\Phi_{I_k}^+$ and $N(\tau)=A_1\sqcup\cdots\sqcup A_t$. Then there exist $\tau_1,\cdots,\tau_t$ such that $\tau_k\in W_{I_k}$ is an irreducible rectangular element and $\tau=\tau_1\cdots\tau_t$ where $W_{I_k}$ is the Weyl group of $\Phi_{I_k}^+$. To terminate the proof of Theorem \ref{Thm:Schubert}, it suffices to apply the above irreducible rectangular case to each $\tau_1,\cdots,\tau_t$ and the sub-root systems separately.

\section{Consequences and examples}\label{sec-cons}

\begin{corollary}
Let $\tau\in W$ be a rectangular element and $\lambda\in\Lambda^+$. The degenerate Schubert variety $X_\tau^a(\lambda)$ is projectively normal, has rational singularities and is Frobenius split.
\end{corollary}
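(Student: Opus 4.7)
The plan is to deduce all three properties by transport of structure along the isomorphism provided by Theorem~\ref{Thm:Schubert}, since they are all classical properties of Schubert varieties in partial flag varieties of type $\typeA$.

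More precisely, let $\tau\in W$ be a rectangular element. By Proposition~\ref{Prop:DecompRectangular} we may decompose $\tau=\tau_1\cdots\tau_t$ with each $\tau_k$ an irreducible rectangular element in a parabolic subgroup of $W$, and the corresponding decomposition of $X_\tau^a(\lambda)$ reduces us, after passing to a product, to the irreducible case. There Theorem~\ref{Thm:Schubert} supplies an isomorphism of projective varieties
\[
X_\tau^a(\lambda)\;\cong\;X_{\widetilde{\tau}}(\widetilde{\lambda})
\]
onto a Schubert variety inside a partial flag variety $\widetilde{G}/P_{\widetilde{\lambda}}$ for $\widetilde{G}=\SL_{2n}$. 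Since the three properties in the statement are invariants of the isomorphism class of a polarized projective variety, it suffices to check them for $X_{\widetilde{\tau}}(\widetilde{\lambda})$.

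For the latter, I would quote the standard results on Schubert varieties due to Ramanathan and Mehta--Ramanathan: Schubert varieties in a partial flag variety $G/P$ are compatibly Frobenius split (in positive characteristic) inside $G/P$, and this implies that they are projectively normal with respect to any very ample line bundle restricted from $G/P$ and have rational singularities in characteristic zero via the standard reduction modulo $p$ argument. Since the paper works over $\C$, the Frobenius splitting assertion here should be interpreted after reduction modulo $p$ of a suitable integral model (as already mentioned in the introduction, the whole construction goes through over $\Z$), which produces a Frobenius splitting of the mod $p$ reduction of $X_\tau^a(\lambda)$; the isomorphism of Theorem~\ref{Thm:Schubert} being constructed combinatorially is compatible with such base change.

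The only genuinely delicate point is the ambient line bundle, because projective normality and the Frobenius splitting statement both depend on the chosen polarization. The plan is to observe that the isomorphism in Theorem~\ref{Thm:Schubert} has been established as an isomorphism of $\mathbb{G}_a^M$-equivariant projective varieties arising from the identification of cyclic modules $V_\tau^{-,a}(\lambda)\cong V_{\widetilde{\tau}}(\widetilde{\lambda})$, so it identifies the natural polarization on $X_\tau^a(\lambda)$ induced by $\lambda$ with the polarization on $X_{\widetilde{\tau}}(\widetilde{\lambda})$ induced by $\widetilde{\lambda}$. The latter is the restriction of an ample line bundle from $\widetilde{G}/P_{\widetilde{\lambda}}$ (since $\widetilde{\lambda}$ is dominant), and once this is noted, the cited results of Ramanathan and Mehta--Ramanathan apply directly and yield the conclusion.
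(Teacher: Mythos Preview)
Your proposal is correct and matches the paper's approach: the corollary is stated without proof in the paper precisely because it follows immediately from Theorem~\ref{Thm:Schubert} together with the classical results (Ramanathan, Mehta--Ramanathan) on Schubert varieties that you cite. Your additional remarks about the polarization being carried along by the module isomorphism of Proposition~\ref{Prop:IsoGeneral}, and about interpreting Frobenius splitting via reduction modulo $p$, are the right clarifications and go slightly beyond what the paper makes explicit.
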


\begin{example}\label{Ex:w0}
Let $\tau=w_0\in W$ be the longest element in the Weyl group, which is an irreducible rectangular element. It is easy to verify that the Weyl group element $\ttau\in \wt{W}$ coincides with the permutation $\sigma$ given in \cite{CL}. In this case, our Theorem \ref{Thm:Schubert} coincides with \cite[Theorem 1.2]{CL}.
\end{example}

\begin{remark}\label{Rmk:NonRect}
We consider $\g=\mathfrak{sl}_4$ and $s_1s_3s_2\in\mathfrak{S}_4$. Our construction provides the element $s_5s_4s_3s_5s_4\in\mathfrak{S}_6$.
It is easy to check, that for the first (and the third) fundamental weight of $\mathfrak{sl}_4$, there is no dominant integral weight $\tilde{\lambda}$ for $\mathfrak{sl}_6$ such that $V^{-,a}_{s_1s_3s_2}(\varpi_1)$ is isomorphic to the Demazure module $V_{s_5s_4s_3s_5s_4}(\widetilde{\lambda})$.
\end{remark}

\begin{remark}
Our approach is different to the one in \cite{BL}. There the authors study which ``classical'' Schubert variety in a flag variety of type $\typeA$ stays irreducible and/or is still a Schubert variety when degenerated following a certain degeneration of the flag variety to the degenerate flag variety for type $\typeA$. But this kind of degeneration of Schubert varieties is different from the degenerate Schubert varieties considered in this paper, for example our degenerate Schubert varieties are all irreducible by definition.
\end{remark}

\end{document}